\begin{document}

\newcommand{\suchthat}{\ | \ }
\newcommand{\myid}{1\hspace{-0.125cm}1}
\newcommand{\marked}{\mathbb{M}}
\newcommand{\punct}{\mathbb{P}}
\newcommand{\surf}{(\Sigma,\marked,\punct)}
\newcommand{\leafyG}{G^{\text{{\tiny \rotatebox[origin=r]{90}{$\multimap$}}}}}
\newcommand{\ribsurf}[1]{\Sigma(#1)}

\theoremstyle{plain}
    \newtheorem{theorem}{Theorem}[section]
    \newtheorem{lemma}[theorem]{Lemma}
    \newtheorem{proposition}[theorem]{Proposition}

\theoremstyle{definition}
    \newtheorem{defi}[theorem]{Definition}
    \newtheorem{ex}[theorem]{Example}
    \newtheorem{remark}[theorem]{Remark}

\theoremstyle{remark}
    \newtheorem{case}{Case}

\numberwithin{equation}{section}

\title[On the resolution of kinks of curves on punctured surfaces]{On the resolution of kinks of curves\\ on punctured surfaces}
\author{Christof Geiss}
\address{Christof Geiss\newline
Instituto de Matem\'aticas, UNAM, Mexico}
\email{christof.geiss@im.unam.mx}
\author{Daniel Labardini-Fragoso}
\address{Daniel Labardini-Fragoso\newline
Instituto de Matem\'aticas, UNAM, Mexico}
\email{labardini@im.unam.mx}

\subjclass{57K20, 13F60, 18B40}

\begin{abstract}    
Let $\surf$ be a surface with marked points $\marked\subseteq \partial\Sigma\neq\varnothing$ and punctures $\punct\subseteq\Sigma\setminus\partial\Sigma$. In this paper we show that for every curve $\gamma$ on $\Sigma\setminus\punct$, the curve obtained by resolving the kinks of $\gamma$ in any order is uniquely determined, up to homotopy in $\Sigma\setminus\punct$, by the $2$-orbifold homotopy class of $\gamma$, in which the punctures are interpreted to be orbifold points of order $2$. Our proof resorts to an application of the \emph{Diamond Lemma}.
\end{abstract}

\maketitle


\tableofcontents

\section{Introduction}

Surfaces with marked points are classical mathematical objects that, after the appearance of the works \cite{fock2007dual,fomin2008cluster} and \cite{assem2010gentle,labardini2009quivers}, have suffused both cluster algebras and the representation theory of algebras during the last 15 years, with remarkable connections between geometry and representation theory discovered in works like \cite{haiden2017flat,lekili2020derived,opper2018geometric}. Recent developments \cite{amiot2022derivedequivalences,amiot2021the,labardini2022derived} have discovered that, somewhat misteriously, it is sometimes necessary to interpret the punctures not as holes, but as orbifold points of order~$2$.

When a puncture is regarded as an orbifold point of order $2$, there are certain loops, originally of infinite order, that are declared to have order $2$, namely, each loop closely wrapping around such puncture. The \emph{$2$-orbifold fundamental groupoid} of a surface $\surf$ with marked points on the boundary $\marked\subseteq \partial\Sigma$ and punctures $\punct\subseteq\Sigma\setminus\partial\Sigma$, is thus defined as the quotient groupoid of the topological fundamental groupoid of $\surf$ obtained by treating all punctures as orbifold points of order $2$.

Intuitively, a \emph{kink} of a curve $\gamma$ on $\Sigma\setminus\punct$ is a segment $\kappa$ of $\gamma$ that turns out to be one of the aforementioned loops, and that cannot be dissolved with any homotopy rel $\{0,1\}$ of curves on $\Sigma\setminus\punct$, see Figure \ref{Fig:kinkOfCurve}.
        \begin{figure}[ht]
                \caption{}\label{Fig:kinkOfCurve}
                \centering
                \includegraphics[scale=.125]{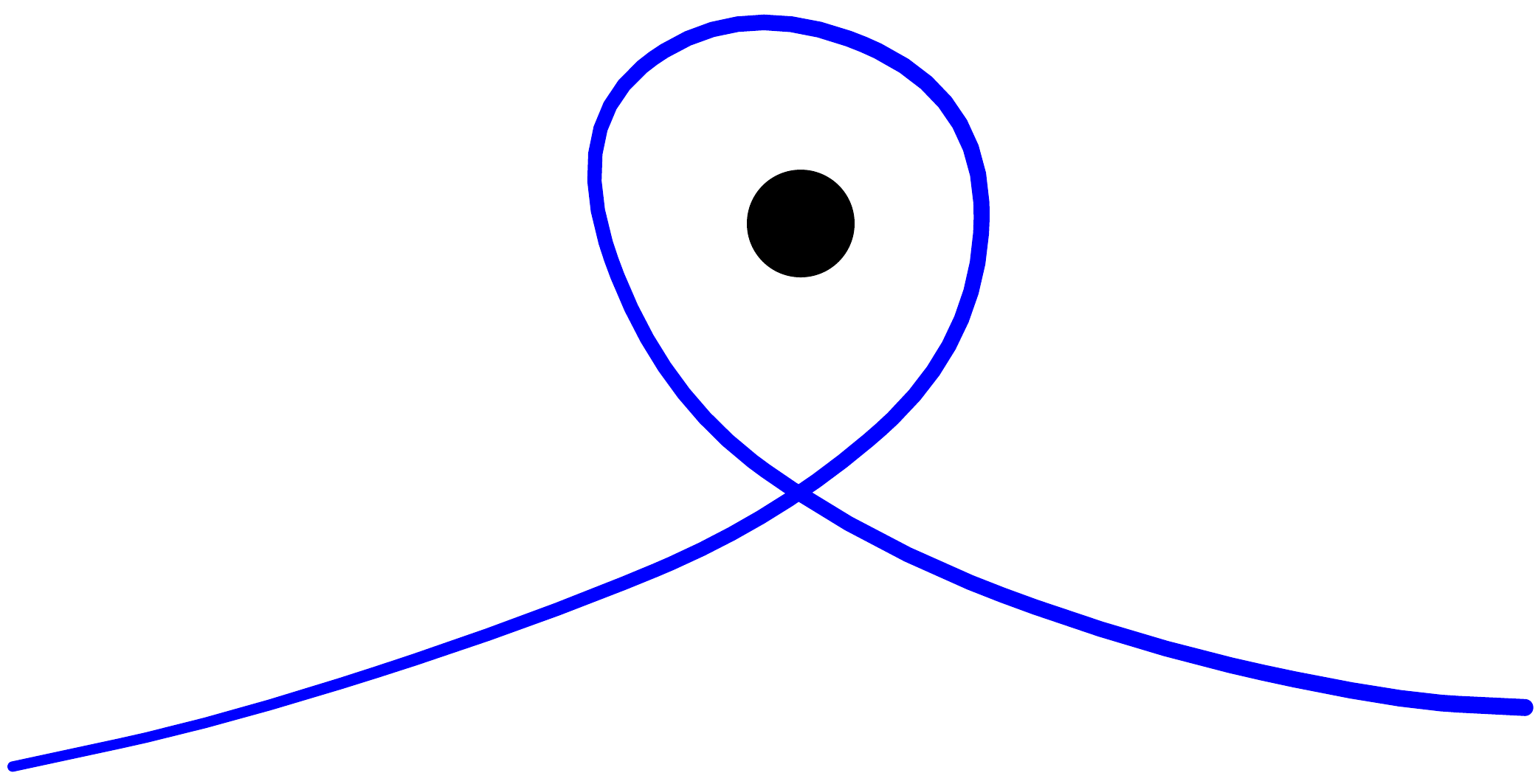}
        \end{figure}
However, the notion of \emph{kink} of an arbitrary morphism in the topological fundamental groupoid of $\surf$, that is, of an arbitrary homotopy class rel $\{0,1\}$ of curves connecting two given points on $\Sigma\setminus\punct$, is not as easy to define in terms of arbitrary representatives of the morphism as one would like, because an arbitrary representative can be a very complicated curve even if the morphism possesses some nice representative -- consider, for instance, a curve that has not only self-crossings and self-tangencies, but also different segments that it traverses multiple times, perhaps with immediate backtrackings of some of them.

In this paper, we define the notion of kink of a curve $\gamma$ in terms of the associated walk on the (leafy) dual graph of an arbitrary triangulation of signature zero. We show that $\gamma$ having kinks is independent of the triangulation of signature zero taken, and that the curve obtained by resolving the kinks of $\gamma$ in any order is uniquely determined, up to homotopy in $\Sigma\setminus\punct$, by the $2$-orbifold homotopy class of $\gamma$, in which the punctures are interpreted to be orbifold points of order $2$. Our proof resorts to a non-trivial application of the \emph{Diamond Lemma}.

Let us describe the contents of the paper in some detail. 
A \emph{triangulation of signature zero} is an ideal triangulation with the property that every puncture is enclosed by a self-folded triangle. Such a triangulation $\tau$ has its associated \emph{dual graph} $G(\tau)$, whose vertices are the triangles of $\tau$ and the boundary segments of $\surf$, with an edge connecting two triangles each time they share an arc of $\tau$, and with an edge between a boundary segment and the unique triangle containing it. To simplify the treatment of morphisms in the fundamental groupoid of the graph, we introduce the \emph{leafy dual graph} $\leafyG(\tau)$, which is a slight enlargement of $G(\tau)$. An important feature of $\leafyG(\tau)$ is that every non-identity morphism $f$ in the fundamental groupoid $\pi_1(\leafyG(\tau))$ can be uniquely represented as a backtrack-free walk on $\leafyG(\tau)$; we call such walk the \emph{standard form} of $f$.

The orientation of $\Sigma$ provides $G(\tau)$ with a natural structure of \emph{ribbon graph}, which we extend to a ribbon graph structure for $\leafyG(\tau)$. 
We use this ribbon structure of $\leafyG(\tau)$ to define the notion of \emph{kink} of any backtrack-free walk on $\leafyG(\tau)$, i.e., of any non-identity morphism in $\pi_1(\leafyG(\tau))$ written in standard form. We define the \emph{resolution} of a kink of a backtrack-free walk $w$ on $\leafyG(\tau)$ as the result of applying a purely combinatorial operation that mimics the topological replacement sketched in Figure \ref{Fig:kinkReplacement}
        \begin{figure}[ht]
                \caption{}\label{Fig:kinkReplacement}
                \centering
                \includegraphics[scale=.25]{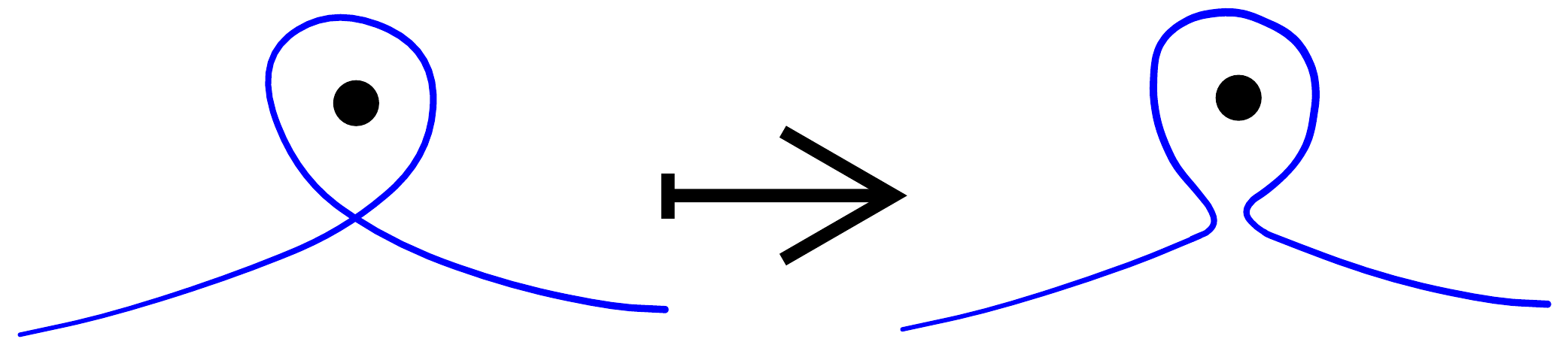}
        \end{figure}
and produces another backtrack-free walk $w'$ representing the same morphism as $w$ in the $2$-orbifold fundamental groupoid $\pi_1^{\operatorname{orb}}(\leafyG(\tau))$, but whose class in the fundamental groupoid $\pi_1(\leafyG(\tau))$ may differ from that of $w$. We prove that the number of kinks decreases every time the resolution of a kink is applied, and that if two distinct kinks of a backtrack-free walk are resolved, producing backtrack-free walks $w'$ and $w''$, then it is always possible to resolve some sequence of kinks of $w'$ and some sequence of kinks $w''$, so that the resulting backtrack-free walks coincide. This enables us to apply the \emph{Diamond Lemma} and deduce that, no matter the order in which the kinks of $w$ are resolved, one always arrives at the same kink-free backtrack-free walk without kinks after finitely many steps. 

To establish the corresponding result for curves on $\Sigma\setminus\punct$ we proceed as follows.
The \emph{ribbon surface} $\Sigma(\leafyG(\tau))$ can be naturally embedded in $\Sigma$. As usual, there are strong deformation retractions $\Sigma\setminus\punct \rightarrow\Sigma(\leafyG(\tau))\rightarrow \leafyG(\tau)$, whose composition we denote~$\rho$. We use these retractions to define the notion of \emph{kink of a curve $\gamma$ with respect to $\tau$} as a kink of the standard form of $\rho(\gamma)$. We show that for any two triangulations $\tau$ and $\sigma$ of signature zero, $\gamma$ has a kink with respect to $\tau$ if and only if $\gamma$ has a kink with respect to $\sigma$. Furthermore, the constructions and definition make it transparent that $\rho$ induces a commutative diagram of groupoids
$$
\xymatrix{\pi_1(\Sigma\setminus\punct,E) \ar[d]_{\mathfrak{p}} \ar[r]^{\rho_{\#}}_{\cong} & \pi_1(\leafyG(\tau),E) \ar[d]^{\mathfrak{p}} \\
\pi_1^{\operatorname{orb}}(\Sigma\setminus\punct,E) \ar[r]_{\overline{\rho_{\#}}}^{\cong} & \pi_1^{\operatorname{orb}}(\leafyG(\tau),E)
}
$$
whose horizotal arrows are isomorphisms, where the vertical arrows are the canonical projections of quotient groupoids. From this, the fact that the resolution of kinks of a backtrack-free walk $w$ on $\leafyG(\tau)$ does not affect the $2$-orbifold homotopy class of $w$, and the uniqueness result from the last line of the previous paragraph, we deduce the main result of this paper, namely:

\begin{theorem}
    Let $\surf$ be a surface with non-empty boundary, and let $E\subseteq \partial\Sigma$ be a set containing exactly one point from the relative interior of each boundary segment of $\surf$.
    \begin{itemize}
    \item Given $u_0,v_0\in E$, there is exactly one function $$\iota:\pi_{1,\punct}^{\operatorname{orb}}(\Sigma,E)(u_0,v_0)\rightarrow\pi_1(\Sigma\setminus\punct,E)(u_0,v_0)$$ with the following properties:
    \begin{enumerate}
        \item $\mathfrak{p}\circ\iota=\myid$;
        \item for every $f\in \pi_{1,\punct}^{\operatorname{orb}}(\Sigma,E)(u_0,v_0)$ there exists a representative curve $\gamma\in \iota(f)$ that has no kinks.
    \end{enumerate}
    \item
    there is exactly one function $$\iota:\pi_{1,\punct}^{\operatorname{orb},\operatorname{free}}(\Sigma,E)/\sim \rightarrow\pi_1^{\operatorname{free}}(\Sigma\setminus\punct,E)/\sim$$ with the following properties:
    \begin{enumerate}
        \item $\mathfrak{p}\circ\iota=\myid$;
        \item for every $f\in \pi_{1,\punct}^{\operatorname{orb},\operatorname{free}}(\Sigma,E)/\sim$ there exists a representative curve $\gamma\in \iota(f)$ that has no kinks;
    \end{enumerate}
    where $\sim$ are the equivalence relations that identify each closed curve with its opposite orientation.
    \end{itemize}
\end{theorem}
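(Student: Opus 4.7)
The plan is to transfer the statement from the surface to the leafy dual graph $\leafyG(\tau)$ of a fixed triangulation $\tau$ of signature zero, via the isomorphisms $\rho_{\#}$ and $\overline{\rho_{\#}}$ of the commutative diagram displayed in the introduction. On the graph side each morphism in $\pi_1(\leafyG(\tau),E)$ has a unique backtrack-free standard form, and the Diamond Lemma argument summarized earlier produces, from any such walk $w$, a unique kink-free backtrack-free walk $w^{\circ}$ obtained by iterated kink resolutions. All the action will take place on the graph, and results will be pulled back via $\rho_{\#}^{-1}$.

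First I would construct $\iota$. Given $\bar f\in \pi_{1,\punct}^{\operatorname{orb}}(\Sigma,E)(u_0,v_0)$, choose any representative curve $\gamma$, take the standard form $w$ of $\rho(\gamma)$ on $\leafyG(\tau)$, and define $\iota(\bar f):=\rho_{\#}^{-1}([w^{\circ}])$, where $[w^{\circ}]$ is the class in $\pi_1(\leafyG(\tau),E)$ whose standard form is $w^{\circ}$. The main technical claim is that $w^{\circ}$ depends only on $\bar f$. To prove it I would show that the equivalence relation on backtrack-free walks generated by single-kink resolutions coincides with equality of their images in $\pi_1^{\operatorname{orb}}(\leafyG(\tau),E)$. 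One inclusion is recalled in the introduction: a resolution preserves the $2$-orbifold class. For the reverse inclusion, the kernel of $\mathfrak{p}:\pi_1(\leafyG(\tau),E)\to\pi_1^{\operatorname{orb}}(\leafyG(\tau),E)$ is the normal subgroupoid generated by squared simple loops around punctures, and inserting such a squared loop at any point of a backtrack-free walk, then backtrack-reducing, produces a walk whose standard form differs from the original by the presence of exactly one kink that resolves back to it. A zig-zag of such insertions together with the confluence provided by the Diamond Lemma then yields $w_1^{\circ}=w_2^{\circ}$ whenever $w_1$ and $w_2$ have the same orbifold class.

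Granted this, properties $(1)$ and $(2)$ are easy. Property $(1)$ follows because kink resolutions preserve the orbifold class, so $\mathfrak{p}(\iota(\bar f))=\bar f$. Property $(2)$ holds because any curve $\gamma$ in the class $\iota(\bar f)$ satisfies that the standard form of $\rho(\gamma)$ equals $w^{\circ}$, which by construction is kink-free; invoking the triangulation-independence of having kinks established earlier in the paper, $\gamma$ has no kinks. For uniqueness of $\iota$, suppose $\iota'$ also satisfies $(1)$ and $(2)$. For each $\bar f$ there is a kink-free representative $\gamma'\in\iota'(\bar f)$; the standard form of $\rho(\gamma')$ is then a kink-free backtrack-free walk whose orbifold class is $\overline{\rho_{\#}}(\bar f)$, and by the just-established bijection between orbifold classes and kink-free standard forms it must equal $w^{\circ}$, so $\iota'(\bar f)=\iota(\bar f)$.

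The second part of the theorem follows by the analogous argument applied to free homotopy classes, replacing backtrack-free walks by cyclically reduced closed backtrack-free walks and resolving kinks modulo cyclic rotation; the Diamond Lemma and the identification of orbifold-equivalence with kink-resolution-equivalence carry over, and the relation $\sim$ identifying a closed curve with its opposite orientation is respected because kink resolution commutes with orientation reversal. The main obstacle will be the kernel-generation argument in the second paragraph: one must understand precisely how inserting a squared puncture-loop into an arbitrary backtrack-free walk interacts with the backtrack-reduction that produces the new standard form, and verify that the resulting standard form differs from the original by a single kink that resolves back to it. This hinges on careful analysis of the ribbon structure of $\leafyG(\tau)$ near the vertex encoding the self-folded triangle around the puncture.
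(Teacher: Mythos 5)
Your overall strategy is the paper's: pass to $\leafyG(\tau)$ via $\rho_{\#}$ and $\overline{\rho_{\#}}$, define $\iota$ by fully resolving the kinks of the standard form, and get order-independence from the Diamond Lemma (Theorem \ref{thm:global-confluence-of-kink-resolutions}). The genuine gap is exactly the step you flag as ``the main obstacle'', and it is not merely a deferred computation: as stated, it is false, and the failure is what forces the paper to modify the statement. You claim that inserting a conjugated squared puncture-loop into a backtrack-free walk and reducing changes the standard form by exactly one kink that resolves back, and you conclude a bijection between orbifold classes and kink-free standard forms. Take $c$ a kink-free backtrack-free walk from $u_0$ to a self-folded triangle $v$ whose last edge is the third edge at $v$ (not $\eta_{v,1},\eta_{v,2}$), and set $w_1=c*(\eta_{v,1},\eta_{v,2})*c^{-1}$, $w_2=c*(\eta_{v,2},\eta_{v,1})*c^{-1}$. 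These are distinct in $\pi_1(\leafyG(\tau),E)(u_0,u_0)$, they are identified in the orbifold groupoid (they differ by $\gamma_{u_0,v,c}^{\mp 2}\in K(u_0)$), and \emph{both are kink-free}: by the mirror symmetry of $c\ldots c^{-1}$, for every segment satisfying conditions (a)--(c) of Definition \ref{def:what-is-a-kink-on-a-graph} the turns $\varepsilon_j$ and $\varepsilon_{m-1}$ are traversals of the same pair of edges in opposite order, hence have opposite signs, so condition \eqref{item:what-is-a-kink-on-a-graph-crucial-signs-multiplicity-1} never holds; and no run of four alternating $\eta_{v,*}$-edges occurs, so no kink of higher multiplicity exists either. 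So two distinct kink-free walks share an orbifold class, your ``bijection'' fails, and with it both well-definedness of your $\iota$ on these classes and your uniqueness argument. This is precisely why the paper's precise statement, Theorem \ref{thm:unique-kink-free-representatives}, removes the torsion classes $L(u_0)$ from the domain of the first bullet (the introduction's unrestricted phrasing, which you set out to prove, is the uncorrected one); the second bullet survives unrestricted only because $\sim$ identifies a closed curve with its reverse.

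Even away from these torsion classes, the reverse inclusion (orbifold equality implies kink-resolution equivalence) is the heart of the proof and remains unestablished in your sketch: when the inserted $c*(\eta_{v,1},\eta_{v,2},\eta_{v,1},\eta_{v,2})*c^{-1}$ cancels deeply into the ambient walk -- in particular when it unwinds an existing winding around the same puncture, or when only a single winding remains and the multiplicity-$1$ sign conditions come into play -- the ``exactly one kink'' assertion needs a genuine case analysis of how resolution interacts with backtrack-reduction. That analysis is the content of Lemma \ref{lemma:key-lemma-for-confluence-of-kink-resolutions}, which the paper's proof of uniqueness cites (together with the description of the generators of $K$ preceding Definition \ref{def:orb-fund-groupoid-of-leafyG}). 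So: right architecture, but the decisive lemma is missing, and in the form you state it it cannot be proved without first excising $L(u_0)$ from the statement.
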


The paper is organized as follows. In Section \ref{sec:background} we present the formal definition of \emph{quotient subgroupoid} of a groupoid by what we call a \emph{normal multilocular subgroup}. We also recall the definition of the fundamental groupoid of a graph $G$, and that, provided $G$ is loop-free, every non-identity morphism $f$ in the fundamental groupoid of $G$ can be uniquely represented as a walk on $G$ without backtrackings, we call this walk the \emph{standard form} of $f$. In Section \ref{sec:leafy-dual-graph} we recall the definition of the dual graph $G(\tau)$ of a triangulation $\tau$ of signature zero, and introduce the \emph{leafy dual graph} $\leafyG(\tau)$. In Section \ref{sec:kinks-on-graphs} we define the combinatorial notions of \emph{kink} of a morfism belonging to the fundamental groupoid $\pi_1(\leafyG(\tau))$, and of \emph{resolution} of a kink, and show that the resolution of kinks satisfies the confluence conditions that allow to apply the Diamond Lemma. In Section \ref{sec:kinks-of-curves} we introduce the notion of \emph{kink of a curve} with respect to a triangulation of signature zero, and show that the absence of kinks in a curve is independent of the triangulation with respect to which it is considered. In Section \ref{sec:equiv-of-fund-groupoids} we see that (2-orbifold) fundamental groupoid of $\surf$ is isomorphic to the (2-orbifold) fundamental groupoid of $\leafyG(\tau)$. In Section \ref{sec:main-result} we present our main result.

\section{Background}\label{sec:background} 

\subsection{Groupoids, normal multilocular subgroups, and quotient groupoids}

Recall that a \emph{groupoid} is a category in which every morphism is invertible. Thus, for every object $x$ of a groupoid $\Gamma$, the endomorphism set $\Gamma(x,x)$ is a group under composition.

\begin{defi}
A \emph{multilocular subgroup} of a groupoid $\Gamma$ is a collection $H=(H(x))_{x\in\operatorname{obj}(\Gamma)}$ of subgroups $H(x)\subseteq\Gamma(x,x)$. A \emph{normal multilocular subgroup} is a multilocular subgroup $H=(H(x))_{x\in\operatorname{obj}(\Gamma)}$ with the property that for every two objects $x,y$ of $\Gamma$, every $h\in H(x)$ and every morphism
$g\in\Gamma(x,y)$, we have $ghg^{-1}\in H(y)$.
\end{defi}

Suppose that $H$ is a normal multilocular subgroup of the groupoid $\Gamma$. For each pair of objects $x,y$ of $\Gamma$, let $\equiv_H\subseteq\Gamma(x,y)\times\Gamma(x,y)$ be the relation defined by the rule
$$
f\equiv_H g \Longleftrightarrow \ \text{there exists} \ h\in H(x) \ \text{such that} \ f=gh.
$$
A routine exercise shows that $\equiv_H$ is an equivalence relation. Moreover, if $x,y,z$ are objects of $\Gamma$, and $f_1,g_1\in \Gamma(x,y)$, $f_2,g_2\in\Gamma(y,z)$ are morphisms such that $f_1\equiv_H g_1$ and $f_2\equiv_H g_2$, then, taking $h_1\in H(x)$ and $h_2\in H(y)$ such that $f_1=g_1h_1$ and $f_2=g_2h_2$, we have
$$
f_2f_1 = g_2h_2g_1h_1=g_2g_1(g_1^{-1}h_2g_1)h_1,
$$
with $(g_1^{-1}h_2g_1)h_1\in H(x)$, which shows that $f_2f_1\equiv_H g_2g_1$ too, just as in the case of groups and normal subgroups.

\begin{defi}
Suppose $\Gamma$ is a groupoid and $H$ is a normal multilocular subgroup of $\Gamma$. The \emph{quotient groupoid} $\Gamma/H$ is the category having the same objects as $\Gamma$, with
\begin{align*}
(\Gamma/H)(x,y)&:=\Gamma(x,y)/\equiv_H && \text{for} \quad x,y\in\operatorname{obj}(\Gamma),\\ 
[f_2][f_1]&:=[f_2f_1] && \text{for} \ x,y,z\in\operatorname{obj}(\Gamma) \ \text{and} \ f_1\in\Gamma(x,y),f_2\in\Gamma(y,z).
\end{align*}
\end{defi}

It is obvious that $\Gamma/H$ is indeed a groupoid, and that the canonical projection $\mathfrak{p}:\Gamma\rightarrow\Gamma/H$ is a full covariant functor, essentially surjective. Furthermore, the first isomorphism theorem is satisfied as well:

\begin{theorem}
If $\Gamma_1,\Gamma_2$ are groupoids and $F:\Gamma_1\rightarrow\Gamma_2$ is a full covariant functor, then, setting $H:=\left(\ker(\Gamma_1(x,x)\overset{F}{\rightarrow}\Gamma_2(F(x),(F(x))))\right)_{x\in \operatorname{obj}\Gamma_1}$, there is a unique covariant functor $\overline{F}:\Gamma_1/H\rightarrow \Gamma_2$ such that the diagram
$$
\xymatrix{
\Gamma_1 \ar[r]^{F} \ar[d] & \Gamma_2\\
\Gamma_1/H \ar[ur]_{\overline{F}}
}
$$
commutes. The functor $\overline{F}$ is fully faithful, and if $F$ is bijective on objects (resp. essentially surjective), then $\overline{F}$ is an isomorphism of categories (resp. an equivalence of categories).
\end{theorem}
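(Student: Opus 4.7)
The plan is to mimic the classical first isomorphism theorem for groups, being careful with the groupoid structure. I would proceed in the following steps.

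First, I would verify that the collection $H$ defined in the statement really is a normal multilocular subgroup of $\Gamma_1$. For each object $x$, the set $H(x)$ is a subgroup of $\Gamma_1(x,x)$ because it is the kernel of the group homomorphism obtained by restricting $F$ to $\Gamma_1(x,x)$. For normality, given $h\in H(x)$ and any $g\in\Gamma_1(x,y)$, functoriality of $F$ gives $F(ghg^{-1})=F(g)F(h)F(g)^{-1}=F(g)\cdot 1_{F(x)}\cdot F(g)^{-1}=1_{F(y)}$, so $ghg^{-1}\in H(y)$.

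Next, I would construct $\overline{F}$: on objects set $\overline{F}(x):=F(x)$, and on morphisms set $\overline{F}([f]):=F(f)$. Well-definedness is immediate: if $f\equiv_H g$ with $f,g\in\Gamma_1(x,y)$, then $f=gh$ for some $h\in H(x)$, so $F(f)=F(g)F(h)=F(g)$. Functoriality of $\overline{F}$ is a direct consequence of the functoriality of $F$ combined with the definition of composition in $\Gamma_1/H$. The commutativity $\overline{F}\circ\mathfrak{p}=F$ is built into the definition, and uniqueness follows because $\mathfrak{p}$ is (as noted just before the statement) bijective on objects and full on morphisms, so any functor whose composition with $\mathfrak{p}$ equals $F$ is forced to agree with the formula above.

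For the remaining claims, I would argue as follows. Faithfulness of $\overline{F}$: if $F(f)=F(g)$ for $f,g\in\Gamma_1(x,y)$, then $F(g^{-1}f)=1_{F(x)}$, so $g^{-1}f\in H(x)$; writing $f=g(g^{-1}f)$ then yields $f\equiv_H g$, hence $[f]=[g]$. Fullness of $\overline{F}$: any morphism $g\in\Gamma_2(\overline{F}(x),\overline{F}(y))=\Gamma_2(F(x),F(y))$ lifts via fullness of $F$ to some $f\in\Gamma_1(x,y)$ with $F(f)=g$, giving $\overline{F}([f])=g$. Finally, a fully faithful functor that is bijective on objects is automatically an isomorphism of categories, and a fully faithful functor that is essentially surjective is, by standard category theory, an equivalence; since $\overline{F}$ agrees with $F$ on objects, the corresponding hypothesis on $F$ transfers to $\overline{F}$ immediately.

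There is no genuine obstacle in this proof, since the argument is a transparent translation of the classical first isomorphism theorem from groups to groupoids. The only point that requires mild attention is the uniqueness of $\overline{F}$, which rests on the fact that the canonical projection $\mathfrak{p}$ is surjective on both objects and morphisms.
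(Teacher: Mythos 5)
Your proof is correct and is exactly the standard argument the authors intend: the paper states this ``first isomorphism theorem'' for groupoids without proof, treating it as a routine transcription of the group-theoretic case, and your verification (kernels form a normal multilocular subgroup, $\overline{F}([f]):=F(f)$ is well defined and functorial, faithfulness via $g^{-1}f\in H(x)$, fullness inherited from $F$, and the standard facts about fully faithful functors that are bijective on objects resp.\ essentially surjective) fills in precisely those routine steps. No discrepancy with the paper's approach, since there is none to compare against beyond the evident one you followed.
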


It is straightforward to verify that if $\Gamma$ is a groupoid and $\mathcal{F}=\{H_\alpha\}_{\alpha\in I}$ is a non-empty class of normal multilocular subgroups of~$\Gamma$, then $\left(\bigcap_{\alpha\in I}H_\alpha(x)\right)_{x\in \operatorname{obj}(\Gamma)}$ is again a normal multilocular subgroup of~$\Gamma$. Hence any collection $S=(S(x))_{x\in\operatorname{obj}(\Gamma)}$ of subsets $S(x)\subseteq\Gamma(x,x)$ generates a normal multilocular subgroup $H$ of~$\Gamma$. Such $H$ has the property that for each $x\in\operatorname{obj}(\Gamma)$, the normal subgroup of $\Gamma(x,x)$ generated by $S(x)$ is contained in $H(x)$, but the containment may be proper if $H(x)$ is not a characteristic subgroup of $\Gamma(x,x)$. In any case, it is readily seen that $H(x)$ coincides with the subgroup of $\Gamma(x,x)$ generated by $\bigcup_{y\in \operatorname{obj}(\Gamma)}\{g^{-1}hg\suchthat h\in S(y), g\in \Gamma(x,y)\}$.

\subsection{The fundamental groupoid of a graph}

Let $G$ be a finite graph. As is common practice, whenever we want to think of $G$ as a topological space, we shall identify $G$ with a $1$-dimensional CW-complex having the vertices of $G$ as $0$-cells, and the edges of $G$ as $1$-cells.

Define a category $\pi_1(G)$ as follows. Its objects are the vertices of $G$. Given two objects $u,v$ of $\pi_1(G)$, we set
$$
\pi_1(G)(u,v) := P(u,v)/\simeq,
$$
where $P(u,v)$ is the set of all continuous curves from $u$ to $v$ in $G$ seen as a topological space, and $\simeq\subseteq P(u,v)\times P(u,v)$ is the equivalence relation of homotopy relative to extreme points. Composition in $\pi_1(G)$ is induced by concatenation of curves. It is well known, and easy to see, that $\pi_1(G)$ is a groupoid, the \emph{fundamental groupoid of $G$}.

\begin{defi}\label{def:backtrack-free-walks}
    Let $G$ be a loop-free graph.
    \begin{enumerate}
    \item A \emph{backtrack-free walk} on $G$ is a finite sequence $$f=(u_0,e_1,u_1,e_2,\ldots,e_{n-1},u_{n-1},e_n,u_n),$$ where $u_0,u_1,\ldots,u_n$ are vertices of $G$, and $e_1,\ldots,e_n$ are edges of $G$, such that
    \begin{itemize}
    \item for $l=1,\ldots,n$, the edge $e_l$ connects the vertices $u_l$ and $u_{l-1}$;
        \item for $l=1,\ldots,n$, the edges $e_l\neq e_{l-1}$.
    \end{itemize}
    \item A \emph{closed backtrack-free walk} is a backtrack-free walk $f=(u_0,e_1,\ldots,e_n,u_n)$ such that $u_0=u_n$ and $e_1\neq e_n$.
    \item We say that two closed backtrack-free walks $f=(u_0,e_1,u_1,\ldots,u_{n-1},e_n,u_n)$ and $g=(v_0,d_1,v_1,\ldots,v_{m-1},d_m,v_m)$ are \emph{rotationally equivalent}, and write $f\sim_{\operatorname{rot}}g$, if $n=m$ and for some index $k\in\{0,\ldots,n-1\}$ we have
    $$g=(u_k,e_{k+1},u_{k+1},\ldots,u_{n-1},e_n,u_0,e_{1},u_1,\ldots,u_{k-1},e_k,u_k).$$
    \end{enumerate}
\end{defi}

\begin{remark}\label{rem:rotational-equivalence} Let $G$ be a loop-free graph. Rotational equivalence is an equivalence relation on the set of closed backtrack-free walks. The equivalence class of such an $f$ will be denoted~$[f]_{\operatorname{rot}}$.
\end{remark}

Let $G$ be a loop-free graph.
A backtrack-free walk $(u_0,e_1,u_1,e_2,\ldots,e_{n-1},u_{n-1},e_n,u_n)$ determines a morphism from $u_0$ to $u_n$ in the fundamental groupoid $\pi_1(G)$ by concatenating the curves obtained by parameterizing each edge $e_l$ as a continuous curve from $u_{l-1}$ to $u_l$.
Similarly, each rotational equivalence class $C$ determines an element of the so-called \emph{free-homotopy fundamental group} $\pi_1^{\operatorname{free}}(G)$ by taking, for any representative $f\in C$ (thus $C=[f]_{\operatorname{rot}}$), the free-homotopy class of the closed curve $\mathbb{S}^1\rightarrow G$ defined by concatenating the curves obtained by parameterizing each edge $e_l$ as a continuous curve from $u_{l-1}$ to $u_l$.

The next result is a consequence of, e.g., \cite[Chapter~4]{may1999aconcise}.

\begin{theorem}\label{thm:morphisms-represented-by-walks-on-graphs}
    Suppose that $G$ is a loop-free graph.
    \begin{enumerate}
        \item For every pair of vertices $u_0$ and $v_0$ of~$G$, every morphism $f:u_0\rightarrow v_0$ in the fundamental groupoid $\pi_1(G)$ can be represented uniquely as a backtrack-free walk $(u_0,e_1,u_1,e_2,\ldots,e_{n-1},u_{n-1},e_n,v_0)$.
        \item Every free-homotopy class belonging to $\pi_1^{\operatorname{free}}(G)$ can be represented uniquely as a rotational equivalence class of closed backtrack-free walks on $G$.
    \end{enumerate} 
\end{theorem}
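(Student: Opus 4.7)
The plan is to treat (1) by a standard two-step argument: existence via reduction of an edge-path representative, and uniqueness via the universal cover. Given a morphism $f:u_0\to v_0$ represented by a continuous curve $\gamma:[0,1]\to G$, I would first invoke cellular approximation (or work directly, choosing $0=t_0<t_1<\cdots<t_n=1$ so that $\gamma(t_i)$ is a vertex and each restriction $\gamma|_{[t_{i-1},t_i]}$ either traverses a single edge or stays at one vertex) to replace $\gamma$ by a homotopic edge-path, i.e., a walk $w=(u_0,e_1,u_1,\ldots,e_n,v_0)$. If $w$ contains a backtracking, meaning $e_l=e_{l+1}$ for some $l$, I remove the pair $e_l,e_{l+1}$ by the obvious homotopy rel endpoints (the concatenation $e_l\cdot e_{l+1}^{-1}$ is null-homotopic as $G$ is loop-free and a single edge is contractible). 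Since each reduction decreases the edge count by two, the process terminates in a backtrack-free walk representing $f$.

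For uniqueness in (1), I would use the universal cover $p:\widetilde{G}\to G$. Since $G$ is a loop-free graph regarded as a $1$-dimensional CW complex, $\widetilde{G}$ is a simply connected graph, hence a tree. Fix once and for all a lift $\widetilde{u}_0\in p^{-1}(u_0)$. A morphism $f\in\pi_1(G)(u_0,v_0)$ is completely determined by the endpoint $\widetilde{v}_0\in p^{-1}(v_0)$ of the lift $\widetilde{\gamma}$ of any representative $\gamma$ starting at $\widetilde{u}_0$. Any backtrack-free walk on $G$ lifts starting at $\widetilde{u}_0$ to a backtrack-free walk on $\widetilde{G}$. In a tree there is a unique backtrack-free walk between any two vertices; this follows by an easy induction on the distance in the tree. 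Hence two backtrack-free walks representing the same morphism must have identical lifts from $\widetilde{u}_0$, and projecting back they coincide.

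For (2), I would argue that rotational equivalence of closed backtrack-free walks corresponds exactly to conjugacy in $\pi_1(G,u)$: rotating by $k$ places replaces a walk representing $\alpha$ by one representing $\beta\alpha\beta^{-1}$, where $\beta$ is the initial segment of length $k$. Free-homotopy classes of maps $\mathbb{S}^1\to G$ biject with conjugacy classes in $\pi_1(G)$ (pick any basepoint). For existence, start with any representative and apply (1) to reduce to a backtrack-free walk based at some vertex, then iteratively apply an ``outer reduction'' (remove $e_1$ and $e_n$ when $e_1=e_n$ and adjust the basepoint accordingly); this terminates and produces a closed backtrack-free walk in the sense of Definition \ref{def:backtrack-free-walks}(2). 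For uniqueness, $\pi_1(G)$ is a free group (as $G$ deformation retracts onto a wedge of circles), and by the classical theorem on conjugacy in free groups, cyclically reduced words are unique within a conjugacy class up to cyclic permutation; translating this back through the bijection in (1) and the correspondence between rotational equivalence and conjugacy yields the claim.

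The main obstacle I anticipate is the uniqueness part of (1): the existence steps are combinatorial bookkeeping, but uniqueness requires the tree structure of $\widetilde{G}$, which is the genuinely nontrivial input. Everything else, including the cyclic version in (2), reduces to that uniqueness together with standard facts about free groups and conjugacy. Since the paper explicitly attributes the result to \cite[Chapter 4]{may1999aconcise}, I would likely cite the existence of the universal cover and its tree structure, and the freeness of $\pi_1(G)$, as the essential ingredients borrowed from the reference.
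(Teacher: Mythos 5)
Your proposal is correct and coincides with the paper's approach in the only sense available: the paper gives no proof of this theorem, deferring it entirely to \cite[Chapter~4]{may1999aconcise}, and your argument (existence by homotoping to an edge path and cancelling backtracks, uniqueness via the universal cover being a tree, and the conjugacy-class/cyclically-reduced-word translation for the free-homotopy case) is exactly the standard machinery that citation stands for. The only loose ends are edge cases the paper itself later sidesteps (the contractible free-homotopy class, and a basepoint per component if $G$ is disconnected), and they do not affect the substance.
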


\begin{defi}\label{def:standard-form-of-a-morphism-in-a-graph-groupoid} Let $G$ is a loop-free graph, and let $f$ be either a morphism in $\pi_1(G)$ or a free-homotopy class belonging to $\pi_1^{\operatorname{free}}(G)$. The unique backtrack-free walk (resp. the unique rotational equivalence class of closed backtrack-free walks) representing $f$ in Theorem \ref{thm:morphisms-represented-by-walks-on-graphs} will be called the \emph{standard form} of $f$. Notation: $\phi(f)$.
\end{defi}

Thus, if $\phi(f)=(u_0,e_1,u_1,e_2,\ldots,e_{n-1},u_{n-1},e_n,v_0)$, then in the fundamental groupoid $\pi_1(G)$ we have the equality
$$
f=(u_0,e_1,u_1)*(u_1,e_2,u_2)*\ldots*(u_{n-2},e_{n-1},u_{n-1})*(u_{n-1},e_n,v_0),
$$
where $*$ is the operation induced by concatenation of paths (written from left to right, that is, opposite to the usual way of composing morphisms in a category, i.e., $\circ=*^{\operatorname{op}}$ for $\pi_1(G))$. In particular, $\myid_{u_0}=\phi(\myid_{u_0})=(u_0)$.

Notice that in the situation of Theorem \ref{thm:morphisms-represented-by-walks-on-graphs}, if $(u_0,e_1,\ldots,e_n,v_0)$ is the standard form of the non-identity morphism $f:u_0\rightarrow v_0$, then $(v_0,e_n,\ldots,e_1,u_0)$ is the standard form of the inverse $f^{-1}:v_0\rightarrow u_0$.

We will recur to  the following notational abuses. We will denote any given backtrack-free walk $(u_0,e_1,u_1,e_2,\ldots,e_{n-1},u_{n-1},e_n,u_n)$ of rotation class of backtrack-free walks $[(u_0,e_1,u_1,e_2,\ldots,e_{n-1},u_{n-1},e_n,u_n)]_{\operatorname{rot}}$ simply as $(u_0,e_1,e_2,\ldots,e_{n-1},e_n,u_n)$.

\begin{ex} The graph $G$ depicted in Figure \ref{Fig:exVerySmallGraph} does not have loops, so Theorem \ref{thm:morphisms-represented-by-walks-on-graphs} can be applied to it.
        \begin{figure}[!h]
                \caption{}\label{Fig:exVerySmallGraph}
                \centering
                \includegraphics[scale=.075]{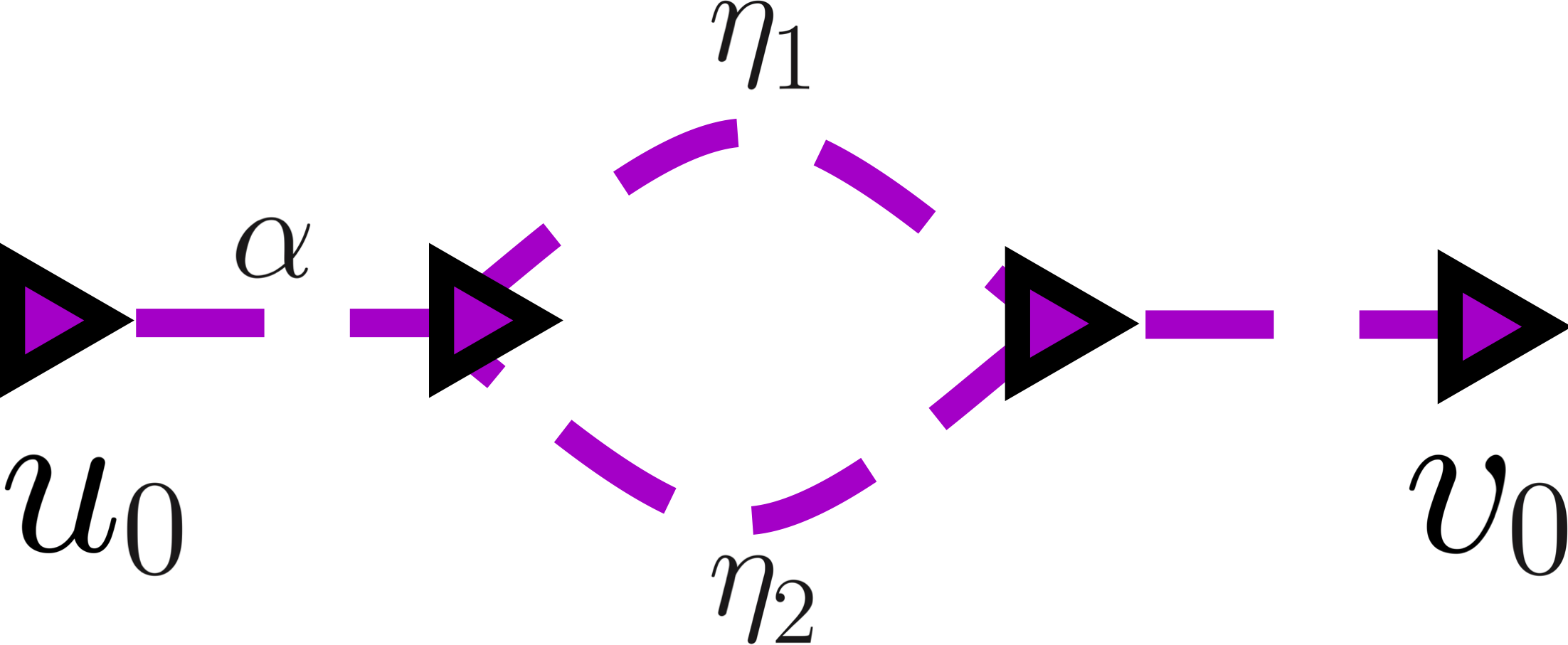}
        \end{figure}
The endomorphism group $\pi_1(G)(u_0,u_0)$ is infinite cyclic, generated by either $f=(u_0,\alpha,\eta_1,\eta_2,\alpha,u_0)$ or $f^{-1}=(u_0,\alpha,\eta_2,\eta_1,\alpha,u_0)$. For $n>0$, the standard forms of $f^n$ and $f^{-n}$ are, respectively, $$(u_0,\alpha,\underset{\operatorname{length}=2n}{\underbrace{\eta_1,\eta_2,\eta_1,\eta_2,\ldots,\eta_1,\eta_2}},\alpha,u_0) \quad \text{and} \quad (u_0,\alpha,\underset{\operatorname{length}=2n}{\underbrace{\eta_2,\eta_1,\eta_2,\eta_1,\ldots,\eta_2,\eta_1}},\alpha,u_0).$$
\end{ex}

\section{The (leafy) dual graph of a triangulation of signature zero}\label{sec:leafy-dual-graph}

The following notion is in sync with \cite[Definition 9.1 and \S9.2]{fomin2008cluster}.

\begin{defi}
Let $\surf$ be a (possibly punctured) surface with non-empty boundary.
An ideal triangulation $\tau$ of $\surf$ is said to have \emph{signature zero} if every puncture is enclosed by a self-folded triangle of $\tau$.
\end{defi}

For each ideal triangulation $\tau$ of signature zero, the \emph{dual graph} $G(\tau)$ is defined as follows. The vertices of $G(\tau)$ are the triangles of $\tau$ and the boundary segments of $\surf$. For each arc $k$ of $\tau$, we put an edge connecting the triangles that share $k$; and for each boundary segment $s$ of $\surf$, we put an edge connecting $s$ to the unique triangle of $\tau$ that contains it.
Notice that:
\begin{itemize}\item the self-folded triangles of $\tau$ are precisely the vertices incident to loops of the graph~$G(\tau)$;
\item every triangle of $\tau$ has valency $3$ as a vertex of $G(\tau)$, whereas each boundary segment has valency $1$;
\end{itemize} 

We turn $G(\tau)$ into a ribbon graph (or fat graph) in a natural way by letting the cyclic order on the edges incident to each vertex $v$ of $G(\tau)$ to be given by the clockwise sense around $v$, according to the orientation of $\Sigma$.

\begin{ex}\label{ex:triangulation-and-dual-graph} 
In Figure \ref{Fig:exDualGraph} we can see a triangulation $\tau$ of an annulus with one marked point on one boundary component, three on the other, and three punctures.
        \begin{figure}[ht]
                \caption{}\label{Fig:exDualGraph}
                \centering
                \includegraphics[scale=.5]{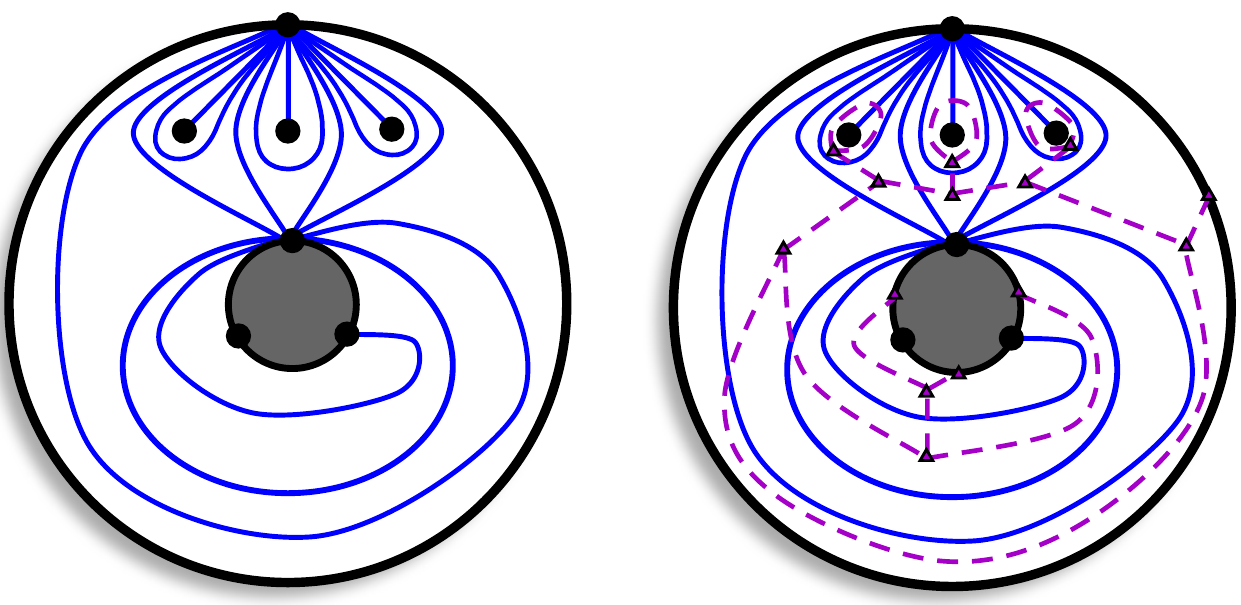}
        \end{figure}
We can also see the dual graph $G(\tau)$ drawn on the surface.
\end{ex}

\begin{defi}\label{def:leafy-dual-graph} Let $\surf$ be a (possibly punctured) surface with non-empty boundary, and $\tau$ a signature-zero ideal triangulation of $\surf$.
The \emph{leafy dual graph} of $\tau$ is the graph $\leafyG(\tau)$ obtained from $G(\tau)$ after applying the following combinatorial procedure. For each self-folded triangle $v$ of $\tau$,
\begin{enumerate}
\item split its corresponding loop $\eta_v$ into two distinct edges $\eta_{v,1}$, $\eta_{v,2}$, each connecting $v$ to a newly introduced vertex $w_v$;
\item introduce a leaf $\ell_v$ incident to $w_v$; call $z_v$ the vertex of $\ell_v$ distinct from $w_v$.
\end{enumerate}
We extend the ribbon graph structure of $G(\tau)$ to a ribbon graph structure of $\leafyG(\tau)$ as indicated in Figure \ref{Fig:ribbonStructureOffringeGtau}. 
 \begin{figure}[ht]
                \caption{}\label{Fig:ribbonStructureOffringeGtau}
                \centering
                \includegraphics[scale=.08]{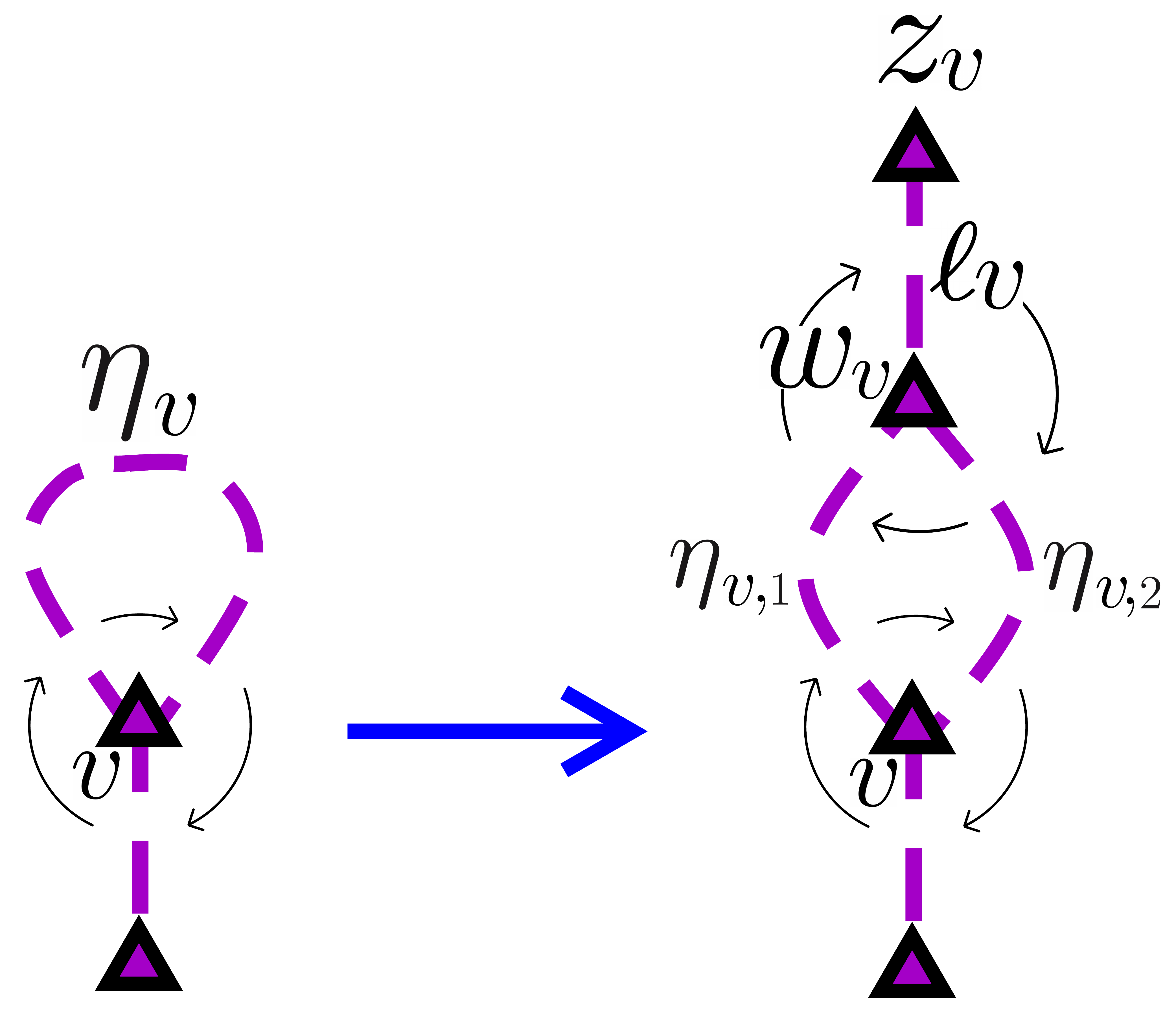}
        \end{figure}
\end{defi}

\begin{remark}\label{rem:special-vertices-of-fringeGtau}
We stress the fact that all the vertices of $G(\tau)$ are vertices of $\leafyG(\tau)$ as well, and that every boundary segment of $\surf$ is a vertex of $\tau$.
\end{remark}

Every edge of the leafy dual graph $\leafyG(\tau)$ is incident to a vertex of valency 3. Given a valency-$3$ vertex $u$ of $\leafyG(\tau)$ and an edge $e$ of $\leafyG(\tau)$ containing $u$, we use the ribbon graph structure of $\leafyG(\tau)$ to define
\begin{align}\label{eq:+turn-and--turn}
e^{+,u} &:= \text{the edge incident to $u$ which is preceded by $e$ around $u$};\\
\nonumber
e^{-,u} &:= \text{the edge incident to $u$ which is followed by $e$ around $u$}.
\end{align}

\begin{ex} Consider the triangulation $\tau$ from Example \ref{ex:triangulation-and-dual-graph}. On the left hand side of Figure \ref{Fig:exfringeGraph} we can see the ribbon graph $G(\tau)$.
        \begin{figure}[ht]
                \caption{}\label{Fig:exfringeGraph}
                \centering
                \includegraphics[scale=.55]{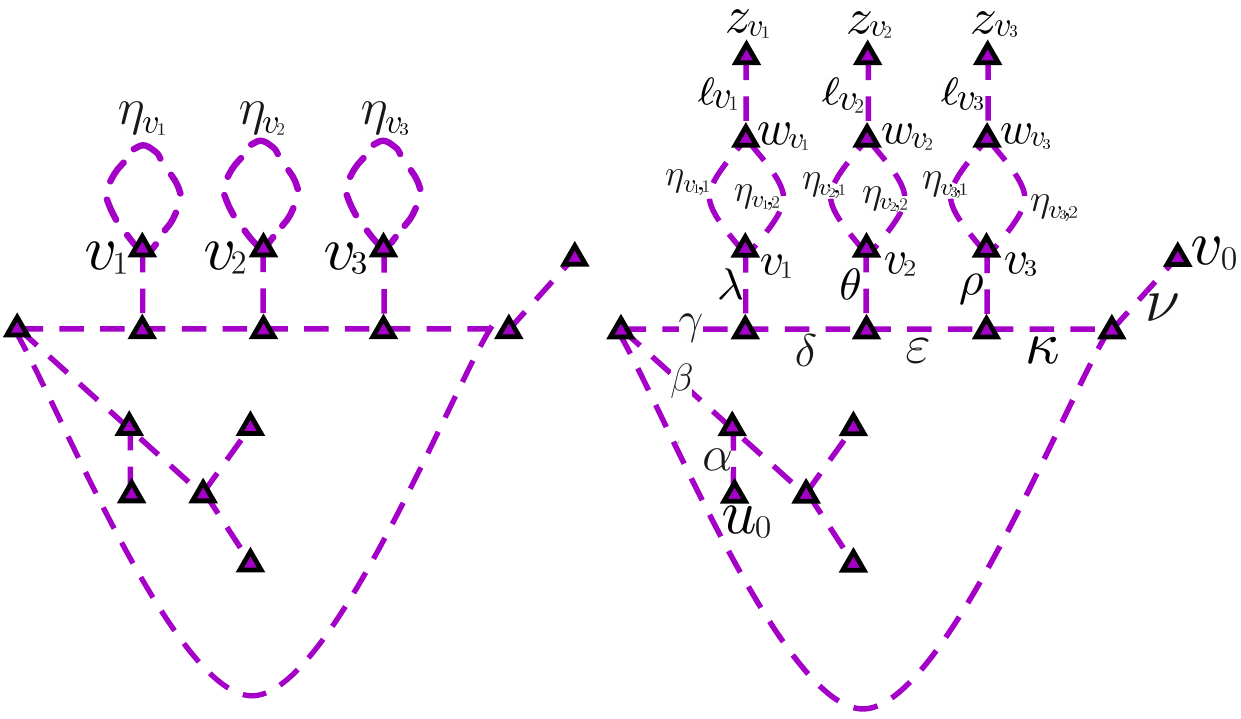}
        \end{figure}
On the right hand side of the figure, we can see the ribbon graph $\leafyG(\tau)$.
\end{ex}

Although the next result is quite standard, we briefly sketch its proof, essentially following \cite[\S2.3]{amiot2016the}, \cite[\S2.2]{amiot2016derived} and \cite[\S3.2]{amiot2021derived}. Along the way, we emphasize that the standard construction of the ribbon surface $\ribsurf{\leafyG(\tau)}$ associated to $\leafyG(\tau)$ allows to see it as the result of gluing very rigid hexagonal ribbons. This will be very useful later on, to fix a very specific class of curves for which the notion of `kink' is easy to define, and with the property that every morphism in the fundamental groupoid $\pi_1(\ribsurf{\leafyG(\tau)})$ is represented by at least one curve in the class.

\begin{theorem}\label{thm:strong-def-retraction-to-leafy-dual-graph}
    The leafy dual graph $\leafyG(\tau)$ is a strong deformation retract of $\Sigma\setminus\punct$. 
\end{theorem}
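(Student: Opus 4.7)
The strategy is to embed $\leafyG(\tau)$ explicitly in $\Sigma\setminus\punct$ compatibly with its combinatorial ribbon-graph structure, identify a regular neighborhood of this embedded graph with the abstract ribbon surface $\ribsurf{\leafyG(\tau)}$, and then collapse the complement. For each non-self-folded triangle $T$ of $\tau$ I would place the dual vertex $v_T$ in the interior of $T$ and its three incident edges as arcs crossing the three sides of $T$ transversally. For each boundary segment $s$ I would place $v_s$ at the marked point of $s$. For a self-folded triangle $T$ with enclosed puncture $p$ and radius $r$ from a vertex $q\in\partial T$ to $p$, I would place $v_T$ inside $T$ off $r$, draw $\eta_{v,1}$ and $\eta_{v,2}$ as simple arcs from $v_T$ to $w_v$ (placed on the side of $p$ opposite to $v_T$) going around $p$ on the two sides of $r$, and draw the leaf $\ell_v$ from $w_v$ to a point $z_v$ inside the disk bounded by $\eta_{v,1}\cup\eta_{v,2}$. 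One then verifies that the clockwise orientation of $\Sigma$ induces the combinatorial ribbon structure of $\leafyG(\tau)$ prescribed in Definition \ref{def:leafy-dual-graph}.

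Second, following \cite[\S2.3]{amiot2016the}, \cite[\S2.2]{amiot2016derived} and \cite[\S3.2]{amiot2021derived}, I would build $\ribsurf{\leafyG(\tau)}$ by gluing hexagonal ribbons along edge-strips: one hexagon centered at each valency-$3$ vertex $v_T$ or $w_v$, together with small half-disk caps around each valency-$1$ vertex $z_v$ or $v_s$. The standard retraction of each hexagon onto its center and each strip onto its core edge produces a strong deformation retraction $\ribsurf{\leafyG(\tau)}\to\leafyG(\tau)$. The embedding above realizes $\ribsurf{\leafyG(\tau)}$ as a closed regular neighborhood of $\leafyG(\tau)$ inside $\Sigma\setminus\punct$.

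Third, I would show that $\Sigma\setminus\punct$ strong-deformation-retracts onto this regular neighborhood. The complement decomposes into three types of open pieces: collars along pieces of $\partial\Sigma$ between consecutive marked points; small open wedges near the non-puncture interior vertices of $\tau$; and, for each puncture $p$, the open punctured disk inside the loop $\eta_{v,1}\cup\eta_{v,2}$ of the enclosing self-folded triangle (which still contains the leaf $\ell_v$). The collars and wedges retract straightforwardly onto boundary pieces of $\ribsurf{\leafyG(\tau)}$. For the punctured disk around each puncture, the inclusion of $(\eta_{v,1}\cup\eta_{v,2})\cup\ell_v$ into the punctured disk is a homotopy equivalence ($\pi_1\cong\mathbb{Z}$ on both sides, generated by a loop around $p$, with the leaf acting as a contractible whisker), which for this CW pair yields a strong deformation retraction. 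I expect the main technical point to be making this retraction near punctures explicit while fixing the leaf $\ell_v$ pointwise and ensuring it glues continuously with the retractions on adjacent pieces. Composing the resulting retraction $\Sigma\setminus\punct\to\ribsurf{\leafyG(\tau)}$ with the ribbon-surface retraction onto $\leafyG(\tau)$ yields the desired strong deformation retraction.
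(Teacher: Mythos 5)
Your proposal is correct and follows essentially the same route as the paper: build the ribbon surface $\ribsurf{\leafyG(\tau)}$ from glued hexagonal pieces (following the same references), embed it in $\Sigma\setminus\punct$ as a neighborhood of the embedded leafy dual graph, retract $\Sigma\setminus\punct$ onto it, and compose with the piecewise retraction of the ribbon surface onto $\leafyG(\tau)$; you merely make explicit the complement-collapsing step (boundary collars and the punctured disks around punctures) that the paper leaves as ``not hard to see.'' The only slip is cosmetic: since $\marked\subseteq\partial\Sigma$, the triangulation $\tau$ has no non-puncture interior vertices, so that piece of your decomposition is vacuous.
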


\begin{proof} 
    For each edge $e$ of $\leafyG(\tau)$, let $R_e$ be an open regular oriented Euclidean hexagon inscribed in the unit circle in the complex plane, and take an embedding of $e$ into $R_e$ as a Euclidean straight line segment joining a pair of radially opposite vertices. We fix such an $R_e$ and such an embedding of $e$ into $R_e$ once and for all. See Figure \ref{Fig:ribSurfAndRetraction} (left).
        \begin{figure}[ht]
                \caption{}\label{Fig:ribSurfAndRetraction}
                \centering
                \includegraphics[scale=.175]{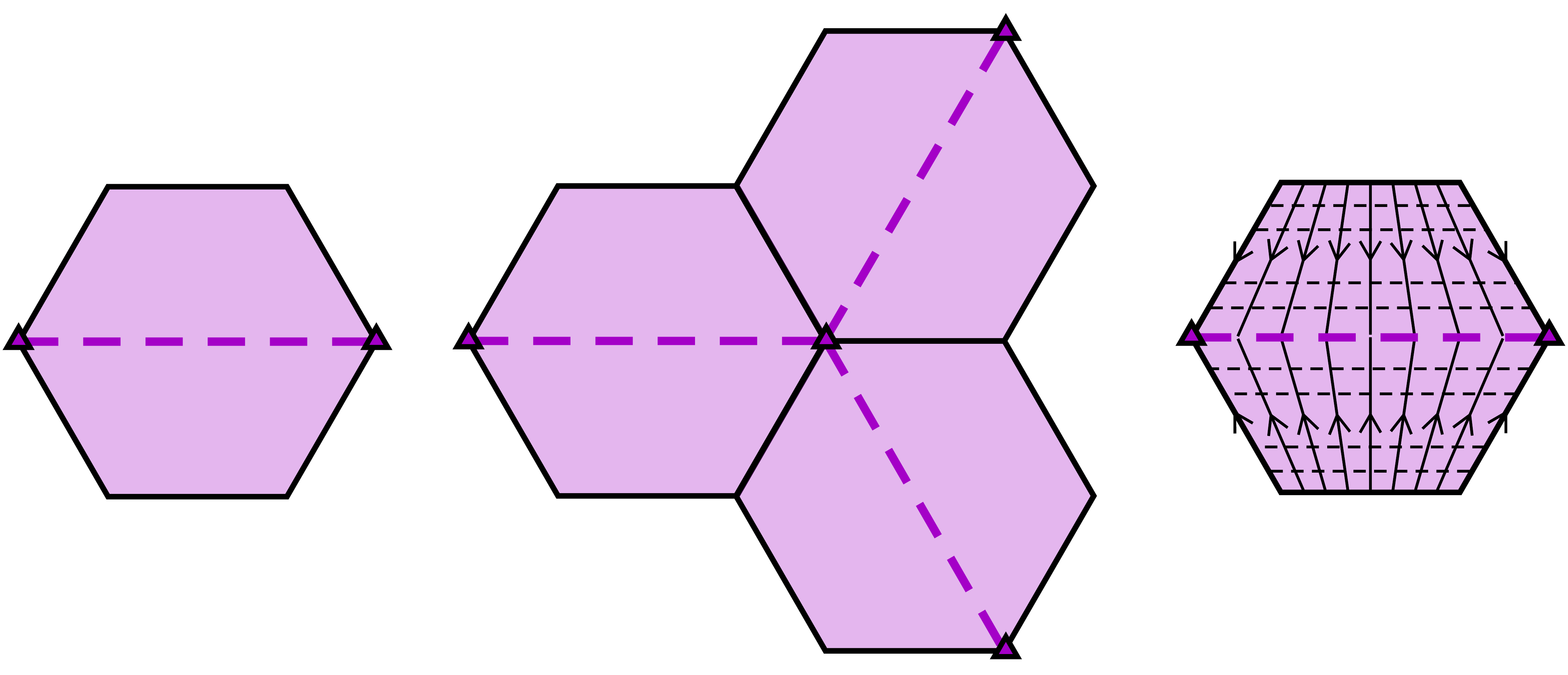}
        \end{figure}

    For each valency-$3$ vertex $u$ of $\leafyG(\tau)$, take an edge $e$ of $\leafyG(\tau)$ containing $u$, and glue the hexagons $R_e$, $R_{e^{+,u}}$ and $R_{e^{-,u}}$ as indicated in Figure \ref{Fig:ribSurfAndRetraction} (center), see also \eqref{eq:+turn-and--turn}. Denote by $\ribsurf{\leafyG(\tau)}$ the result of performing this gluing over all valency-$3$ vertices of $\leafyG(\tau)$. We shall refer to $\ribsurf{\leafyG(\tau)}$ as the \emph{ribbon surface} of $\leafyG(\tau)$. 
     We can take a natural open, piecewise-differentiable, continuous, injective function 
     \begin{equation}\label{eq:embedding-of-ribbon-surfaces}
     \iota_\tau:\Sigma(\leafyG(\tau))\rightarrow \Sigma\setminus(\punct\cup \partial\Sigma)
     \end{equation}
    that embeds $\Sigma(\leafyG(\tau))$ as an open subsurface of $\Sigma\setminus\punct$. Similarly to \cite[\S2.3]{amiot2016the}, \cite[\S2.2]{amiot2016derived} and \cite[\S3.2]{amiot2021derived},  
    is not hard to see that this embedding admits a strong deformation retraction $\Sigma\setminus\punct\rightarrow \ribsurf{\leafyG(\tau)}$.
    
    For each edge $e$ of $\leafyG(\tau)$, let $\varrho_e:R_e\rightarrow e$ be the piecewise-linear strong deformation retraction sketched in Figure \ref{Fig:ribSurfAndRetraction} (right). Define $\varrho:\ribsurf{\leafyG(\tau)}\rightarrow \leafyG(\tau)$ by setting $\varrho|_{R_e}:=\varrho_e$ for every edge $e$. Then $\varrho$ is a strong deformation retraction. Composing $\varrho$ with a strong deformation retraction $\Sigma\setminus\punct\rightarrow \ribsurf{\leafyG(\tau)}$ from the previous paragraph, we obtain a strong deformation retraction $\rho:\Sigma\setminus\punct\rightarrow\leafyG(\tau)$.
\end{proof}

\begin{ex}
For the ribbon graph $\leafyG(\tau)$ from Figure \ref{Fig:exfringeGraph}, the ribbon surface $\ribsurf{\leafyG(\tau)}$ can be visualized in Figure \ref{Fig:exRibbonSurface}.
        \begin{figure}[ht]
                \caption{}\label{Fig:exRibbonSurface}
                \centering
                \includegraphics[scale=.4]{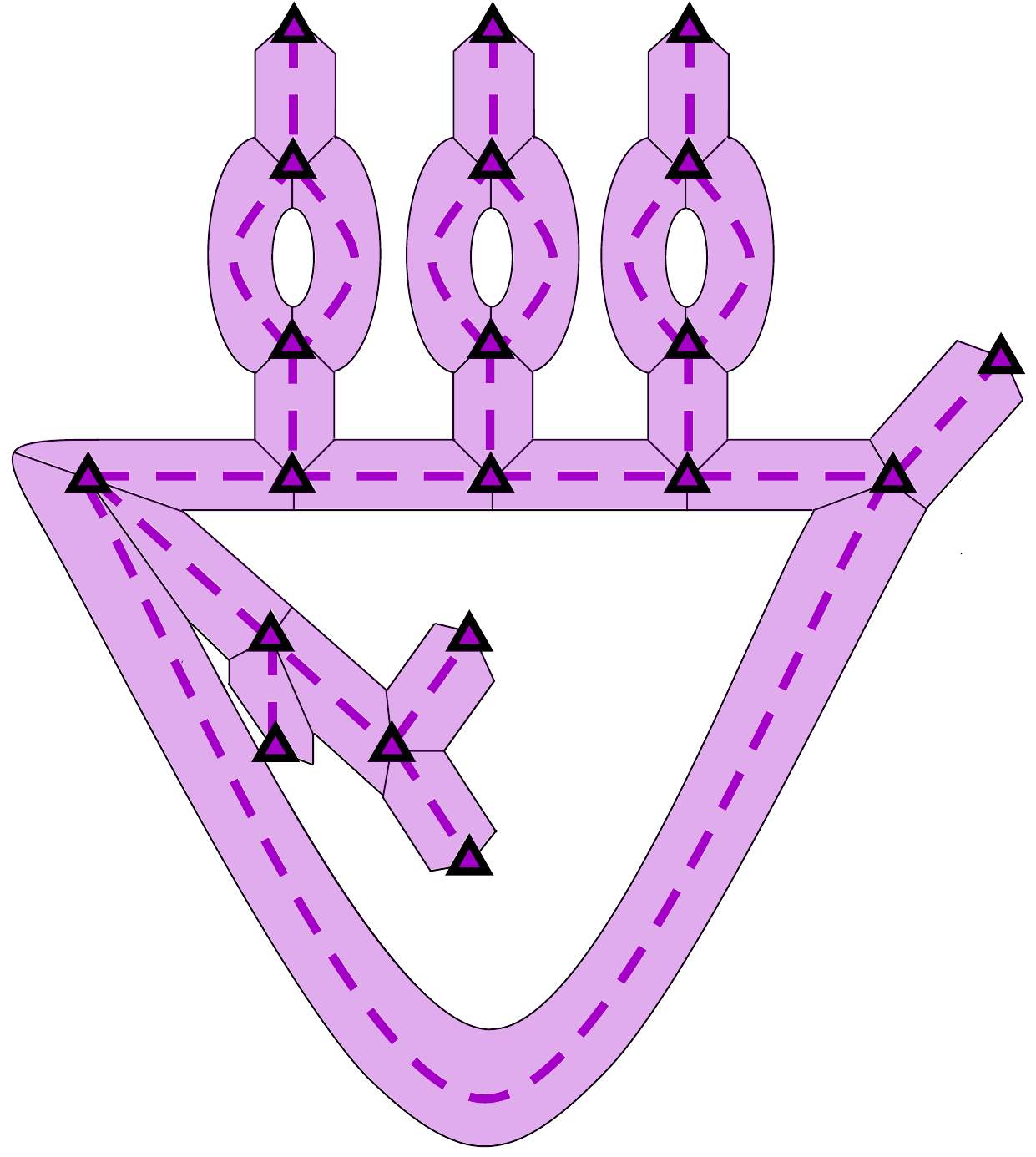}
        \end{figure}
Strictly speaking, the picture is incorrect, since the hexagonal ribbons $R_e$ are not drawn as regular hexagons congruent to each other. Drawing them as such would interfere with an intuitive visualization of $\ribsurf{\leafyG(\tau)}$.
\end{ex}

\section{Kinks of walks on graphs}\label{sec:kinks-on-graphs}

From this point to the end of the paper, we will work only with triangulations of signature zero.
By construction, no edge of $\leafyG(\tau)$ is a loop. Hence Theorem \ref{thm:morphisms-represented-by-walks-on-graphs} can be applied. Let $f$ be either
\begin{align}\label{eq:f-morfism-in-pi_1(leafyG(tau))}
&
\text{a non-identity morphism in $\pi_1(\leafyG(\tau))$ between vertices of $G(\tau)$ }\\
\nonumber &
\text{distinct from all the self-folded triangles of $\tau$,
}  \\
\nonumber & \text{or}\\
\label{eq:free-homotopy-class-in-pi_1(leafyG(tau))}
&\text{a non-contractible free-homotopy class belonging to $\pi_1^{\operatorname{free}}(\leafyG(\tau))$.} 
\end{align}
We define a finite sequence $\mathbf{s}(f)=(\varepsilon_1,\varepsilon_2,\ldots)$ of signs $\varepsilon_j\in\{+,-\}$ as follows. Let $\phi(f)=(u_0,e_1,\ldots,e_n,v_0)$ be the standard form of $f$. If $n=1$, we set $\mathbf{s}(f)$ to be the empty sequence. Else, for $j=1,\ldots,n-1$, set $\varepsilon_j$ to be the sign with the property that
$$
e_{j+1}=e_{j}^{\varepsilon_{j},u_j}.
$$
If $f$ belongs to $\pi_1^{\operatorname{free}}(\leafyG(\tau))$, we further set $\varepsilon_n$ to be the sign with the property that
$$
e_{1}=e_{n}^{\varepsilon_{n},u_n}.
$$
We define 
$$
\mathbf{s}(f):=\begin{cases}
(\varepsilon_1,\ldots,\varepsilon_{n-1}) & \text{if $f$ is a morphism in $\pi_1(G)$};\\
(\varepsilon_1,\ldots,\varepsilon_{n-1},\varepsilon_n) & \text{if $f$ belongs to $\pi_1^{\operatorname{free}}(\leafyG(\tau))$.}
\end{cases}
$$
Notice that if $f\in\pi_1^{\operatorname{free}}(\leafyG(\tau))$, then $\mathbf{s}(f)$ is defined up to a cyclic rotation of its entries.

\begin{ex}\label{ex:sign-sequences-on-leafy-graph} Consider the vertices $u_0,v_0$ of the graph $\leafyG(\tau)$ shown in Figure \ref{Fig:exfringeGraph}. 
For the following morphisms $f:u_0\rightarrow v_0$ (written in standard form) in $\pi_1(\leafyG(\tau))$, we can see the sign sequence $\mathbf{s}(f)$ on the right (the commas have been omitted for space reasons).
\begin{align*}
    f_1 &= (u_0\alpha\beta\gamma\delta\varepsilon\kappa\nu v_0) & \mathbf{s}(f_1) &= (+----+) \\
    f_2 &= (u_0\alpha\beta\gamma\lambda\eta_{v_1,1}\eta_{v_1,2}\lambda\delta\varepsilon\kappa\nu v_0) & \mathbf{s}(f_2) &= (+-++-++--+)\\
    f_3 &= (u_0\alpha\beta\gamma\lambda\eta_{v_1,1}\eta_{v_1,2}\eta_{v_1,1}\eta_{v_1,2}\lambda\delta\varepsilon\kappa\nu v_0) & \mathbf{s}(f_3) &= (+-++---++--+)\\
    f_4 &= (u_0\alpha\beta\gamma\lambda\eta_{v_1,2}\eta_{v_1,1}\lambda\delta\varepsilon\kappa\nu v_0) & \mathbf{s}(f_4) &= (+-+-+-+--+)
\end{align*}
\end{ex}

\begin{defi}\label{def:what-is-a-kink-on-a-graph} Let $\tau$ be a triangulation of signature zero, and $\leafyG(\tau)$ its leafy dual graph.
\begin{enumerate}
\item If $f$ is a morfism as in \eqref{eq:f-morfism-in-pi_1(leafyG(tau))}, then 
a \emph{kink} of $f$ is a segment $(e_{j},e_{j+1},\ldots,e_{m})$ of the standard form $\phi(f)=(u_0,e_1,\ldots,e_n,u_n)$, with the property that $r:=\frac{m-j-3}{2}$ is a positive integer and there exists a self-folded triangle $v$ of $\tau$, and distinct indices $i,k\in\{1,2\}$ such that either
\begin{enumerate}
    \item $e_{j+r+1}=\eta_{v,i}$ and $e_{j+r+2}=\eta_{v,k}$;
    \item for $t=1,\ldots,r$, we have $e_{j+t}=e_{m-t}$;
    \item $e_{j+r}=e_{j+r+3}\notin\{\eta_{v,k},\eta_{v,i}\}$; and
    \item\label{item:what-is-a-kink-on-a-graph-crucial-signs-multiplicity-1} $\varepsilon_{j}=\varepsilon_{j+r+1}=\varepsilon_{m-1}$;
\end{enumerate}
or
\begin{itemize}
    \item[(i)] $e_{j+1}=e_{j+3}=\ldots=e_{j+2r+1}=\eta_{v,i}$ and $e_{j+2}=e_{j+4}=\ldots=e_{j+2r+2}=\eta_{v,k}$;~and
    \item[(ii)] $e_{j}=e_{m}\notin \{\eta_{v,i},\eta_{v,k}\}$.
\end{itemize}
\item if $f$ is a free-homotopy class as in \eqref{eq:free-homotopy-class-in-pi_1(leafyG(tau))}, then a \emph{kink} of $f$ is a kink of any representative of the rotational equivalence class of the standard form $\phi(f)$.
\end{enumerate}
We say that a kink has \emph{multiplicity} $1$ or $r+1$ according to whether it satisfies the first or second set of conditions above. Furthermore, the \emph{core} of $\kappa$ is its segment $(e_{j+r+1},e_{j+r+2})$ if the multiplicity of $\kappa$ is $1$, and its segment $(e_{j+1},e_{j+2},\ldots,e_{j+2r+1},e_{j+2r+2})$ if the multiplicity of $\kappa$ is greater than $1$.
\end{defi}

\begin{ex} With reference to Example \ref{ex:sign-sequences-on-leafy-graph}, the morphisms $f_1$ and $f_2$ do not have kinks whatsoever. The morphisms $f_3$ and $f_4$ 
do have kinks (again, for space reasons we omit the commas):
\begin{align*}
    f_3 &= (u_0\alpha\beta\gamma\underset{\operatorname{mult}=2,j=4,m=9,r=1}{\underbrace{\lambda\eta_{v_1,1}\eta_{v_1,2}\eta_{v_1,1}\eta_{v_1,2}\lambda}}\delta\varepsilon\kappa\nu v_0) & \mathbf{s}(f_3) &= (+-++---++--+)\\
    f_4 &= (u_0\alpha\beta\underset{\operatorname{mult}=1,j=3,m=8,r=1}{\underbrace{\gamma\lambda\eta_{v_1,2}\eta_{v_1,1}\lambda\delta}}\varepsilon\kappa\nu v_0) & \mathbf{s}(f_4) &= (+-\boxed{+}-\boxed{+}-\boxed{+}--+) 
\end{align*}
For the morphism $f_4$, the signs $\varepsilon_{j}=\varepsilon_{j+r+1}=\varepsilon_{m-1}$ from Definition \ref{def:what-is-a-kink-on-a-graph}-\eqref{item:what-is-a-kink-on-a-graph-crucial-signs-multiplicity-1} appear enclosed in squares.
\end{ex}

\begin{defi}\label{def:partial-resolution-of-kink}
Let $\tau$ be a triangulation of signature zero, and $\leafyG(\tau)$ its leafy dual graph. 
\begin{enumerate}
\item 
Suppose that $f$ is a morphism as in \eqref{eq:f-morfism-in-pi_1(leafyG(tau))}, and that $\kappa=(e_{j},e_{j+1},\ldots,e_{m})$ is a kink of $f$, and set $r:=\frac{m-j+3}{2}$ as in Definition \ref{def:what-is-a-kink-on-a-graph}. A \emph{partial resolution} of $\kappa$ in $f$ is the sequence $\widetilde{\rho}_{\kappa}(f)$ obtained after applying to $f$ one of the following two combinatorial operations:
\begin{itemize}
\item If $\kappa$ has multiplicity $1$, switch the $(j+r+1)^{\operatorname{th}}$ and $(j+r+2)^{\operatorname{th}}$ entries of $f$.
\item If $\kappa$ has multiplicity $r+1>1$, choose an index $s\in\{j+1,\ldots,j+2r+1\}$ and switch the $s^{\operatorname{th}}$ and $(s+1)^{\operatorname{th}}$ entries of $f$.
\end{itemize}
\item Suppose that $f$ is a free-homotopy class as in \eqref{eq:free-homotopy-class-in-pi_1(leafyG(tau))}, and that $\kappa=(e_{j},e_{j+1},\ldots,e_{m})$ is a kink of $f$. Suppose further that $\phi(f)=\{f_1,\ldots,f_n\}$ (see Definitions \ref{def:standard-form-of-a-morphism-in-a-graph-groupoid} and \ref{def:what-is-a-kink-on-a-graph}, as well as Remark \ref{rem:rotational-equivalence}), and that $\kappa$ appears as a kink of $f_j\in\phi(f)$. A \emph{partial resolution} of $\kappa$ in $f$ is the sequence $\widetilde{\rho}_{\kappa}(f_j)$ obtained by applying to $f_j$ one of the two combinatorial operations just described.
\end{enumerate}
\end{defi}

Suppose that $f$ is as in \eqref{eq:f-morfism-in-pi_1(leafyG(tau))} or \eqref{eq:free-homotopy-class-in-pi_1(leafyG(tau))}, and $\kappa$ is a kink of $f$.
If $f$ is a morphism and $\kappa$ has multiplicity $1$, then the sequence $\widetilde{\rho}_{\kappa}(f)$ is well defined, i.e., uniquely determined by $f$ and $\kappa$, whereas if $k$ has multiplicity $r+1>1$, then there is ambiguity in the definition of $\widetilde{\rho}_{\kappa}(f)$ as a sequence. Furthermore, if $f$ is a free-homotopy class, then $\kappa$ does not appear as a kink of all the representatives $f_1,\ldots,f_n$, as rotation of walks eventually breaks the appearance of $\kappa$ as a subsequence. However, we do have the following result, whose proof is immediate:

\begin{proposition}\label{prop:contraction-after-partial-resolution-of-kink}
Let $\tau$ be a triangulation of signature zero, and $\leafyG(\tau)$ its leafy dual graph, and let $f$ be as in \eqref{eq:f-morfism-in-pi_1(leafyG(tau))} or \eqref{eq:free-homotopy-class-in-pi_1(leafyG(tau))}.
\begin{enumerate}
\item If $f$ is a morphism, then for any kink $\kappa$ of $f$, the sequence $\phi(\widetilde{\rho}_{\kappa}(f))$ is uniquely determined by $f$ and $\kappa$. Thus, for any kink $\kappa$ of multiplicity greater than $1$, the sequence $\phi(\widetilde{\rho}_{\kappa}(f))$ is independent of the index $s$ chosen in Definition \ref{def:partial-resolution-of-kink}. 
\item If $f$ is a free-homotopy class, with standard form $\phi(f)=\{f_1,\ldots,f_n\}$, and $\kappa$ appears as a kink of $f_i$ and $f_j$, with such appearances being brought to each other by the corresponding rotations that bring $f_i$ and $f_j$ to each other, then $[\phi(\widetilde{\rho}_{\kappa}(f_i))]_{\operatorname{rot}}=[\phi(\widetilde{\rho}_{\kappa}(f_j))]_{\operatorname{rot}}$.
\end{enumerate}
Furthermore, the number of kinks of $\phi(\widetilde{\rho}_{\kappa}(f))$ (resp. $[\phi(\widetilde{\rho}_{\kappa}(f_j))]_{\operatorname{rot}}$), counted with multiplicity, is strictly less than the number of kinks of $f$, counted with multiplicity.
\end{proposition}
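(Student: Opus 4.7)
My plan is to split the proof according to the multiplicity of $\kappa$. In each case I check (a) that the standard form $\phi(\widetilde{\rho}_\kappa(f))$ is uniquely determined, and (b) that the total multiplicity of kinks strictly decreases. The free-homotopy claim will reduce to the morphism case by applying the argument to any representative $f_j\in\phi(f)$ containing $\kappa$ as a subsequence, since the local nature of the swap-and-reduce operation makes it commute with rotation.

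For multiplicity $1$ I note that the swap is a single, unambiguous transposition of the core entries $\eta_{v,i},\eta_{v,k}$ at positions $j+r+1,j+r+2$, so there is nothing to check at the level of sequences; I will verify directly that it produces a backtrack-free sequence, since the transposed entries differ from each other and from the surrounding edges $e_{j+r}=e_{j+r+3}\notin\{\eta_{v,i},\eta_{v,k}\}$, so $\widetilde{\rho}_\kappa(f)=\phi(\widetilde{\rho}_\kappa(f))$. For the count I will track the effect on the sign sequence: only $\varepsilon_{j+r},\varepsilon_{j+r+1},\varepsilon_{j+r+2}$ can be affected, and using the cyclic order of edges at the valence-$3$ vertices $v$ and $w_v$ imposed by the ribbon structure of $\leafyG(\tau)$, the central sign $\varepsilon_{j+r+1}$ must flip. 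Since condition (d) of Definition~\ref{def:what-is-a-kink-on-a-graph} requires $\varepsilon_j=\varepsilon_{j+r+1}=\varepsilon_{m-1}$ while the outer signs are unchanged, the kink at $(j,\ldots,m)$ disappears, and a brief local analysis with the remaining two flipped signs will rule out the creation of new kinks whose segment properly contains positions $j+r,j+r+1,j+r+2$.

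For multiplicity $r+1>1$ I will write $a:=\eta_{v,i}$ and $b:=\eta_{v,k}$, so the core of $\kappa$ is the alternating word $(ab)^{r+1}$. Fixing any $s\in\{j+1,\ldots,j+2r+1\}$, the swap at positions $s,s+1$ places, depending on the parity of $s-j$, an adjacent pair $aa$ or $bb$ inside the block, and I will show by a uniform word-level computation (split over the two parities and over the boundary subcases $k=1$ and $k=r+1$) that the ensuing free reduction cancels exactly four letters and produces $(ab)^{r-1}$, independent of $s$. Because the outer edges $e_j=e_m$ differ from both $a$ and $b$, the cascade will not escape the block when $r\ge 2$, which establishes uniqueness of $\phi(\widetilde{\rho}_\kappa(f))$. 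When $r=1$ the reduced block is empty and the equal outer edges $e_j,e_m$ become adjacent and cancel, after which any further propagation is dictated by $f$ alone and so is independent of $s$. The segment at this location contributes $r+1$ to the kink count of $f$ but at most $r-1$ to that of the new walk (and $0$ when $r=1$), while the remainder of the walk is pointwise unchanged, yielding the strict decrease. The hard part will be the word-level cascade calculation, which must be verified uniformly over the $2r+1$ possible positions of $s$; everything else is bookkeeping with the ribbon structure at self-folded triangles.
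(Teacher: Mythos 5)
Your handling of the uniqueness assertions is fine: for multiplicity $1$ there is nothing to choose, and for multiplicity $r+1>1$ your word-level computation that every admissible $s$ turns the core $(\eta_{v,i}\eta_{v,k})^{r+1}$ into a walk reducing to $(\eta_{v,i}\eta_{v,k})^{r-1}$ (the cascade being confined by $e_j=e_m\notin\{\eta_{v,i},\eta_{v,k}\}$ when $r\ge 2$, and governed by uniqueness of reduced walks when $r=1$), together with the reduction of the free-homotopy case to the morphism case, is exactly the right argument for parts (1) and (2).

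The genuine gap is in the final claim, the strict decrease of the kink count, and it sits precisely at the step you defer. A kink in the sense of Definition \ref{def:what-is-a-kink-on-a-graph} is not a local object: its wings $e_{j+1},\dots,e_{j+r}$ can be long, can themselves run through the cores of other self-folded triangles, and condition (d) is tested at the far ends of those wings. In the multiplicity-$1$ case the swap flips \emph{all three} signs $\varepsilon_{j+r},\varepsilon_{j+r+1},\varepsilon_{j+r+2}$ and changes the two entries $e_{j+r+1},e_{j+r+2}$; for a candidate kink centered at a \emph{different} core whose palindromic wings pass through these positions, the swap can shorten or lengthen the relevant palindromic radius and simultaneously flip the flank sign there, so that condition (d), which failed before, holds afterwards. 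For instance, on $\leafyG(\tau)$ for the twice-punctured disc with one boundary marked point (two self-folded triangles $v,v'$ attached to one triangle with dual edges $x,y$, and a boundary edge $z$), a walk of the shape $z\,x\,\eta_{v,\ast}\eta_{v,\ast}\,x\,y\,\eta_{v',\ast}\eta_{v',\ast}\,y\,x\,\eta_{v,\ast}\eta_{v,\ast}\eta_{v,\ast}\eta_{v,\ast}\,x\,z$ can be arranged (choosing the traversal directions of the two cores according to the ribbon structure) so that it carries one multiplicity-$1$ kink at the first passage around $v$ plus the multiplicity-$2$ kink at the last passage, while resolving the multiplicity-$1$ kink creates a \emph{new} multiplicity-$1$ kink centered at the passage around $v'$ (its palindromic radius drops and its flank signs flip into agreement); so the count is not controlled by inspecting a neighbourhood of the modified positions. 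The same non-locality threatens your multiplicity-$>1$ case, where ``the remainder of the walk is pointwise unchanged'' does not prevent palindromes and sign equalities reaching across the collapsed block from being created (compare Lemma \ref{lemma:key-lemma-for-confluence-of-kink-resolutions}, where a resolution followed by cancellation produces a new multiplicity-$2$ kink). So the ``brief local analysis'' you postpone is in fact the entire content of the last sentence of the proposition: a complete proof has to control these long-range interactions — for example by first showing that a given core can carry a kink only at its maximal palindromic radius and then tracking how one resolution changes the maximal radii and flank signs at \emph{all} cores — or has to replace the naive count by a different terminating measure; your proposal as written does not do this.
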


\begin{defi}\label{def:resolution-of-kink}
In the situation of Proposition \ref{prop:contraction-after-partial-resolution-of-kink}, 
$$\rho_\kappa(f):=\begin{cases}\phi(\widetilde{\rho}_\kappa(f)) & \text{if $f$ is a morphism};\\
[\phi(\widetilde{\rho}_{\kappa}(f_j))]_{\operatorname{rot}} & \text{if $f$ is a free-homotopy class}
\end{cases}$$ is the \emph{resolution} of $\kappa$ in~$f$.
\end{defi}

\begin{lemma}\label{lemma:kink-resolution-makes-multiplicity-drop}
Let $\tau$ be a triangulation of signature zero, and $\leafyG(\tau)$ its leafy dual graph, and let $f$ be as in \eqref{eq:f-morfism-in-pi_1(leafyG(tau))} or \eqref{eq:free-homotopy-class-in-pi_1(leafyG(tau))}.
\begin{itemize}
\item Resolving a kink of multiplicity $1$ makes it disappear but does not affect the length of $f$.
\item Resolving a kink of multiplicity $2$ makes it disappear and makes the length of $f$ drop by at least $4$.
\item Resolving a kink $\kappa$ of $f$ of multiplicity $3$ makes the multiplicity of $\kappa$ drop by $2$ or $3$ and the length of $f$ drop by $4$. 
\item Resolving a kink $\kappa$ of $f$ of multiplicity greater than $3$ makes the multiplicity of $\kappa$ drop by $2$ and the length of $f$ drop by $4$. 
\end{itemize}
\end{lemma}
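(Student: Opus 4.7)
The plan is a direct combinatorial case analysis, one bullet at a time. Throughout, let $v$ be the self-folded triangle of $\tau$ involved in $\kappa$ and $w_v$ the vertex of $\leafyG(\tau)$ introduced when splitting the loop at $v$, so that $\eta_{v,1}$ and $\eta_{v,2}$ are the two edges with endpoint set $\{v,w_v\}$ and $w_v$ has valency $3$. The free-homotopy version follows from the morphism version by picking a rotational representative in which $\kappa$ appears as a kink, so I focus on the morphism case.

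For the multiplicity-$1$ case, the swap exchanges the two central entries $\eta_{v,i}$ and $\eta_{v,k}$. Because the flanking edges $e_{j+r}$ and $e_{j+r+3}$ are equal, incident to $v$ and distinct from both $\eta_{v,i}$ and $\eta_{v,k}$, no new backtracking is created and the length of $\phi(f)$ is unchanged. The sign bookkeeping at the trivalent vertex $w_v$ shows that swapping the two $\eta$-edges flips the sign $\varepsilon_{j+r+1}$ (and, simultaneously, each of $\varepsilon_{j+r}$ and $\varepsilon_{j+r+2}$); hence condition (d) of Definition \ref{def:what-is-a-kink-on-a-graph} fails in the new walk, so $\kappa$ is no longer a kink.

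For multiplicity $r+1\geq 2$, the allowed swap at any $s\in\{j+1,\ldots,j+2r+1\}$ introduces either one or two adjacent pairs of identical $\eta$-edges in the core. Contracting such a pair makes the two $\eta$-edges on either side of it identical and adjacent, triggering a finite cascade of backtrack cancellations. I would check, directly, that this cascade terminates after it has eliminated exactly one full pair $(\eta_{v,i},\eta_{v,k})$ from the core, removing $4$ edges; this already gives the last bullet because the remaining core of $r$ pairs, still flanked by the same $e_j=e_m$, satisfies conditions (i), (ii) of Definition \ref{def:what-is-a-kink-on-a-graph} with $r$ in place of $r+1$, i.e., the new kink has multiplicity $r$. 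In the multiplicity-$2$ case the shortened core is empty, so $e_j$ and $e_m$ become adjacent after the $4$-edge removal and, since they are equal, contract further: at least two more edges are lost, yielding the claimed length drop by at least $4$ and the disappearance of $\kappa$ (possible further cascades outside the kink segment only strengthen this bound).

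The multiplicity-$3$ case is the delicate one and will be the main obstacle. After the cascade removes $4$ edges as in the general step, the local picture is the segment $(e_{j-1},e_j,\eta_{v,i},\eta_{v,k},e_m,e_{m+1})$ with $e_j=e_m$; conditions (a), (b), (c) of the multiplicity-$1$ kink definition are automatic. Whether the resulting segment is a kink or not is then decided by the sign condition (d). I would recompute the new signs $\varepsilon_{j-1},\varepsilon_{j+2},\varepsilon_{m}$ from the original ones using the ribbon structure of $\leafyG(\tau)$ at $v$ and $w_v$ (both trivalent) and show that the three required signs coincide in some but not all original configurations. When they do coincide, $\kappa$ survives as a multiplicity-$1$ kink (drop of $2$); otherwise it is gone (drop of $3$). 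Either way the length drops by exactly $4$. The whole difficulty of the lemma is concentrated in these sign checks, but since every relevant turn happens at a trivalent vertex, each sign is one of two possibilities and the verification is a finite, exhaustive case check.
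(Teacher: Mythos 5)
Your overall strategy -- a direct case check of the swap, followed by tracking the cascade of backtrack cancellations and the turn signs at the two trivalent vertices $v$ and $w_v$ -- is the natural one (the paper itself leaves this verification to the reader). Your multiplicity-$1$ argument is correct: the swap flips exactly the three signs $\varepsilon_{j+r}$, $\varepsilon_{j+r+1}$, $\varepsilon_{j+r+2}$, and since $r\geq 1$ the signs $\varepsilon_j$ and $\varepsilon_{m-1}$ are untouched, so condition (d) of Definition \ref{def:what-is-a-kink-on-a-graph} fails while the length is unchanged. Your multiplicity-$3$ discussion is also correct, and correctly isolates the sign check at the surviving central pair as the only delicate point; the reduction of the free-homotopy case to the morphism case by choosing a rotational representative in which $\kappa$ appears is unproblematic here, since for multiplicity $\geq 3$ the resolution only alters the interior of the core and cannot trigger cyclic cancellation.

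The genuine problem is the bookkeeping in your general multiplicity-$(r+1)$ step, which as written does not give the last bullet. The cascade cancels one pair $(\eta_{v,i},\eta_{v,i})$ and one pair $(\eta_{v,k},\eta_{v,k})$, i.e.\ four edges, so the core shrinks from $r+1$ alternating periods $(\eta_{v,i},\eta_{v,k})$ to $r-1$ such periods -- not to $r$, as you assert (``eliminated exactly one full pair \dots the remaining core of $r$ pairs \dots the new kink has multiplicity $r$''). With your figures the multiplicity would drop by $1$, which contradicts the bullet you claim to deduce, and also contradicts your own (correct) multiplicity-$3$ computation, where the surviving core consists of a single period; note the internal inconsistency with your ``removing $4$ edges''. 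The correct count is: for multiplicity $r+1>3$ the surviving core has $r-1\geq 2$ periods flanked by $e_j=e_m\notin\{\eta_{v,i},\eta_{v,k}\}$, hence is automatically a kink of the second type (no sign condition), so the multiplicity drops by exactly $2$ and the length by exactly $4$; for multiplicity $2$ the core is wiped out, after which $e_j=e_m$ become adjacent and cancel as well, so the drop is in fact at least $6$, a fortiori at least $4$. Finally, the one computation you defer (``I would check directly'' that the cascade stops after exactly four removals) should be spelled out, though it is quick: after the two pair-cancellations the surviving core is again alternating and its outer neighbours are $e_j=e_m$, which differ from both $\eta$-edges, so no further backtracking occurs when the core is nonempty.
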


The proof of Lemma \ref{lemma:kink-resolution-makes-multiplicity-drop} is left to the reader.

\begin{lemma}\label{lemma:key-lemma-for-confluence-of-kink-resolutions} Let $u_0,v_0,w_0$ be vertices of $G(\tau)$ distinct from all the self-folded triangles of $\tau$.
Suppose $f=(u_0,e_1,e_2,\ldots,e_n,v_0)$ and $g=(v_0,e_n,\ldots,e_{2},d_1,w_0)$ are non-identity morphisms in $\pi_1(\leafyG)$, both written in standard form, with $e_1\neq d_1$, and that $\kappa=(e_{j},\ldots,e_{m})$ is a kink of $(e_2,\ldots,e_n)$. 
\begin{enumerate}
\item If the multiplicity of $\kappa$ is $1$, then for $r:=\frac{m-j-3}{2}$ we have
$$\phi(\rho_{\kappa}(f)*g)=(u_0,e_1,\ldots,e_{j+r-1},\underset{\text{entries $j+r$ to $j+r+5$}}{\underbrace{e_{j+r},e_{j+r+2},e_{j+r+1},e_{j+r+2},e_{j+r+1},e_{j+r}}},,e_{j+r-1},\ldots,d_1,w_0);$$
\item if the multiplicity of $\kappa$ is greater than $1$, then
$$\phi(\rho_{\kappa}(f)*g)=(u_0,e_1,e_2,\ldots,e_{j-1},\underset{\text{entries $j$ to $j+5$}}{\underbrace{e_{j},e_{j+1},e_{j+2},e_{j+1},e_{j+2},e_{j}}},,e_{j-1},\ldots,e_2,d_1,w_0);$$
\end{enumerate}
In any case, 
$\phi(\rho_{\kappa}(f)*g)$ has a kink $\kappa'$ of multiplicity $2$, such that
$$\phi(f*g)=(u_0,e_1,d_1,,w_0)
=\rho_{\kappa'}(\phi(\rho_{\kappa}(f)*g)).$$
\end{lemma}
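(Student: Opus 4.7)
The plan is to perform the composition $\rho_\kappa(f) * g$ explicitly, identify a kink $\kappa'$ of multiplicity~$2$ in its standard form, and verify that resolving $\kappa'$ collapses everything down to $(u_0, e_1, d_1, w_0)$. The argument is uniform across both multiplicity cases, differing only in the choice of swap position~$s$.

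For the equality $\phi(f*g)=(u_0,e_1,d_1,w_0)$, the concatenation $(u_0,e_1,e_2,\ldots,e_n,e_n,e_{n-1},\ldots,e_2,d_1,w_0)$ contains the consecutive repetitions $e_\ell,e_\ell$ for every $\ell\in\{2,\ldots,n\}$; these telescope pairwise as backtracks, and together with $e_1\ne d_1$ this reduces to $(u_0,e_1,d_1,w_0)$.

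To compute $\phi(\rho_{\kappa}(f)*g)$, I would exploit $\phi(\rho_{\kappa}(f)*g)=\phi(\widetilde{\rho}_{\kappa}(f)*g)$ and pick a concrete swap position: $s=j+r+1$ in the multiplicity-$1$ case, and $s=j+1$ in the multiplicity $r+1>1$ case (legitimate by Proposition \ref{prop:contraction-after-partial-resolution-of-kink}). The key observation is that $\widetilde{\rho}_\kappa(f)$ agrees with $f$ at all positions except the swapped pair $(s,s+1)$. Consequently, in the concatenation $\widetilde{\rho}_\kappa(f)*g$, for every $k\in\{s+2,\ldots,n\}$ the edge at position~$k$ of $\widetilde{\rho}_\kappa(f)$ equals $e_k$, and its mirror counterpart in $g$ is also $e_k$; iteratively cancelling these pairs reduces the whole concatenation to $(u_0, e_1, \ldots, e_{s-1}, e_{s+1}, e_s, e_{s+1}, e_s, e_{s-1}, \ldots, e_2, d_1, w_0)$. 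Using the kink conditions one has $e_s=\eta_{v,i}$, $e_{s+1}=\eta_{v,k}$ and $e_{s-1}\in\{e_{j+r},e_j\}$ with $e_{s-1}\notin\{\eta_{v,i},\eta_{v,k}\}$ by Definition \ref{def:what-is-a-kink-on-a-graph}(c) or (ii), which makes the middle sextuple $(e_{s-1},\eta_{v,k},\eta_{v,i},\eta_{v,k},\eta_{v,i},e_{s-1})$ together with its surroundings backtrack-free. This establishes the displayed form of $\phi(\rho_{\kappa}(f)*g)$ and identifies its middle as a kink $\kappa'$ of multiplicity~$2$ (under the second set of conditions of Definition \ref{def:what-is-a-kink-on-a-graph}, with the roles of the two indices interchanged relative to $\kappa$).

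Finally, I would resolve $\kappa'$ by swapping any adjacent pair in its alternating core: this introduces three successive backtracks --- $\eta_{v,k},\eta_{v,k}$, then $\eta_{v,i},\eta_{v,i}$, then $e_{s-1},e_{s-1}$ --- that collapse the sextuple to nothing, exposing the flanking $e_{s-2}$ on both sides as a fresh backtrack. By the same mirror symmetry, a cascade of cancellations $e_\ell,e_\ell$ with decreasing $\ell$ then propagates down to $e_2,e_2$, halting at $e_1$ versus $d_1$, which are distinct by hypothesis. What remains is exactly $(u_0,e_1,d_1,w_0)$, as required. The main delicate point is the bookkeeping of this cascading cancellation; the reason it propagates without obstruction is precisely that, at every stage, the two edges freshly exposed at the boundary of the collapsing gap come from mirror-symmetric positions in $\widetilde{\rho}_\kappa(f)$ and $g$, and hence are literally the same edge.
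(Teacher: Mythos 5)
Your proposal is correct and is essentially the paper's own argument made explicit: the paper computes $\phi(\rho_{\kappa}(f)*g)$ directly (via the figures for multiplicities $1$ and $3$, declaring the general case analogous), while you carry out the same mirror-cancellation bookkeeping symbolically, using the flank condition $e_{s-1}\notin\{\eta_{v,i},\eta_{v,k}\}$ to certify backtrack-freeness and then uniqueness of standard forms, exactly as needed. One remark: in the multiplicity $>1$ case your computation yields the core in the order $e_{j+2},e_{j+1},e_{j+2},e_{j+1}$, whereas the displayed formula in item (2) of the statement prints $e_{j+1},e_{j+2},e_{j+1},e_{j+2}$; a direct check on a multiplicity-$2$ example (e.g.\ $f=(u_0,a,b,\eta_{v,1},\eta_{v,2},\eta_{v,1},\eta_{v,2},b,c,v_0)$) confirms your version, so the printed display appears to transpose the two $\eta$-labels (equivalently, it holds after interchanging the roles of $i$ and $k$), and this discrepancy is immaterial for the final assertion about $\kappa'$ and $\rho_{\kappa'}(\phi(\rho_{\kappa}(f)*g))=\phi(f*g)$, which your cascade argument establishes correctly.
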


\begin{proof} In the case where the multiplicity of $\kappa$ is $1$, the computation of $\phi(\rho_{\kappa}(f)*g)$ can be seen in Figure \ref{Fig:keyLemmaMult1},
            \begin{figure}[ht]
                \caption{}\label{Fig:keyLemmaMult1}
                \centering
                \includegraphics[scale=.35]{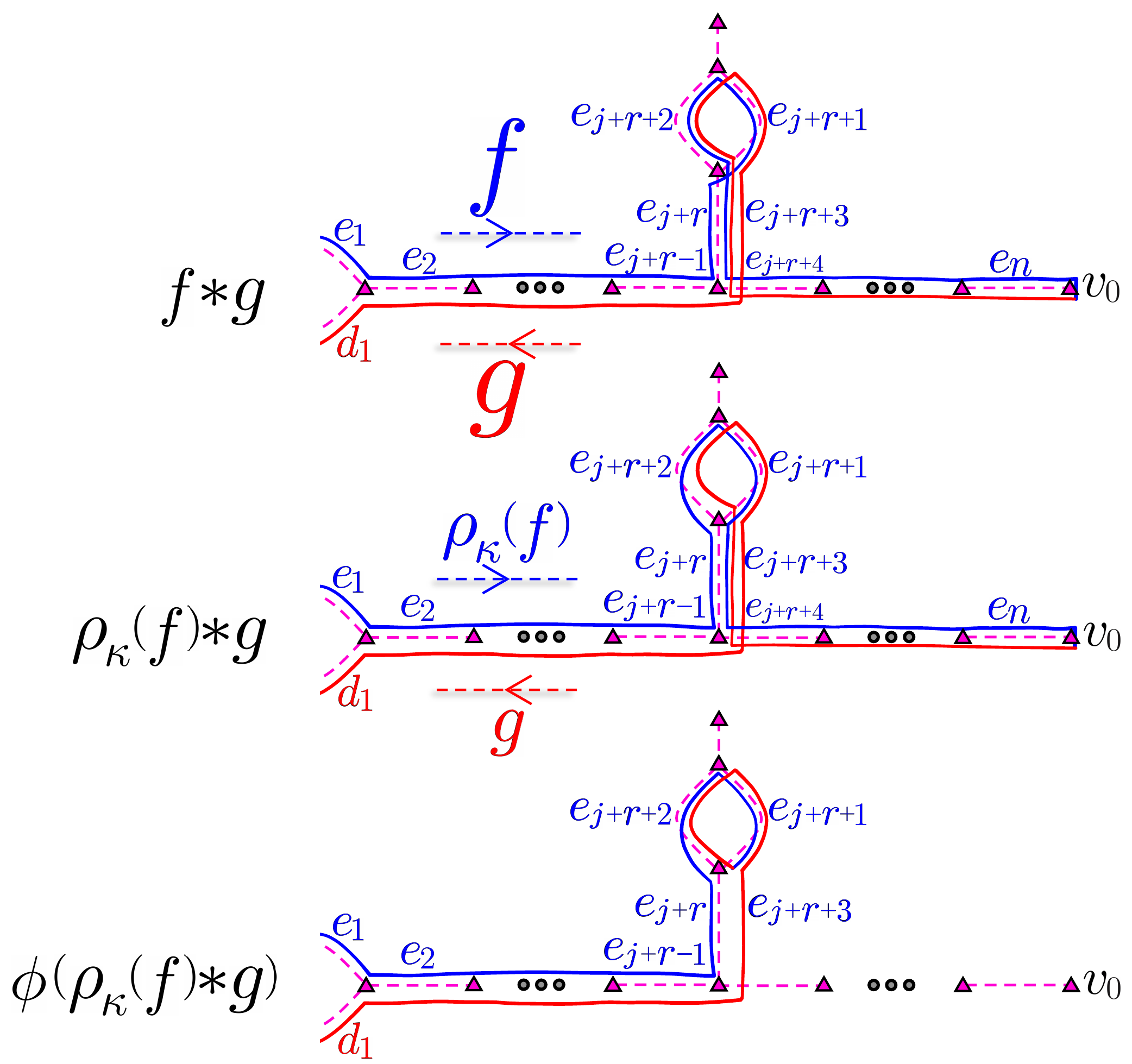}
        \end{figure}
    whereas
    in Figure \ref{Fig:keyLemmaMult3} we have included the computation of $\phi(\rho_{\kappa}(f)*g)$ in the case where the multiplicity of $\kappa$ is exactly $3$.
                    \begin{figure}[ht]
                \caption{}\label{Fig:keyLemmaMult3}
                \centering
                \includegraphics[scale=.35]{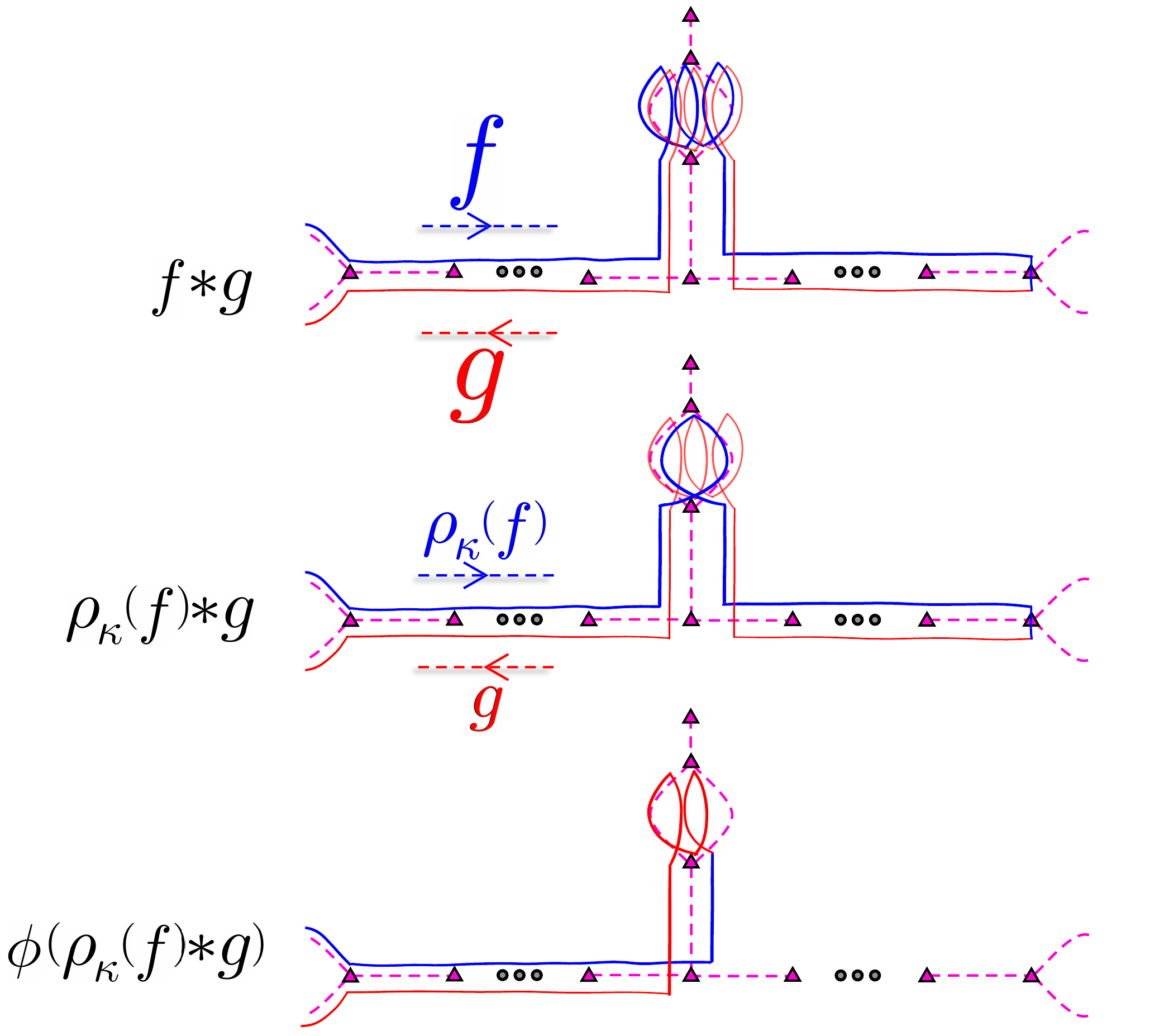}
        \end{figure}
     The computation $\phi(\rho_{\kappa}(f)*g)$ in the case where the multiplicity of $\kappa$ is an arbitrary integer greater than $1$ is completely analogous.

With $\phi(\rho_{\kappa}(f)*g)$ explicitly computed, the assertions of the lemma can be directly verified.
\end{proof}

\begin{proposition}\label{prop:local-confluence-of-kink-resolutions}
Let $\tau$ be a triangulation of signature zero, and $\leafyG(\tau)$ its leafy dual graph, and let $\varphi$ be as in \eqref{eq:f-morfism-in-pi_1(leafyG(tau))} or \eqref{eq:free-homotopy-class-in-pi_1(leafyG(tau))}.
If $\kappa_0$ and $\kappa_0'$ are distinct kinks of $\varphi$, then there exist finite sequences of kinks
    $(\kappa_1,\ldots,\kappa_{\ell})$ and $(\kappa_1',\ldots,\kappa_{\ell'}')$ such that:
    \begin{itemize}
        \item[(1)] for each $t=1,\ldots,\ell$, $\kappa_t$ is a kink of $\rho_{\kappa_{t_1}}\rho_{\kappa_{t-2}}\cdots\rho_{\kappa_0}(\varphi)$;
        \item[(2)] for each $t=1,\ldots,\ell'$, $\kappa'_t$ is a kink of $\rho_{\kappa_{t_1}'}\rho_{\kappa_{t-2}'}\cdots\rho_{\kappa_0'}(\varphi)$; 
        \item[(3)] $\rho_{\kappa_\ell}\cdots\rho_{\kappa_1}\rho_{\kappa_0}(\varphi)=\rho_{\kappa'_{\ell'}}\cdots\rho_{\kappa'_1}\rho_{\kappa'_0}(\varphi)$ if $\varphi$ is a morphism; 
        \item[(3')] $\rho_{\kappa_\ell}\cdots\rho_{\kappa_1}\rho_{\kappa_0}(\varphi)$ is equal to $\rho_{\kappa'_{\ell'}}\cdots\rho_{\kappa'_1}\rho_{\kappa'_0}(\varphi)$ or its opposite orientation if $\varphi$ is a free-homotopy class as in \eqref{eq:free-homotopy-class-in-pi_1(leafyG(tau))}.
    \end{itemize}
\end{proposition}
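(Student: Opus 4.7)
The plan is a case analysis on the relative positions of $\kappa_0$ and $\kappa_0'$ inside the standard form $\phi(\varphi) = (u_0, e_1, \ldots, e_n, v_0)$ (taking $j_0 \leq j_0'$ without loss of generality). The free-homotopy case (3') will be deduced at the end by passing to a rotational representative in which both kinks appear as interior contiguous subsequences, with an orientation reversal absorbed by the $\sim$ relation. In the \emph{disjoint} case, where the extents of $\kappa_0$ and $\kappa_0'$ share at most a single boundary edge, resolving one kink does not disturb the positions determining the other: $\kappa_0'$ persists (possibly shifted) as a kink of $\rho_{\kappa_0}(\varphi)$ and conversely, and a direct verification using Definitions~\ref{def:partial-resolution-of-kink}--\ref{def:resolution-of-kink} together with the length control of Lemma~\ref{lemma:kink-resolution-makes-multiplicity-drop} gives $\rho_{\kappa_0'}\rho_{\kappa_0}(\varphi) = \rho_{\kappa_0}\rho_{\kappa_0'}(\varphi)$, so confluence holds with $\ell=\ell'=1$.

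The essential work is the \emph{overlapping} case. A structural preliminary is that the cores of two distinct kinks can neither coincide nor properly overlap: for multiplicities at least~$2$, each core is a maximal $\eta_{v,i},\eta_{v,k}$-alternation isolated by non-loop flanking edges, so distinct cores are disjoint; for a mult-$1$ core attempting to nest inside or abut another core, the adjacency structure of $\leafyG(\tau)$ (no triangle vertex equals any $w_v$ vertex) forces the shared flanking edge to belong simultaneously to incompatible sets of $\eta$-edges, yielding a contradiction. Hence overlap is confined to the palindromic flanks, and one enumerates the finitely many admissible sub-configurations. In each such configuration, I apply Lemma~\ref{lemma:key-lemma-for-confluence-of-kink-resolutions} to an appropriate sub-walk of $\phi(\varphi)$ decomposed as $f * g$ in the mirror form demanded by the lemma (with one of the two kinks belonging to the spine of $f$ and the other being its mirror image in $g$): resolving $\kappa_0$ then creates a multiplicity-$2$ kink $\kappa':=\kappa_1$ at what was the interface, and the explicit formula for $\phi(\rho_\kappa(f)*g)$ given in the lemma allows one to identify $\rho_{\kappa_1}\rho_{\kappa_0}(\varphi)$ with $\phi(f*g)$; the symmetric computation on the $\kappa_0'$ side yields $\rho_{\kappa_1'}\rho_{\kappa_0'}(\varphi)=\phi(f*g)$, completing the diamond.

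The main obstacle I anticipate is the detailed bookkeeping in the overlap sub-cases when one or both kinks have large multiplicity: resolving such a kink triggers the cascading backtrack cancellations quantified by Lemma~\ref{lemma:kink-resolution-makes-multiplicity-drop}, and one must verify that the residual standard form is \emph{precisely} of the shape needed to invoke Lemma~\ref{lemma:key-lemma-for-confluence-of-kink-resolutions} with the correct $f*g$ decomposition isolating the relevant interface. Once this verification is carried out in each sub-configuration, part~(3) of the proposition follows; part~(3') is then obtained by applying (3) to a rotational representative of $[\phi(\varphi)]_{\operatorname{rot}}$ containing both kinks as interior subsequences, with the possible opposite-orientation identification entering precisely when the two resolution sequences naturally arrive at walks that are rotational reverses of each other.
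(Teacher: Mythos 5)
There is a genuine gap, and it sits exactly where you dispose of the ``disjoint'' case. You claim that if the extents of $\kappa_0$ and $\kappa_0'$ share at most a boundary edge, then $\kappa_0'$ persists as a kink of $\rho_{\kappa_0}(\varphi)$ and $\rho_{\kappa_0'}\rho_{\kappa_0}(\varphi)=\rho_{\kappa_0}\rho_{\kappa_0'}(\varphi)$ with $\ell=\ell'=1$. This is false when one of the kinks, say $\kappa_0$, has multiplicity $2$: its extent is only the six edges $(e_j,\eta_{v,i},\eta_{v,k},\eta_{v,i},\eta_{v,k},e_j)$, but resolving it triggers a backtrack--cancellation cascade that propagates as far as the walk is palindromic around the kink (this is the meaning of ``drops by \emph{at least} $4$'' in Lemma~\ref{lemma:kink-resolution-makes-multiplicity-drop}). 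Concretely, write $\varphi=\alpha*f*\kappa_0*g*\beta$ with $f=(u_0,e_1,\ldots,e_n,v_0)$ and $g=(v_0,e_n,\ldots,e_2,d_1,w_0)$ the maximal mirror around $\kappa_0$; a second kink $\kappa_0'$ can sit well inside $f$ (so its extent is disjoint from that of $\kappa_0$), with its mirror image inside $g$. Then $\rho_{\kappa_0}(\varphi)=\phi(\alpha*f*g*\beta)$ contains no trace of $\kappa_0'$ at all -- it is swallowed by the cancellation -- so $\kappa_0'$ does not persist, and the two single resolutions cannot close the diamond: on one side nothing remains to do, while on the other one must resolve $\kappa_0'$, then $\kappa_0$, and then the \emph{new} multiplicity-$2$ kink created at the interface, which is precisely the content of Lemma~\ref{lemma:key-lemma-for-confluence-of-kink-resolutions} and of the paper's Cases~2--4 (the sequences have different lengths, e.g.\ $\ell=0$ versus $\ell'=2$). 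Your ``overlapping'' analysis, being keyed to overlapping extents (or to palindromic flanks, which for multiplicity $\geq 2$ are not part of the kink data), never reaches this configuration, so the hardest interaction is misfiled into the case you declare trivial. Note also that your structural claim that the nontrivial interaction ``is confined to the palindromic flanks'' implicitly assumes the multiplicity-$1$ picture, where the flank of length $r$ belongs to the extent; for higher multiplicity it does not.

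A secondary shortfall is the free-homotopy case: it is not always possible to choose a rotational representative in which both kinks appear as interior subsequences and in which the intermediate resolutions remain closed backtrack-free walks. The paper isolates the exceptional closed walks (Figures~\ref{Fig:interferingKinksClosedCurve} and \ref{Fig:generalAsymmetricKinkClosedCurve}), where the two kinks are joined by two mirror arcs wrapping all the way around; there the cancellation wraps past the base point, rotation is forced, and the opposite-orientation outcome in (3') arises specifically in the ``antipodal'' configuration. Your one-sentence reduction of (3') to (3) does not account for these walks, and they are exactly the ones producing the orientation-reversal caveat in the statement.
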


\begin{proof} Throughout the proof we will assume, without loss of generality, that $\varphi$ is already written in standard form.

Suppose first that $\varphi$ is a morphism as in \eqref{eq:f-morfism-in-pi_1(leafyG(tau))}. 
We distinguish five cases, namely:
    \begin{itemize}
        \item[(i)] when each of $\kappa_0$ and $\kappa_0'$ has multiplicity greater than $2$;
        \item[(ii)] when one of $\kappa_0$ and $\kappa_0'$ has multiplicity greater than $2$ and the other one has multiplicity equal to $2$;
        \item[(iii)] when each of $\kappa_0$ and $\kappa_0'$ has multiplicity equal to $2$;
        \item[(iv)] when one of $\kappa_0$ and $\kappa_0'$ has multiplicity equal to $2$ and the other one has multiplicity equal to $1$;
        \item[(v)] when each of $\kappa_0$ and $\kappa_0'$ has multiplicity equal to $1$.
    \end{itemize}

\begin{case}
    Using Lemma \ref{lemma:kink-resolution-makes-multiplicity-drop}, it is easy to verify that if each of $\kappa_0$ and $\kappa_0'$ has multiplicity greater than $2$, then $\kappa_0$ is a kink of $\rho_{\kappa_0'}(\varphi)$, $\kappa_0'$ is a kink of $\rho_{\kappa_0}(\varphi)$, and $\rho_{\kappa_0}(\rho_{\kappa_0'}(\varphi))=\rho_{\kappa_0'}(\rho_{\kappa_0}(\varphi))$, so the proposition follows in that case.
\end{case}

\begin{case}\label{case:mult2-mult>2}
Suppose that the multiplicity of $\kappa_0$ is $2$ and the multiplicity of $\kappa_0'$ is greater than~$2$. Then $\kappa_0$ is a kink of $\rho_{\kappa_0'}(\varphi)$ by Lemma \ref{lemma:kink-resolution-makes-multiplicity-drop}, and $\varphi$ can be written as a concatenation
$$
\varphi=
\alpha*f*\kappa_0*g*\beta,
$$
where $\alpha$, $f$, $g$ and $\beta$ are morphisms in $\leafyG(\tau)$ between vertices of $G(\tau)$ distinct from all the self-folded triangles of $\tau$, with $f=(u_0,e_1,e_2,\ldots,e_n,v_0)$ and $g=(v_0,e_n,\ldots,e_{2},d_1,w_0)$, and either
$$
\begin{cases}
\text{$e_1 = d_1$ and $\alpha=\myid_{u_0}=\beta$}, & \text{or} \\
e_1 \neq d_1.
\end{cases}
$$

If $e_1\neq d_1$, then either $\kappa_0'$ is disjoint from $f$ and from $g$, or it overlaps one of them. If $\kappa_0'$ is entirely contained in $f$, then we can apply Lemma \ref{lemma:key-lemma-for-confluence-of-kink-resolutions} to obtain a kink $\kappa_1'$ of $\rho_{\kappa_0}\rho_{\kappa_0'}(\varphi)=\phi(\alpha*\rho_{\kappa_0'}(f)*g*\beta)$ such that
$$
\rho_{\kappa_0}(\varphi)=\phi(\alpha*f*g*\beta)=\rho_{\kappa_1'}\rho_{\kappa_0}\rho_{\kappa_0'}(\varphi),
$$
which is the assertion of the proposition.
Essentially the same argument shows that the proposition holds also when $\kappa_0'$ either is entirely contained in $g$, or is disjoint from $f$ and from $g$, or overlaps one of them without being entirely contained in either. 

The same argument works also if $e_1 = d_1$ and $\alpha=\myid_{u_0}=\beta$; notice that in this case we have $\rho_{\kappa_0}(\varphi)=\myid_{u_0}$.
\end{case}

\begin{case} Suppose that each of $\kappa_0$ and $\kappa_0'$ has multiplicity equal to $2$. Then $\varphi$ can be written as concatenations
$$
\alpha*f*\kappa_0*g*\beta=\varphi =\alpha'*f'*\kappa_0'*g'*\beta',
$$
where $\alpha$, $\alpha'$, $f$, $f'$, $g$, $g'$, $\beta$ and $\beta'$ are morphisms in $\leafyG(\tau)$ between vertices of $G(\tau)$ that are distinct from all the self-folded triangles of $\tau$, with $f=(u_0,e_1,e_2,\ldots,e_n,v_0)$, $g=(v_0,e_n,\ldots,e_{2},d_1,w_0)$, $f'=(u_0',e_1',e_2',\ldots,e_n',v_0')$, $g'=(v_0',e_n',\ldots,e_{2}',d_1',w_0')$, and either
$$
\begin{cases}
\text{$e_1 = d_1$ and $\alpha=\myid_{u_0}=\beta$}, & \text{or} \\
e_1 \neq d_1.
\end{cases}
$$

If $e_1\neq d_1$, then $\kappa_0'$ either is disjoint from $f$ and from $g$, or it overlaps one of them. We analyze the situation where $\kappa_0'$ is entirely contained in $f$, and leave in the hands of the reader the analysis of the remaining possibilities.

So, suppose $\kappa_0'$ is entirely contained in $f$. Then $\kappa_0$ either is disjoint from $f'$ and from $g'$, or it overlaps one of them.
                    \begin{figure}[ht]
                \caption{}\label{Fig:kinkResConfluenceMult2Mult2SingleOverlap}
                \centering
                \includegraphics[scale=.15]{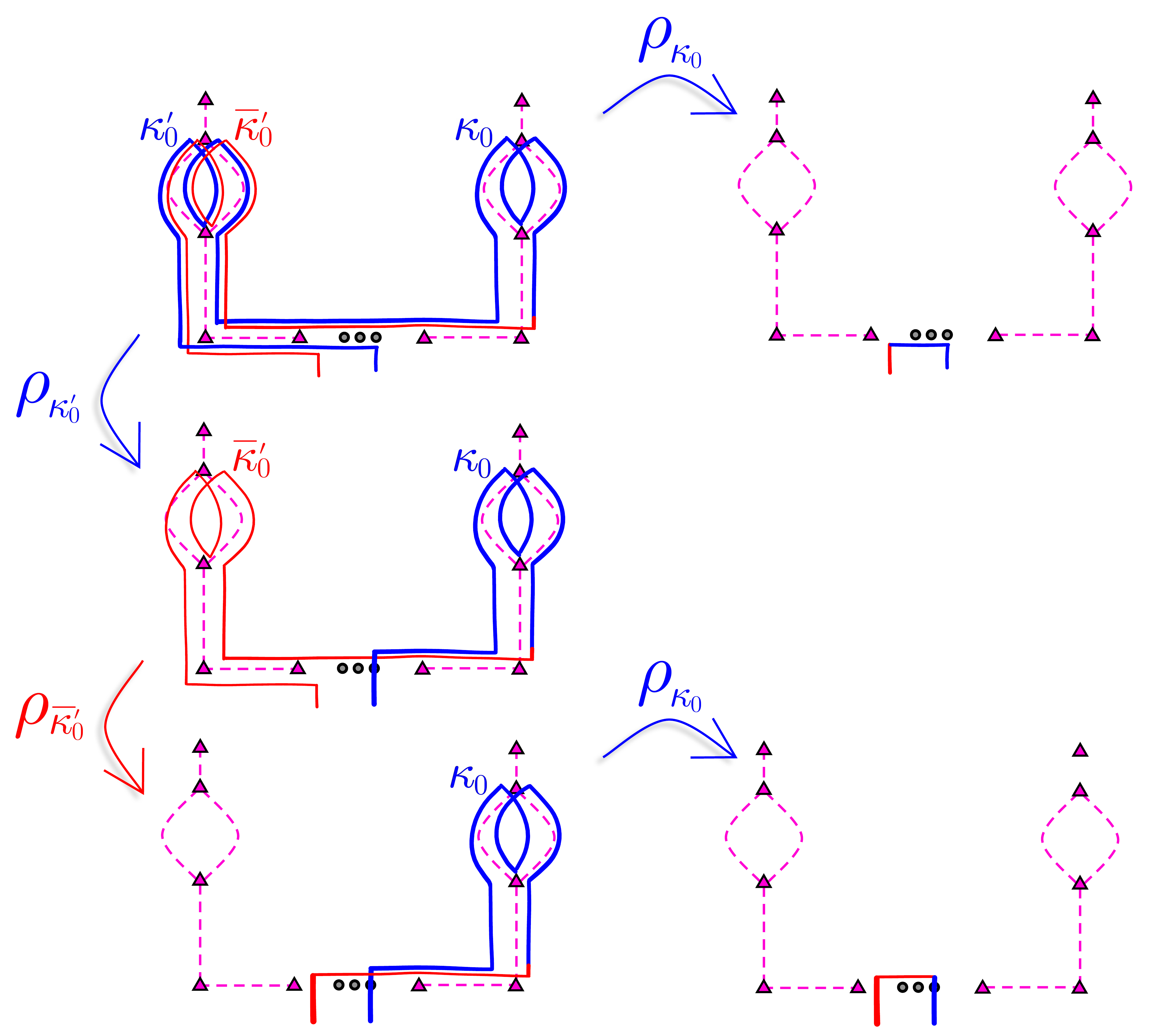}
        \end{figure}
If $\kappa_0$ is disjoint from both $f'$ and $g'$, then Figure \ref{Fig:kinkResConfluenceMult2Mult2SingleOverlap} shows us a kink $\overline{\kappa}_0'$ of $\rho_{\kappa_0'}(\varphi)$ such that $\kappa_0$ is a kink of $\rho_{\overline{\kappa}_0'}\rho_{\kappa_0'}(\varphi)$ and
$$
\rho_{\kappa_0}(\varphi)=\rho_{\kappa_0}\rho_{\overline{\kappa}_0'}\rho_{\kappa_0'}(\varphi).
$$
If $\kappa_0$ is entirely contained in $f$, then Figure \ref{Fig:kinkResConfluenceMult2Mult2DoubleOverlap}
                            \begin{figure}[ht]
                \caption{}\label{Fig:kinkResConfluenceMult2Mult2DoubleOverlap}
                \centering
                \includegraphics[scale=.15]{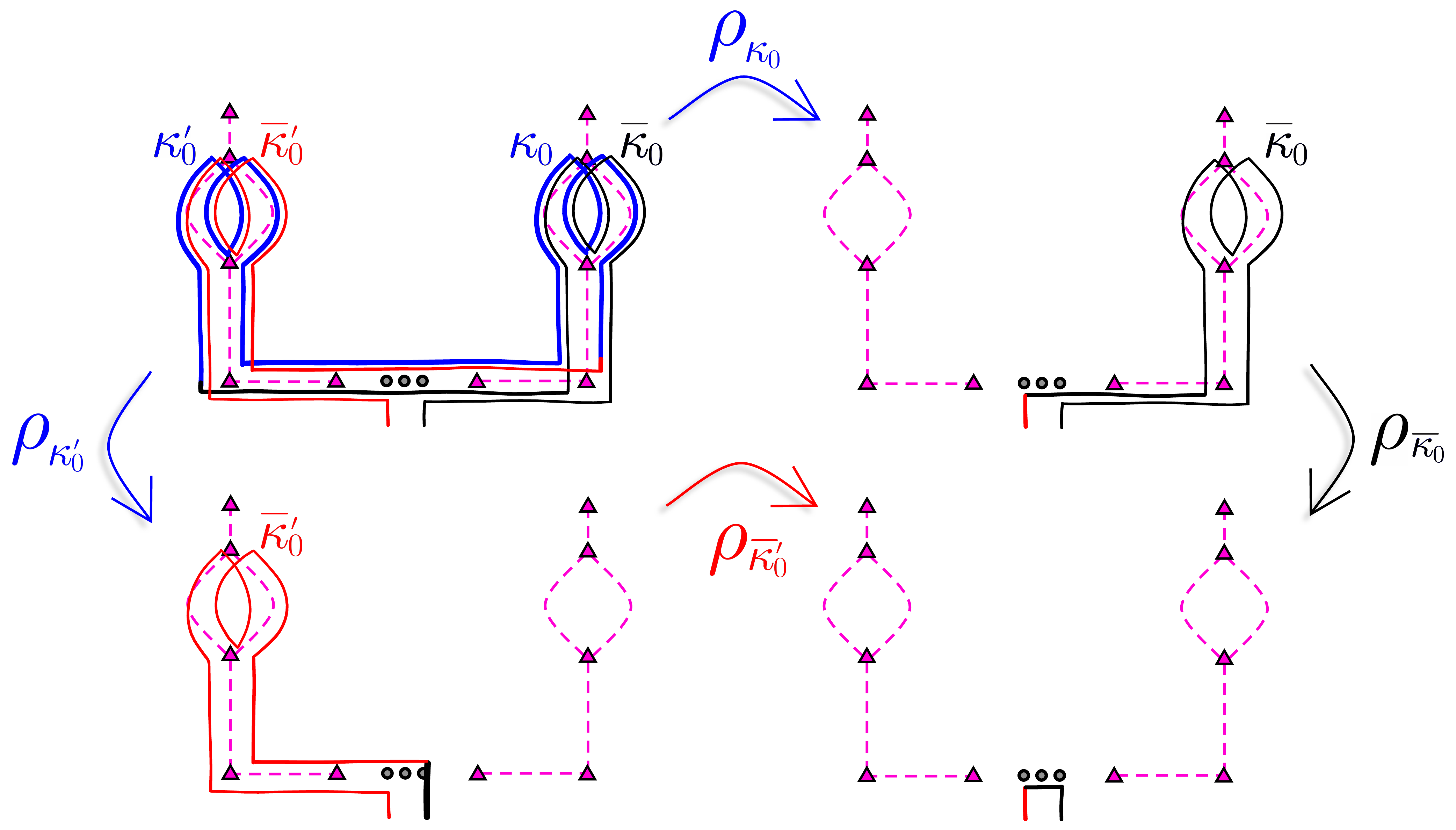}
        \end{figure}
shows us kinks $\overline{\kappa}_0$ and $\overline{\kappa}_0'$ such that
$$
\rho_{\overline{\kappa}_0'}\rho_{\kappa_0'}(\varphi)=\rho_{\overline{\kappa}_0}\rho_{\kappa_0}(\varphi).
$$
We leave in the reader's hands to analyze the cases where $\kappa_0$ is entirely contained in $g'$, or it overlaps $f'$ or $g'$ without being entirely contained in either.
\end{case}

\begin{case} If one of $\kappa_0$ and $\kappa_0'$ has multiplicity equal to $2$ and the other one has multiplicity equal to $1$, one can resort to Lemma \ref{lemma:key-lemma-for-confluence-of-kink-resolutions} as we did in Case \ref{case:mult2-mult>2} to obtain the assertion of the proposition.
\end{case}

\begin{case} If each of $\kappa_0$ and $\kappa_0'$ has multiplicity equal to $1$, then $\rho_{\kappa_0'}\rho_{\kappa_0}(\varphi)=\rho_{\kappa_0}\rho_{\kappa_0'}(\varphi)$.
\end{case}

Suppose now that $\varphi$ is a free-homotopy class as in \eqref{eq:free-homotopy-class-in-pi_1(leafyG(tau))}. If both $\kappa_0$ and $\kappa_0'$ appear as kinks of at least one of the closed backtrack-free walks $f$ representing $\varphi$, considering these walks as morphisms in $\pi_1(\leafyG(\tau))$, and if the standard forms of the morphisms
$\rho_{\kappa_0}(f)$ and $\rho_{\kappa_0'}(f)$ are closed backtrack-free walks (see Definition \ref{def:backtrack-free-walks}), then the result follows from the proof we have just given above. Otherwise, $\varphi$ is represented by one of the closed backtrack-free walks depicted in Figure \ref{Fig:interferingKinksClosedCurve} and \ref{Fig:generalAsymmetricKinkClosedCurve}
(in Figure \ref{Fig:interferingKinksClosedCurve}, $\kappa_0$ and $\kappa_0'$ are images of antipodal segments of $\mathbb{S}^1$, whereas in Figure \ref{Fig:generalAsymmetricKinkClosedCurve} they are not, see Remark \ref{rem:meaning_of_antipodal} below).
        \begin{figure}[ht]
                \caption{}\label{Fig:interferingKinksClosedCurve}
                \centering
                \includegraphics[scale=.4]{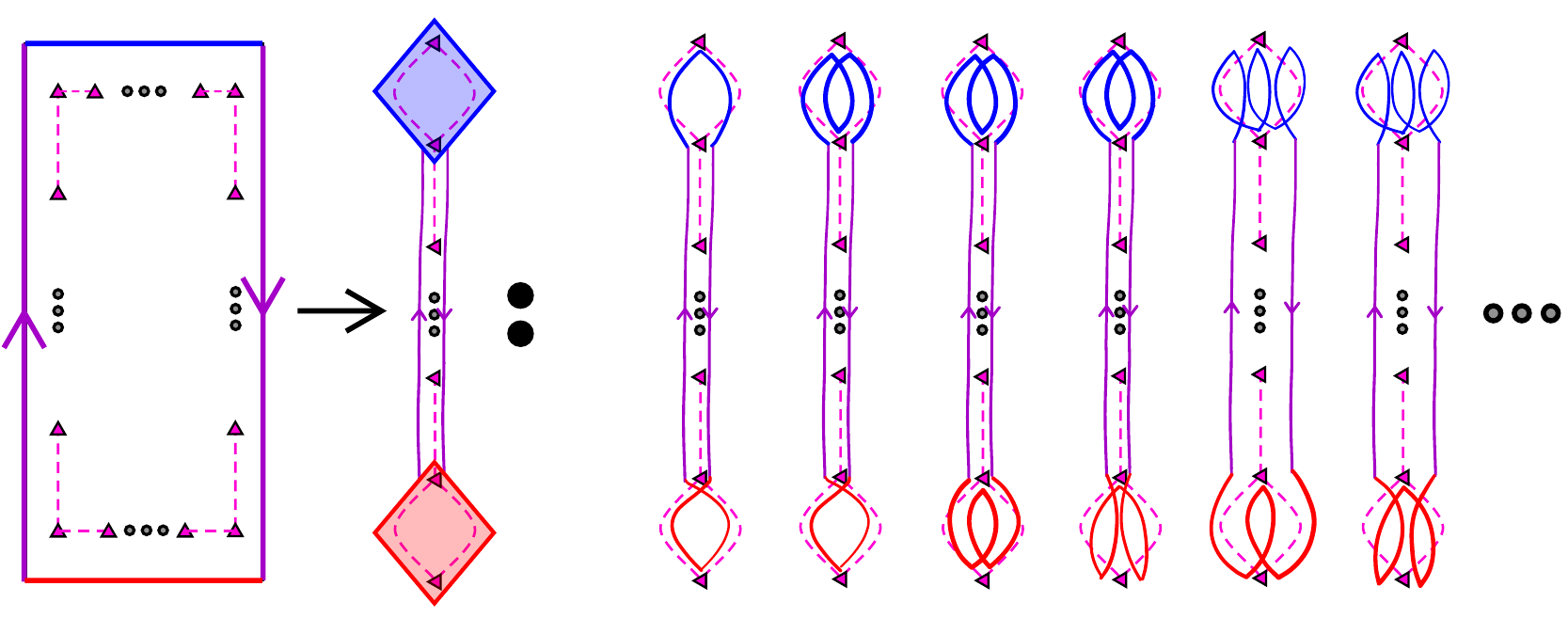}
        \end{figure}
                \begin{figure}[ht]
                \caption{}\label{Fig:generalAsymmetricKinkClosedCurve}
                \centering
                \includegraphics[scale=.4]{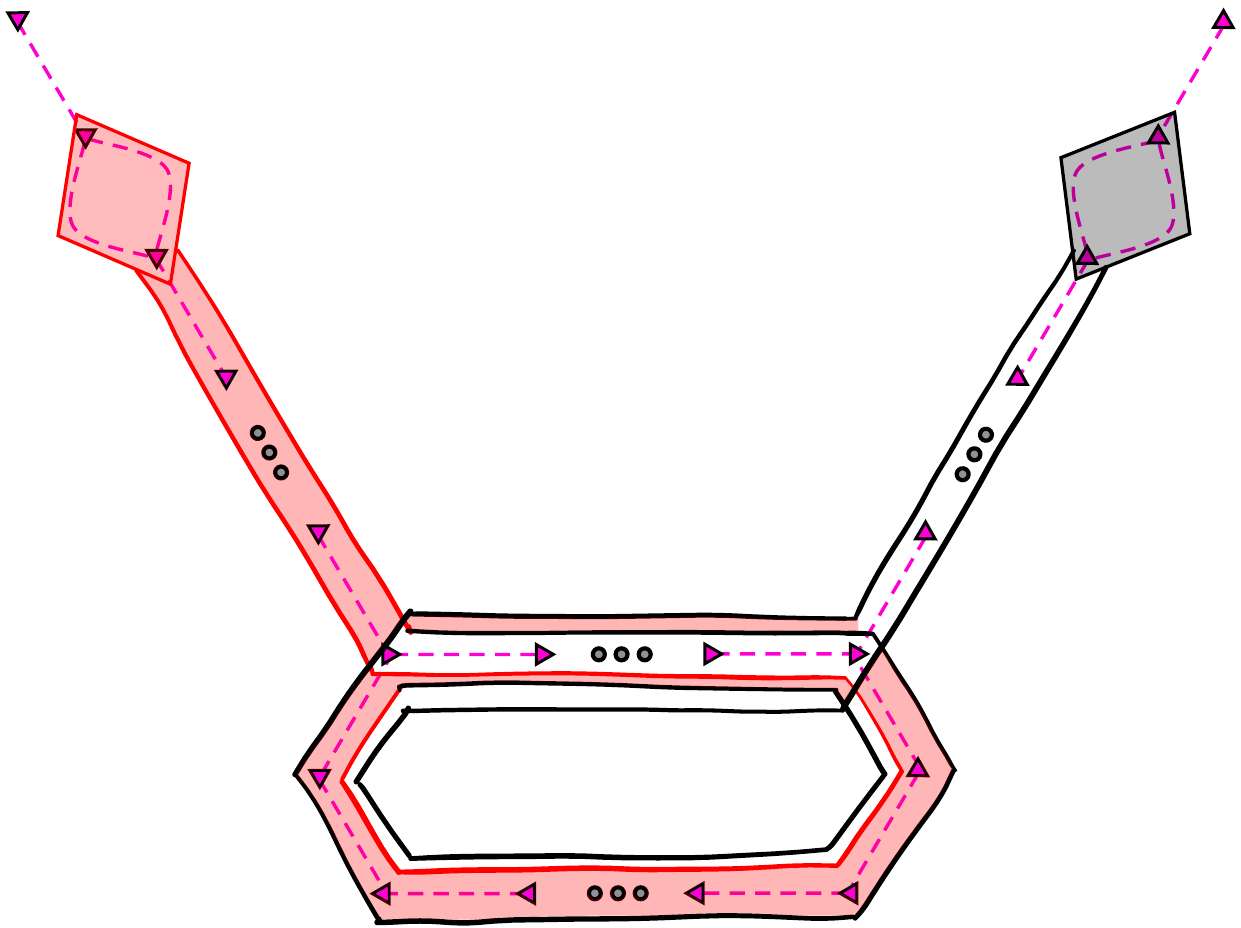}
        \end{figure}

In the situation of Figure \ref{Fig:interferingKinksClosedCurve}, the assertion of the proposition follows easily (it is here that one may have to take opposite orientations of the closed curves). The situation of Figure \ref{Fig:generalAsymmetricKinkClosedCurve} is dealt with in Figure \ref{Fig:kinkResAsymKinkClosedCurve},
                \begin{figure}
                \caption{}\label{Fig:kinkResAsymKinkClosedCurve}
                \centering
                \includegraphics[scale=.18]{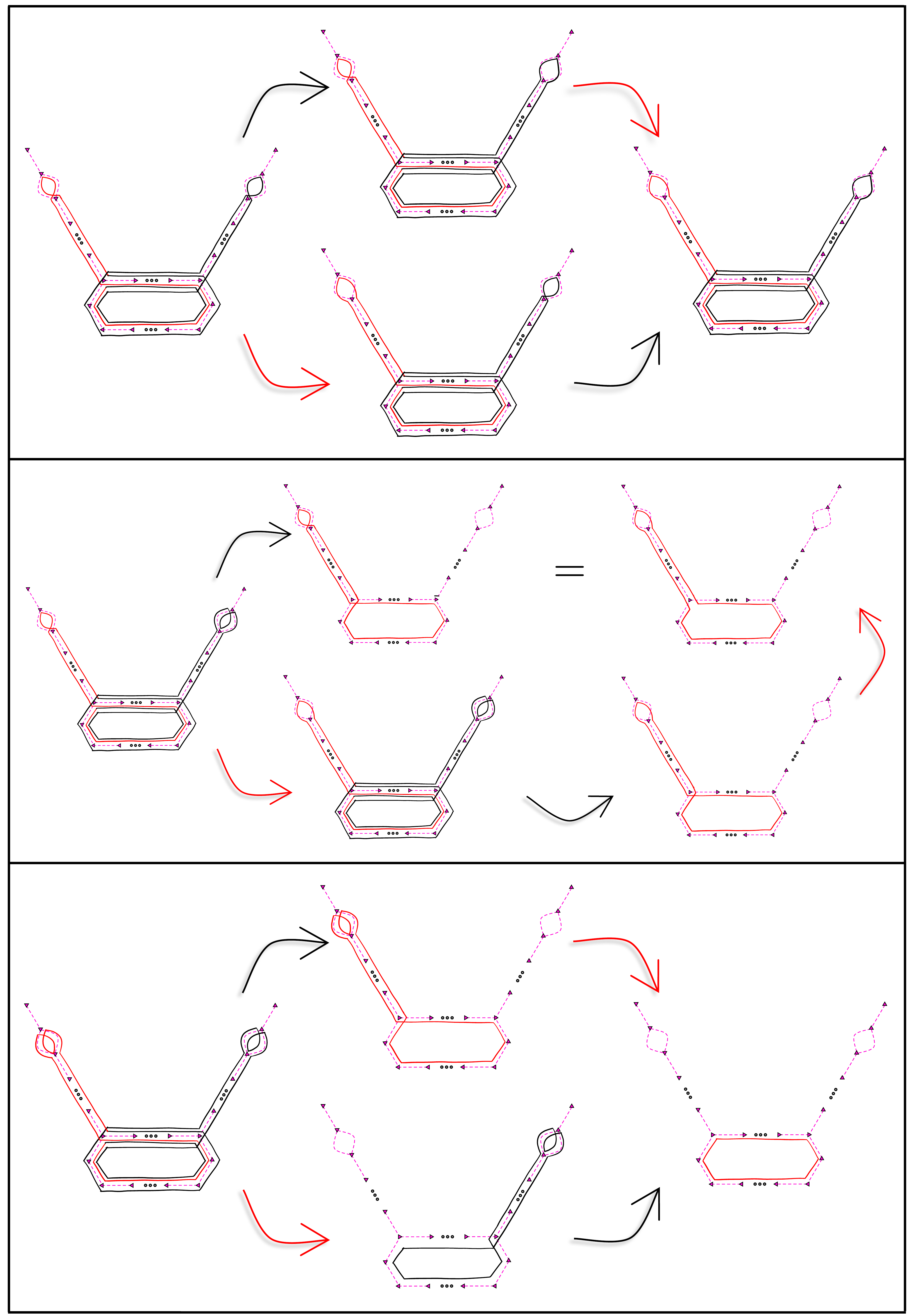}
        \end{figure}
where the assertion of the proposition is proved in the cases where the multiplicities of $\kappa_0$ and $\kappa_0'$ are less than $3$. The proposition is very easy to verify in the cases where the multiplicities are at least $3$ (see Lemma \ref{lemma:kink-resolution-makes-multiplicity-drop}).
\end{proof}

\begin{remark}\label{rem:meaning_of_antipodal}
    Consider a rectangle $Q$, made into a graph in such a way that the number of edges constituting the left vertical side of $Q$ is equal to the number of edges constituting the right vertical side of $Q$, and each of the horizontal sides of $Q$ is conformed by an even number of edges (the number of edges in the top side of $Q$ not necessarily equal to the number of edges in the bottom side). What we mean by ``$\kappa_0$ and $\kappa_0'$ are images of antipodal segments of $\mathbb{S}^1$'' in the final part of the proof of Proposition \ref{prop:local-confluence-of-kink-resolutions} is that the backtrack-free walk representing $\varphi$ is given by a graph homomorphism $Q\rightarrow \leafyG(\tau)$ as depicted in Figure \ref{Fig:interferingKinksClosedCurve}, i.e., sending the horizontal sides of $Q$ to the respective subsequences $(e_{j+1},\ldots,e_{j+2r+2})=(\eta_{v,i},\eta_{v,k},\ldots,\eta_{v_i},\eta_{v,k})$ of $\kappa_0$ and $\kappa_0'$, and the vertical sides of $Q$ to opposite orientations of the same walk on $\leafyG(\tau)$. Depending on the parities of $\kappa_0$ and $\kappa_0'$, this sometimes has the effect that resolving $\kappa_0$ first and resolving $\kappa_0'$ first results in opposite orientations of the same closed walk on $\leafyG(\tau)$.
\end{remark}

\begin{theorem}\label{thm:global-confluence-of-kink-resolutions}
    Let $\tau$ be a triangulation of signature zero, $\leafyG(\tau)$ be its leafy dual graph, and $f$ be as in in \eqref{eq:f-morfism-in-pi_1(leafyG(tau))} or \eqref{eq:free-homotopy-class-in-pi_1(leafyG(tau))}. Suppose that $(\kappa_1,\ldots,\kappa_{\ell})$ and $(\kappa_1',\ldots,\kappa_{\ell'}')$ are finite sequences of kinks such that:
    \begin{enumerate}
        \item for each $t=1,\ldots,\ell$, $\kappa_t$ is a kink of $\rho_{\kappa_{t_1}}\rho_{\kappa_{t-2}}\cdots\rho_{\kappa_0}(f)$;
        \item for each $t=1,\ldots,\ell'$, $\kappa'_t$ is a kink of $\rho_{\kappa_{t_1}'}\rho_{\kappa_{t-2}'}\cdots\rho_{\kappa_0'}(f)$;
        \item neither $\rho_{\kappa_\ell}\cdots\rho_{\kappa_1}\rho_{\kappa_0}(f)$ nor $\rho_{\kappa'_{\ell'}}\cdots\rho_{\kappa'_1}\rho_{\kappa'_0}(f)$ has kinks.
    \end{enumerate}
    Then: 
    \begin{itemize}
    \item $\rho_{\kappa_\ell}\cdots\rho_{\kappa_1}\rho_{\kappa_0}(f)=\rho_{\kappa'_{\ell'}}\cdots\rho_{\kappa'_1}\rho_{\kappa'_0}(f)$ if $\varphi$ is a morphism;
    \item $\rho_{\kappa_\ell}\cdots\rho_{\kappa_1}\rho_{\kappa_0}(\varphi)$ is equal to $\rho_{\kappa'_{\ell'}}\cdots\rho_{\kappa'_1}\rho_{\kappa'_0}(\varphi)$ or its opposite orientation if $\varphi$ is a free-homotopy class as in \eqref{eq:free-homotopy-class-in-pi_1(leafyG(tau))}.
    \end{itemize}
\end{theorem}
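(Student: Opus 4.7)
The plan is to apply Newman's Lemma (the abstract-rewriting version of the Diamond Lemma) to the rewriting system whose single-step reduction is $f \rightsquigarrow \rho_\kappa(f)$, for $\kappa$ a kink of~$f$. Both required ingredients are already established earlier in the paper: Proposition~\ref{prop:contraction-after-partial-resolution-of-kink} provides termination via a decreasing invariant, and Proposition~\ref{prop:local-confluence-of-kink-resolutions} provides local confluence. The statement of the theorem is, by design, a direct corollary of these two facts once packaged correctly.

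First I would make the rewriting system precise. In the morphism case, let $R$ be the binary relation on backtrack-free walks on $\leafyG(\tau)$ defined by $f \,R\, g$ iff $g=\rho_\kappa(f)$ for some kink $\kappa$ of~$f$. In the free-homotopy case, I would first pass to the quotient of rotational equivalence classes by the further equivalence relation that identifies each closed backtrack-free walk with its opposite orientation, and let $R$ be induced by $[f]\mapsto [\rho_\kappa(f)]$ on this quotient; this is exactly the setting in which Proposition~\ref{prop:local-confluence-of-kink-resolutions}(3') becomes an honest equality. The $R$-normal elements are by definition the kink-free walks.

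Second, I would verify termination. Define the weight of $f$ to be its number of kinks counted with multiplicity. By Proposition~\ref{prop:contraction-after-partial-resolution-of-kink}, this weight drops strictly under each application of $R$, so every $R$-chain starting at $f$ has length bounded by the weight of~$f$. In particular, $R$ is well-founded. Combined with the fact that if $f$ still has a kink then some $R$-reduction applies, this shows that every maximal $R$-chain from $f$ terminates at an $R$-normal element in finitely many steps; this covers the existence and finiteness implicit in the hypotheses of the theorem.

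Third, I would invoke Newman's Lemma. Proposition~\ref{prop:local-confluence-of-kink-resolutions} asserts that whenever $\kappa_0 \neq \kappa'_0$ are distinct kinks of $\varphi$, the elements $\rho_{\kappa_0}(\varphi)$ and $\rho_{\kappa'_0}(\varphi)$ can be joined by finite $R$-chains (modulo opposite orientation in the free-homotopy case). This is exactly local confluence of $R$. Newman's Lemma then guarantees global confluence of $R$: any two $R$-chains starting at the same element and ending at $R$-normal elements must end at the same $R$-normal element. Specializing to the two chains $f\rightsquigarrow\cdots\rightsquigarrow\rho_{\kappa_\ell}\cdots\rho_{\kappa_0}(f)$ and $f\rightsquigarrow\cdots\rightsquigarrow\rho_{\kappa'_{\ell'}}\cdots\rho_{\kappa'_0}(f)$ from the statement, whose targets are kink-free by hypothesis (3), yields the desired equality of the two terminal walks in the morphism case, and equality up to orientation reversal in the free-homotopy case.

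The main obstacle has already been overcome in the previous section with the rather delicate case analysis behind Proposition~\ref{prop:local-confluence-of-kink-resolutions}; what remains here is only to formalize the rewriting setup and confirm that Newman's Lemma applies. The one genuinely subtle point is the orientation bookkeeping in the free-homotopy case, which is handled cleanly by quotienting by reverse orientation \emph{before} applying the lemma so that local confluence becomes a true diamond.
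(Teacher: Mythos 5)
Your proposal is correct and follows essentially the same route as the paper: the paper's proof simply cites Propositions \ref{prop:contraction-after-partial-resolution-of-kink} (termination via the strictly decreasing kink count) and \ref{prop:local-confluence-of-kink-resolutions} (local confluence) and invokes the Diamond Lemma. Your additional care in quotienting by orientation reversal in the free-homotopy case before applying Newman's Lemma is a sensible formalization of what the paper leaves implicit, not a different argument.
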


\begin{proof}
    By Propositions \ref{prop:contraction-after-partial-resolution-of-kink} and \ref{prop:local-confluence-of-kink-resolutions}, resolutions of kinks satisfy the hypotheses of the \emph{Diamond lemma}, see \cite[Lemma 2.4]{huet1980confluent}, \cite[Theorem 3]{newman1942on} or \cite[Theorem 1.7.10]{sapir2014combinatorial}. The result follows.
\end{proof}

\section{Kinks of curves via kinks of walks on graphs}\label{sec:kinks-of-curves}

\begin{defi}\label{def:curve-adapted-to-a-ribbon-surface}
A continuous function $\gamma:[0,1]\rightarrow\Sigma(\leafyG(\tau))$ connecting valency-$1$ vertices of $G(\tau)$ (resp. continuous closed curve $\gamma:\mathbb{S}^1\rightarrow\Sigma(\leafyG(\tau))$) is called a \emph{curve adapted to the ribbon structure of $\Sigma(\leafyG(\tau))$} if there is a finite partition $0=t_0<t_1<\ldots <t_l=1$ of the interval $[0,1]$ such that
\begin{enumerate}
    \item for $i=1,\ldots,l$, there is an edge $e_i$ of $\leafyG(\tau)$ such that the image of the open interval $(t_{i-1},t_i)$ (resp. the circle segment $e^{2\pi i (t_{i-1},t_i)}$) under $\gamma$ is entirely contained in the interior of $R_{e_i}$;
    \item for $i=2,\ldots,l$, the edges $e_{i-1}$ and $e_i$ are distinct;
    \item for $i=1,\ldots,l$, the image $\gamma([t_{i-1},t_i])\subseteq R_{e_i}$ (resp. $\gamma(e^{2\pi i[t_{i-1},t_i]})\subseteq R_{e_i}$) has exactly one element in common with each fiber of the map $\varrho_{e_i}:R_{e_i}\rightarrow e_i$ introduced in the proof of Theorem \ref{thm:strong-def-retraction-to-leafy-dual-graph};
    \item for $i,j=1,\ldots,l$, if $i\neq j$, then the intersection $\gamma([t_{i-1},t_i])\cap\gamma([t_{j-1},t_j])$ (resp. $\gamma(e^{2\pi i[t_{i-1},t_i]})\cap \gamma(e^{2\pi i[t_{j-1},t_j]})$) is a finite set.
\end{enumerate}
\end{defi}

Notice that if $\gamma:[0,1]\rightarrow\Sigma(\leafyG(\tau))$ is a curve adapted to the ribbon structure of $\Sigma(\leafyG(\tau))$, then the sequence $(\gamma(0),e_1,e_2,\ldots,e_l,\gamma(1))$ is a morphism written in standard form in the fundamental groupoid $\pi_1(\leafyG(\tau))$.

The next result can be seen to be a consequence of Theorems \ref{thm:morphisms-represented-by-walks-on-graphs} and \ref{thm:strong-def-retraction-to-leafy-dual-graph}. Alternatively, it follows from \cite{hass1985intersections}.

\begin{theorem}\label{thm:every-curve-is-homotopic-to-an-adapted-curve} Let $\tau$ be a signature-zero triangulation, and let $\varrho:\Sigma(\leafyG(\tau))\rightarrow\leafyG(\tau)$ be the strong deformation retraction defined in the proof of Theorem \ref{thm:strong-def-retraction-to-leafy-dual-graph}.
\begin{enumerate}
\item Every continuous curve $\delta:[0,1]\rightarrow\Sigma(\leafyG(\tau))$ connecting valency-$1$ vertices of $G(\tau)$ is homotopic $\operatorname{rel}$ $\{0,1\}$ to a curve $\gamma:[0,1]\rightarrow\Sigma(\leafyG(\tau))$ adapted to the ribbon structure of $\Sigma(\leafyG(\tau))$, say, with respect to a partition $0=t_0<t_1<\ldots <t_l=1$ of the interval $[0,1]$. Furthermore, the sequence $(\gamma(0),e_1,e_2,\ldots,e_l,\gamma(1))$ from Definition \ref{def:curve-adapted-to-a-ribbon-surface} is a backtrack-free walk that coincides with the standard form of $\varrho\delta$ and is, thus, uniquely determined by $\delta$.
    \item Every continuous closed curve $\delta:\mathbb{S}^1\rightarrow\Sigma(\leafyG(\tau))$ is freely homotopic to a closed curve $\gamma:\mathbb{S}^1\rightarrow\Sigma(\leafyG(\tau))$ adapted to the ribbon structure of $\Sigma(\leafyG(\tau))$, say, with respect to a partition $0=t_0<t_1<\ldots <t_l=1$ of the interval $[0,1]$. 
    Furthermore, the sequence $(\gamma(1),e_1,e_2,\ldots,e_l,\gamma(1))$ from Definition \ref{def:curve-adapted-to-a-ribbon-surface}  is a closed backtrack-free walk whose rotational equivalence class
    coincides with the standard form of $\varrho\delta$ and is, thus, uniquely determined by $\delta$.
\end{enumerate}
\end{theorem}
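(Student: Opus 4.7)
My plan is to address part (1) in detail and indicate the modifications needed for part (2). The strategy is to put $\delta$ in general position with respect to the hexagonal cell decomposition of $\ribsurf{\leafyG(\tau)}$ given by the ribbons $\{R_e\}_{e}$, eliminate backtrackings, and then straighten the curve inside each hexagon.

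First I would homotope $\delta$ rel $\{0,1\}$ to a piecewise-smooth curve that avoids all the valency-$3$ vertices of $\leafyG(\tau)$ and meets each of the glued boundary arcs between adjacent hexagons $R_e, R_{e'}$ transversally in finitely many points; this is classical transversality on a surface. The result is a curve crossing a finite sequence of hexagons $R_{f_1},\ldots,R_{f_m}$. If $f_i=f_{i+1}$ for some $i$, then the sub-arc of $\delta$ between the two consecutive crossings enters and exits the same shared arc, and together with a sub-arc of that arc bounds an embedded disk in $R_{f_i}$; a relative homotopy pushing this disk across the arc eliminates the backtracking and decreases $m$. After finitely many such moves, $f_i\neq f_{i+1}$ for all $i$. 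Within each remaining $R_{f_i}$, a further rel-endpoints homotopy replaces the sub-arc by a straight-line segment meeting each fiber of $\varrho_{f_i}$ in exactly one point, and a generic final perturbation ensures finite pairwise intersections of the straightened segments. The resulting curve $\gamma$ satisfies Definition \ref{def:curve-adapted-to-a-ribbon-surface}.

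To see that the associated walk $(\gamma(0),e_1,\ldots,e_l,\gamma(1))$ coincides with $\phi(\varrho\delta)$, note that it is backtrack-free by construction and that $\varrho\circ\gamma$ traces each edge $e_i$ monotonically, because $\gamma$ meets each fiber of $\varrho_{e_i}$ exactly once; hence the walk represents the morphism $[\varrho\gamma]\in\pi_1(\leafyG(\tau))$. Since $\gamma\simeq\delta$ rel $\{0,1\}$ we have $[\varrho\gamma]=[\varrho\delta]$, and the uniqueness statement in Theorem \ref{thm:morphisms-represented-by-walks-on-graphs}(1) forces this walk to equal the standard form $\phi(\varrho\delta)$; in particular it depends only on $\delta$. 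For part (2), essentially the same argument applies to closed curves up to free homotopy, using Theorem \ref{thm:morphisms-represented-by-walks-on-graphs}(2); the disk-pushing step for backtracking elimination is unchanged, and the resulting closed walk is determined up to rotational equivalence by $\delta$.

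I expect the main obstacle to be verifying that the iterative elimination of backtrackings terminates and does not spoil the adapted conditions on segments that have already been processed (for instance, the monotonicity condition on fibers, or the finiteness of pairwise intersections). One clean way around this is to invoke \cite{hass1985intersections}, as the statement itself suggests, to obtain an intersection-minimizing representative of the homotopy class directly, automatically in general position with respect to the cell decomposition. Alternatively, one can perform all of the straightening simultaneously at the very end, after backtracking elimination has reached a fixed point, and handle finiteness of pairwise intersections by a single transversality perturbation applied to the already backtrack-free curve.
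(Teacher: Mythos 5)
Your overall strategy (general position with respect to the hexagonal decomposition, removal of inessential crossings, straightening inside each hexagon, then uniqueness via Theorem \ref{thm:morphisms-represented-by-walks-on-graphs}) is the natural way to fill in what the paper leaves as a consequence of Theorems \ref{thm:morphisms-represented-by-walks-on-graphs} and \ref{thm:strong-def-retraction-to-leafy-dual-graph}, or of \cite{hass1985intersections}. However, your reduction step is incomplete, and the gap sits exactly where your straightening step would fail. At a valency-$3$ vertex $u$ the three hexagons $R_e$, $R_{e^{+,u}}$, $R_{e^{-,u}}$ are glued pairwise along three distinct arcs near $u$. Your move only removes a sub-arc that enters and exits a hexagon through the \emph{same} glued arc. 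It does not remove a sub-arc that enters $R_e$ through the arc shared with one of the other two hexagons at $u$ and leaves through the arc shared with the remaining one: such a ``corner-cutting'' sub-arc has both endpoints near the $u$-end of $R_e$, so no homotopy rel endpoints inside $R_e$ can make it meet every fiber of $\varrho_e$ exactly once (condition (3) of Definition \ref{def:curve-adapted-to-a-ribbon-surface}). Note also that for a transversal curve consecutive itinerary entries are automatically distinct (each glued arc separates two distinct hexagons), so your literal test ``$f_i=f_{i+1}$'' never fires and in particular does not detect this configuration. Worse, if such a visit survived into the itinerary, the corresponding ``walk'' would have to traverse $e$ from $u$ back to $u$, impossible in the loop-free graph $\leafyG(\tau)$; and this visit is precisely a segment that cancels in the standard form of $\varrho\delta$, so the final identification with $\phi(\varrho\delta)$ via uniqueness would break.

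The repair is easy but must be stated: add a second move pushing a corner-cutting sub-arc across the vertex region at $u$, out of $R_e$ and across the glued arc joining the other two hexagons; this trades two intersection points with the union of glued arcs for one, so an induction on the number of intersections with that $1$-complex makes both moves terminate together. Only after both types of moves are exhausted does every remaining sub-arc run from one end of its hexagon to the opposite end (or to a valency-$1$ vertex), and only then are the in-hexagon straightening and the monotonicity of $\varrho\circ\gamma$ on each edge justified; the rest of your argument ($[\varrho\gamma]=[\varrho\delta]$, uniqueness of the standard form, and the rotational-equivalence version for closed curves) then goes through. Two minor points: the disk you push need not be embedded when $\delta$ has self-intersections, so either take an innermost such sub-arc or just use simple connectedness of the hexagon to homotope the sub-arc into the glued arc rel endpoints; and if you prefer the \cite{hass1985intersections} shortcut, you still have to observe that a representative minimizing intersections with the union of glued arcs has neither same-arc returns nor corner cuts, which is the same pair of observations.
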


\begin{defi}\label{def:kink-of-curve-wrt-triangulation-of-signature-zero} Let $\tau$ be a signature-zero triangulation, and let $\varrho:\Sigma(\leafyG(\tau))\rightarrow\leafyG(\tau)$ be the strong deformation retraction defined in the proof of Theorem \ref{thm:strong-def-retraction-to-leafy-dual-graph}.
Let 
$\gamma:[0,1]\rightarrow\Sigma(\leafyG(\tau))$ (resp. $\gamma:\mathbb{S}^1\rightarrow\Sigma(\leafyG(\tau))$) be a curve adapted to the ribbon structure of $\Sigma(\leafyG(\tau))$ with respect to a partition $0=t_0<t_1<\ldots <t_l=1$ of the interval $[0,1]$. A \emph{kink of $\gamma$ with respect to $\tau$} is 
a segment $\gamma|_{[t_j,t_m]}:[t_j,t_m]\rightarrow\Sigma(\leafyG(\tau))$ (resp. $\gamma|_{e^{2\pi i [a,b]}}:e^{2\pi i [a,b]}\rightarrow\Sigma(\leafyG(\tau))$) such that the subsequence $(e_j,\ldots,e_m)$ of $\phi(\varrho\delta)$ it determines is a kink of $\phi(\varrho\delta)$.
\end{defi}

\begin{proposition}\label{prop:having-kinks-is-independent-of-triangulation}
    Suppose that $\tau$ and $\sigma$ are triangulations of signature zero. Let $\gamma_1$ and $\gamma_2$ be curves adapted to the ribbon structures of $\Sigma(\leafyG(\tau))$ and $\Sigma(\leafyG(\sigma))$, respectively. If $\iota_\tau(\gamma_1)$ and $\iota_\sigma(\gamma_2)$ are homotopic $\operatorname{rel}$ $\{0,1\}$ (resp. freely homotopic) in $\Sigma$ (see \eqref{eq:embedding-of-ribbon-surfaces}), then $\gamma_1$ has at least one kink if and only if $\gamma_2$ has at least one kink.
\end{proposition}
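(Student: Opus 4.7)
The plan is to establish a topological characterization of ``having a kink'' that is manifestly independent of the triangulation, and then invoke it together with the homotopy hypothesis (which I read, consistently with the rest of the paper, as homotopy in $\Sigma\setminus\punct$). The first step is to unpack the combinatorial definition of a kink locally. A kink of multiplicity $r+1\geq 2$ at the self-folded triangle $v$ (enclosing puncture $p_v$) corresponds to a sub-segment $\alpha$ of $\iota_\tau(\gamma_1)$ which enters the self-folded region through its unique exterior arc of $\tau$, winds $r+1$ times around $p_v$ inside the disk bounded by that arc, and exits through the same arc, without any other sub-segment of $\iota_\tau(\gamma_1)$ crossing the exterior arc in between. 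A multiplicity-$1$ kink is analogous but comes equipped with additional radial ``detour'' sub-arcs on each side; the sign condition from item \ref{item:what-is-a-kink-on-a-graph-crucial-signs-multiplicity-1} of Definition \ref{def:what-is-a-kink-on-a-graph} forces the detour sub-arcs to curve consistently, so that the winding sub-arc together with its detours is isolable inside a slightly larger embedded disk around $p_v$.

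From this local description I would extract the following triangulation-independent characterization: $\gamma_1$ has at least one kink with respect to $\tau$ if and only if there exist an embedded closed disk $D\subset\Sigma$ whose interior contains a unique puncture $p\in\punct$, and a sub-arc (or loop segment) $\alpha$ of $\iota_\tau(\gamma_1)$ such that $\alpha\subset D$, both endpoints of $\alpha$ lie on $\partial D$, the relative homotopy class of $\alpha$ in $(D,\partial D)$ is a nontrivial power of a simple loop around $p$, and no other sub-segment of $\iota_\tau(\gamma_1)$ meets the interior of $D$. Both directions of this equivalence require proof. The ``only if'' direction follows from the local description of the preceding paragraph by taking $D$ to be a small enlargement of the self-folded region. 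The ``if'' direction requires analyzing the standard form of the walk $\varrho(\iota_\tau(\gamma_1))$ on $\leafyG(\tau)$ near $p$ and showing that the presence of such a winding sub-arc $\alpha$ forces a combinatorial pattern in the walk that matches Definition \ref{def:what-is-a-kink-on-a-graph}.

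With the triangulation-free characterization in place, the proposition reduces to showing that the property ``$\iota_\tau(\gamma_1)$ contains a sub-arc $\alpha$ as above'' is invariant under homotopy in $\Sigma\setminus\punct$. This follows by a transversality argument: given a homotopy $H\colon [0,1]^2 \to \Sigma\setminus\punct$ between $\iota_\tau(\gamma_1)$ and $\iota_\sigma(\gamma_2)$, a small perturbation makes $H^{-1}(D)$ a compact surface with corners for a suitable small disk $D$ around $p$, and the isolated winding sub-arc $\alpha$ can then be tracked through $H$ to yield the required disk and sub-arc for $\iota_\sigma(\gamma_2)$. Applying the characterization again in the opposite direction, $\gamma_2$ has a kink with respect to $\sigma$.

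The main obstacle I anticipate is the ``if'' direction of the topological characterization for multiplicity-$1$ kinks: the combinatorial definition imposes a specific sign condition that must be read off from purely topological data, requiring a careful case analysis that I would cross-check against the small examples in Example \ref{ex:sign-sequences-on-leafy-graph}. A secondary but routine point is adapting the transversality step to the free-homotopy case, where $\alpha$ is a segment of a circle rather than an interval; here one argues as in Section \ref{sec:kinks-on-graphs} by choosing a basepoint outside the isolated disk and reducing to the rel-endpoints case.
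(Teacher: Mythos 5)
Your argument hinges on the proposed triangulation-free characterization of ``having a kink'', and that characterization is not correct, so there is a genuine gap before the transversality step is ever reached. In the ``if'' direction it already fails for a single essential wrap: the walk $f_2$ of Example \ref{ex:sign-sequences-on-leafy-graph} passes exactly once through the self-folded region enclosing the puncture, so taking $D$ to be a slight enlargement of that region one obtains an isolated sub-arc $\alpha\subset D$ with endpoints on $\partial D$ representing the first (hence nontrivial) power of the loop around the puncture, with no other sub-segment of the curve meeting the interior of $D$; yet $f_2$ has no kink. The underlying problem is that, inside $(D,\partial D)$ with endpoints free to slide on $\partial D$, the wrapping arc of a multiplicity-$1$ kink and an essential single wrap are homotopic -- indeed the portions of the two curves inside a small disk around the self-folded region are combinatorially identical. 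What separates them is the sign condition of Definition \ref{def:what-is-a-kink-on-a-graph}-\eqref{item:what-is-a-kink-on-a-graph-crucial-signs-multiplicity-1}, which records how the two ends of $\alpha$ reconnect \emph{outside} $D$, and therefore cannot be read off from the relative class of $\alpha$ in $D$. If instead you intend ``nontrivial power'' to mean exponent at least $2$, then the ``only if'' direction fails, since a multiplicity-$1$ kink wraps only once around the puncture.

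Your proposed remedy for multiplicity-$1$ kinks -- enlarging $D$ so as to swallow the ``detour'' sub-arcs -- does not repair this. The palindromic segment $e_{j+1},\ldots,e_{j+r}$ in Definition \ref{def:what-is-a-kink-on-a-graph} can traverse ribbons arbitrarily far from the self-folded region (and may itself repeat edges), so the relevant region is not ``a slightly larger embedded disk around $p_v$''; moreover, other strands of the same curve may cross that region, destroying the isolation clause, and a curve passing several times through the same self-folded region makes the clause ``no other sub-segment meets the interior of $D$'' problematic even for kinks of multiplicity at least $2$. Any correct triangulation-free characterization would have to encode this turning data, which is essentially the whole difficulty you were hoping to bypass; the subsequent homotopy-tracking/transversality step is also only sketched, but it is moot until the characterization is fixed. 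For comparison, the paper takes a different and much more economical route: it uses that any two signature-zero triangulations are related by a finite sequence of $\lozenge$-flips, reduces to a single flip, and exhibits, for a given adapted curve $\gamma_1$, an adapted curve $\gamma_2$ for the flipped triangulation by locally replacing the segments of $\gamma_1$ inside the ribbon subsurface affected by the flip (Figures \ref{Fig:standardFlip}, \ref{Fig:doubleFlip} and \ref{Fig:FlipHomeo}), after which preservation of the kink/no-kink status is a finite combinatorial verification; no homotopy-invariant topological criterion is ever needed.
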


\begin{proof} Any two triangulations of $\surf$ of signature zero are related by a finite sequence of $\lozenge$-flips (see \cite[Definition 9.11]{fomin2008cluster}). Hence it suffices to prove the proposition in the case where $\tau$ and $\sigma$ are related by a single $\lozenge$-flip. Furthermore, it is enough to show that, \emph{given} $\gamma_1$, \emph{there exists} a $\gamma_2$ with the properties that $\iota_\tau(\gamma_1)$ and $\iota_\sigma(\gamma_2)$ are homotopic $\operatorname{rel}$ $\{0,1\}$ (resp. freely homotopic) in $\Sigma$, and $\gamma_1$ has at least one kink if and only if $\gamma_2$ has at least one kink.

There are two types of $\lozenge$-flips, namely, the ones sketched in Figures \ref{Fig:standardFlip} and \ref{Fig:doubleFlip}.
        \begin{figure}[ht]
                \caption{}\label{Fig:standardFlip}
                \centering
                \includegraphics[scale=.14]{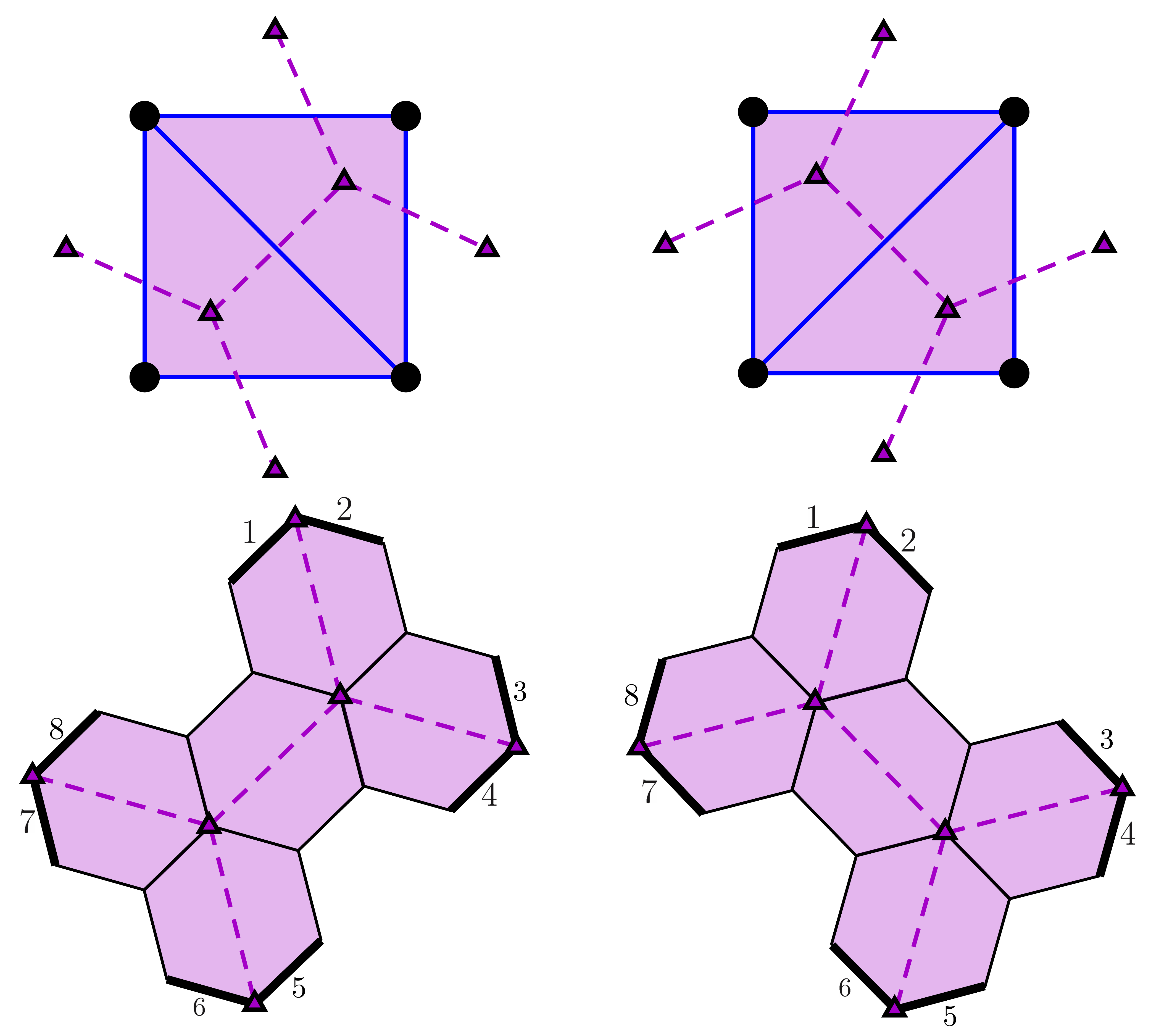}
        \end{figure}
                \begin{figure}[ht]
                \caption{}\label{Fig:doubleFlip}
                \centering
                \includegraphics[scale=.4]{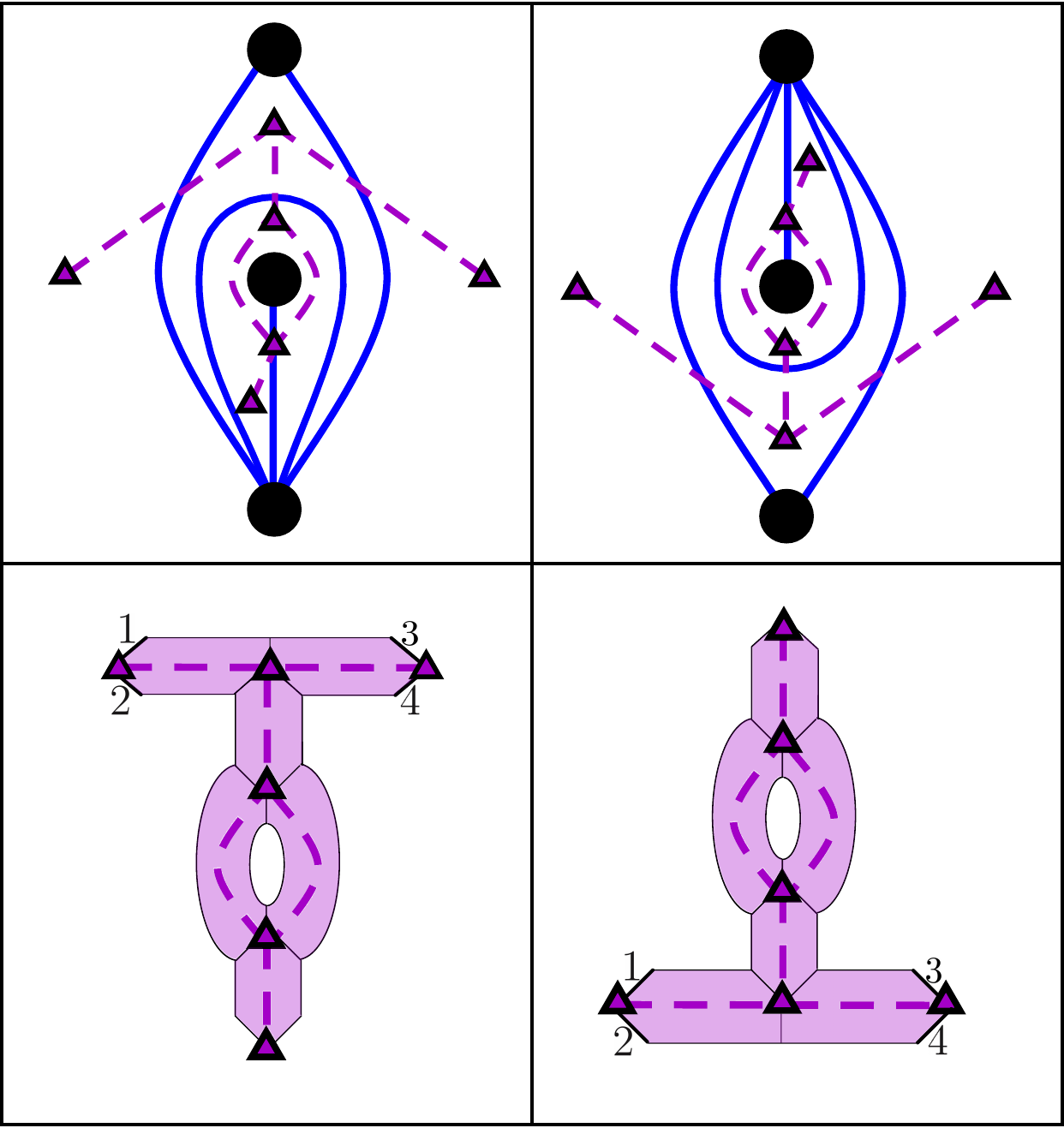}
        \end{figure}
In each of these figures, we have drawn the subgraphs of $\leafyG(\tau)$ and $\leafyG(\sigma)$ conformed by the edges incident to the triangles of $\tau$ and $\sigma$ that contain the arcs involved in the $\lozenge$-flip. We have also drawn the ribbon subsurfaces of $\Sigma(\leafyG(\tau))$ and $\Sigma(\leafyG(\sigma))$ induced by these subgraphs. We shall denote these ribbon subsurfaces by $S(\tau)$ and $S(\sigma)$.

Outside the portions depicted in Figures \ref{Fig:standardFlip} and \ref{Fig:doubleFlip}, $\leafyG(\tau)$ and $\leafyG(\sigma)$ are identical, and $\Sigma(\leafyG(\tau))$ and $\Sigma(\leafyG(\sigma))$ are identical. Thus, in order to exhibit a $\gamma_2$ if we are given $\gamma_1$, it suffices to replace each segment $s$ of $\gamma_1$ contained in $S(\tau)$ by a segment $s'$ contained in $S(\sigma)$, connecting the endpoints of $s$, and satisfying the conditions in Definition \ref{def:curve-adapted-to-a-ribbon-surface}, in such a way that $\iota_\tau(s)$ and $\iota_\sigma(s')$ are homotopic rel $\{0,1\}$ in $\Sigma\setminus(\punct\cup\partial\Sigma)$. This replacement can be carried out combinatorially very easily, and has been sketched in Figure \ref{Fig:FlipHomeo}.
                \begin{figure}[ht]
                \caption{}\label{Fig:FlipHomeo}
                \centering
                \includegraphics[scale=.16]{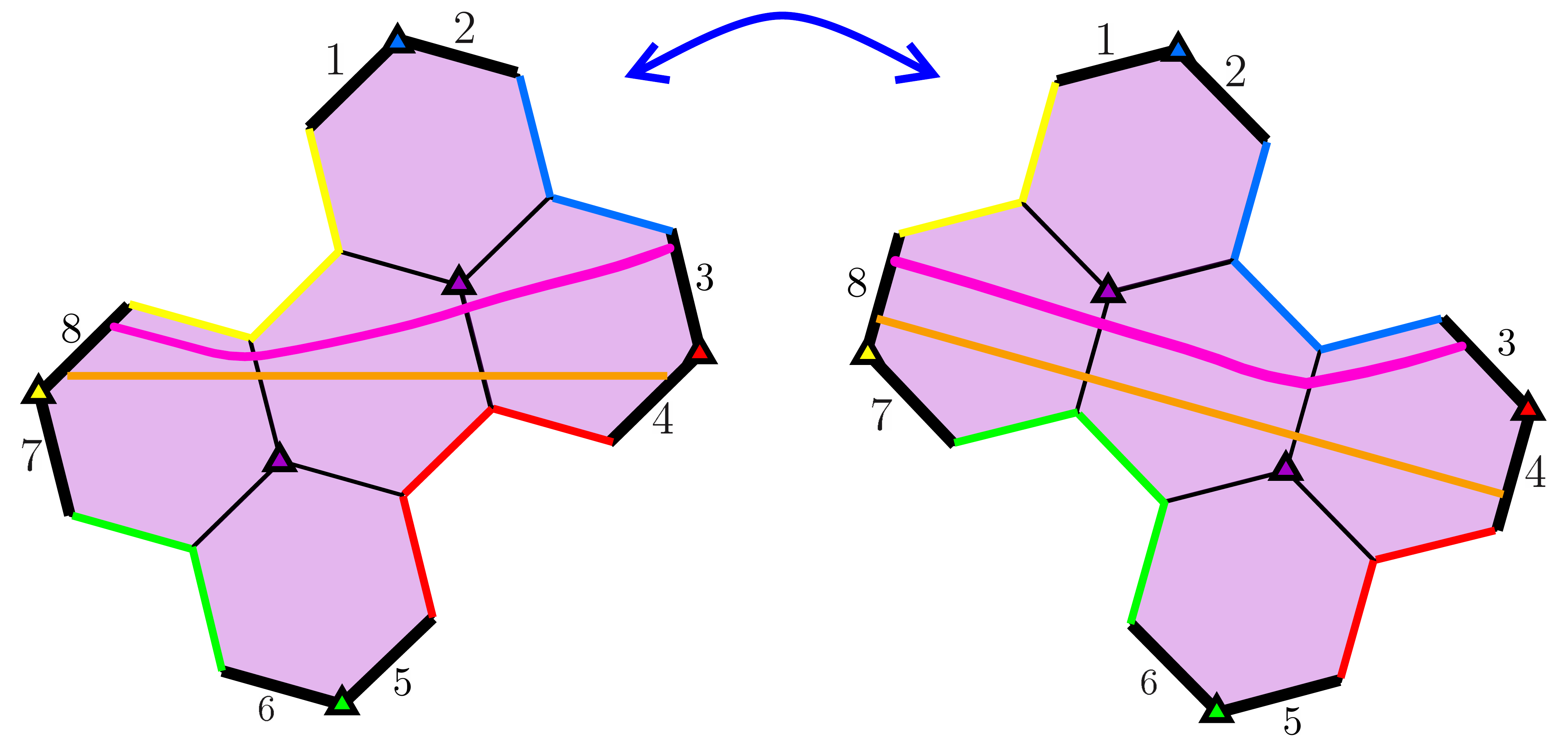} \\
                \includegraphics[scale=.31]{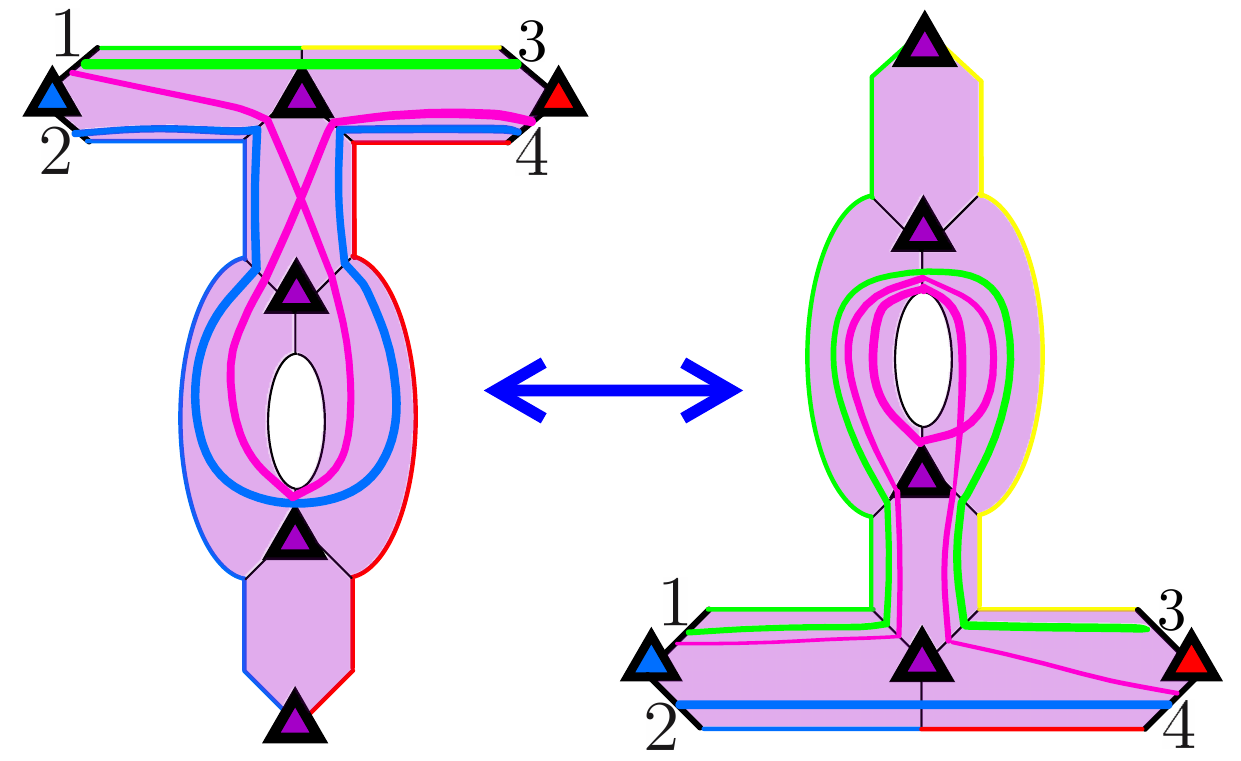}
        \end{figure}

Letting $\gamma_2$ be the result of performing all of the above replacements, it becomes routine to verify that $\gamma_1$ has a kink with respect to $\tau$ if and only if $\gamma_2$ has a kink with respect to~$\sigma$.
\end{proof}

\begin{remark}
    With the current definition of multiplicity of a kink on a (closed) backtrack-free walk on $\leafyG(\tau)$, see Definition \ref{def:what-is-a-kink-on-a-graph}, for kinks of positive multiplicity it is not always true that their multiplicity is invariant under $\lozenge$-flips. However, a suitable refinement of the notion of multiplicity can be given that is invariant under $\lozenge$-flips. It would also be interesting to define the notion of kink with respect to a triangulation not necessarily having signature zero, and proving the corresponding invariance under flips.
\end{remark}

\begin{remark} By the work of Hansper \cite{hansper2022classification}, the two possible notions of \emph{admissible} string or band on a clannish algebra are equivalent, see also \cite{geiss2023onhomomorphism}. Now, it is extremely easy to see that if $\tau$ is a triangulation of signature zero, then the Jacobian algebra $\Lambda(\tau)$, associated to $\tau$ by the second author in \cite{labardini2009quivers}, is a clannish algebra. It turns out that the string or band on $\Lambda(\tau)$ determined by a curve on $\surf$ is admissible if and only if the curve itself has no kinks. 
\end{remark}

\section{Equivalence of orbifold fundamental groupoids}\label{sec:equiv-of-fund-groupoids}

Let $\surf$ be a surface with non-empty boundary. We define a subgroupoid of the fundamental groupoid $\pi_1(\Sigma\setminus\punct)$ as follows. Form a set $E$ by picking exactly one point from each connected component of $(\partial\Sigma)\setminus\marked$.  Define $\pi_1(\Sigma\setminus\punct,E)$ to be the full subcategory of $\pi_1(\Sigma\setminus\punct)$ having $E$ as set of objects.

For each point $u\in E$, each puncture $p\in \punct$, and each continuous function $c:[0,1]\rightarrow \Sigma$ connecting $u$ to $p$ and such that $(\partial\Sigma\cup\punct)\cap c(0,1)=\varnothing$, take a continuous function $\gamma_{u,p,c}:[0,1]\rightarrow \Sigma$ with $\gamma_{u,p,c}(0)=u=\gamma_{u,p,c}(1)$, $(\partial\Sigma\cup\punct)\cap c(0,1)=\varnothing$, closely following $c$ but surrounding $p$ clockwisely instead of going into it. Thus, $\gamma_{u,p,c}$ cuts out a once-punctured monogon based at $u$, the puncture being $p$.

For each $u\in E$ form the set
$$
S(u):=\{[\gamma_{u,p,c}]^2\suchthat p\in\punct \ \text{and} \ c \ \text{is as above}\}\subseteq \pi_1(\Sigma\setminus\punct,E)(u,u)= \pi_1(\Sigma\setminus\punct)(u,u).
$$
Although $S(u)$ fails to be a subgroup of $\pi_1(\Sigma\setminus\punct,E)(u,u)$, 
the collection $S:=(S(u))_{u\in E}$ has the property that for every $u_0,v_0\in E$, every $h\in S(u_0)$ and every morphism
$g\in\pi_1(\Sigma\setminus\punct,E)(x,y)$, we have $ghg^{-1}\in S(v_0)$. Thus, if we let $H(u)$ be the subgroup of $\pi_1(\Sigma\setminus\punct,E)(u,u)$ generated by $S(u)$, then $H:=(H(u))_{u\in E}$ is a normal multilocular subgroup of $\pi_1(\Sigma\setminus\punct,E)$.

\begin{defi}\cite{chas2016the,amiot2021the}
    The quotient groupoid
    $$
    \pi_{1,\punct}^{\operatorname{orb}}(\Sigma,E):=\pi_1(\Sigma\setminus\punct,E)/H
    $$
    will be called the \emph{$2$-orbifold fundamental groupoid} of $\surf$.
\end{defi}

Let $\tau$ be a triangulation of $\surf$ having signature zero.
Each point $u$ in $E$ belongs to exactly one boundary segment $b_u$ of $\Sigma$. This segment $b_u$ is a vertex of both of the graphs $G(\tau)$ and $\leafyG(\tau)$. We will identify each $u$ as the vertex $b_u$ whenever we consider either of these graphs. Under this identification, we define $\pi_1(\leafyG(\tau),E)$ to be the full subcategory of $\pi_1(\leafyG(\tau))$ having $E$ as set of objects.

For each vertex $u\in E$, each self-folded triangle $v$ of $\tau$ and each morphism $c\in \leafyG(\tau)(u,v)$, set $\gamma_{u,v,c}:=c*(\eta_{v,1},\eta_{v_2})*c^{-1}=c^{-1}\circ (\eta_{v,1},\eta_{v_2})\circ c$. For each $u\in E$ form the set
$$
R(u):=\{\gamma_{u,v,c}^2\suchthat v \ \text{is as above} \ \text{and} \ c\in \leafyG(\tau)(u,v)\}\subseteq \pi_1(\leafyG(\tau),E)(u,u)=\pi_1(\leafyG(\tau))(u,u).
$$
Although $R(u)$ fails to be a subgroup of $\pi_1(\leafyG(\tau),E)(u,u)$, 
the collection $R:=(R(u))_{u\in E}$ has the property that for every $u_0,v_0\in E$, every $h\in R(u_0)$ and every morphism
$g\in\pi_1(\leafyG(\tau),E)(x,y)$, we have $ghg^{-1}\in R(v_0)$. Thus, if we let $K(u)$ be the subgroup of $\pi_1(\Sigma\setminus\punct,E)(u,u)$ generated by $R(u)$, then $K:=(K(u))_{u\in E}$ is a normal multilocular subgroup of $\pi_1(\leafyG(\tau),E)$.

\begin{defi}\label{def:orb-fund-groupoid-of-leafyG}
    The quotient groupoid
    $$
    \pi_{1,\punct}^{\operatorname{orb}}(\leafyG(\tau),E):=\pi_1(\leafyG(\tau),E)/K
    $$
    will be called the \emph{$2$-orbifold fundamental groupoid} of the graph $\leafyG(\tau)$.
\end{defi}

The proof of the next result follows from the definitions.

\begin{theorem}\label{thm:comm-diagram-orb-fund-groupoids}
Let $\surf$ be a surface with non-empty boundary, and
let $\tau$ be an ideal triangulation of $\surf$ of signature zero. The strong deformation retraction $\rho:\Sigma\setminus\punct\rightarrow \leafyG(\tau)$ from Theorem \ref{thm:strong-def-retraction-to-leafy-dual-graph} induces a commutative diagram of groupoids
$$
\xymatrix{\pi_1(\Sigma\setminus\punct,E) \ar[d]_{\mathfrak{p}} \ar[r]^{\rho_{\#}}_{\cong} & \pi_1(\leafyG(\tau),E) \ar[d]^{\mathfrak{p}} \\
\pi_{1,\punct}^{\operatorname{orb}}(\Sigma,E) \ar[r]_{\overline{\rho_{\#}}}^{\cong} & \pi_{1,\punct}^{\operatorname{orb}}(\leafyG(\tau),E)
}
$$
whose horizotal arrows are isomorphisms.
\end{theorem}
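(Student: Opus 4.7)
The plan is to prove the statement in two stages: first establish the top horizontal arrow $\rho_\#$ as an isomorphism of groupoids, then verify that $\rho_\#$ identifies the normal multilocular subgroup $H$ defining $\pi_{1,\punct}^{\operatorname{orb}}(\Sigma,E)$ with the normal multilocular subgroup $K$ defining $\pi_{1,\punct}^{\operatorname{orb}}(\leafyG(\tau),E)$, so that the first isomorphism theorem from Section~\ref{sec:background} automatically supplies the lower arrow $\overline{\rho_\#}$ and forces it to be an isomorphism. The top arrow is obtained from the strong deformation retraction $\rho$ of Theorem~\ref{thm:strong-def-retraction-to-leafy-dual-graph} by the standard homotopy invariance of the fundamental groupoid; under the identification $u\leftrightarrow b_u$ of each basepoint $u\in E$ with its ambient boundary segment (viewed as a valency-one vertex of $\leafyG(\tau)$), $\rho$ is the identity on $E$, so the functor $\rho_\#$ restricts to a fully faithful functor between the $E$-basepointed subcategories that is the identity on objects, hence an isomorphism.

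The heart of the argument is the verification that $\rho_\#(H)=K$. Since $H$ and $K$ are the normal multilocular subgroups generated by $S$ and $R$ respectively, it suffices to check the correspondence on generators. Given $u\in E$, a puncture $p\in\punct$ and a connecting arc $c$ as in the definition of $S(u)$, let $v$ be the self-folded triangle of $\tau$ enclosing $p$ and let $c'\in\leafyG(\tau)(u,v)$ be the backtrack-free walk obtained by applying $\rho$ to an initial sub-arc of $c$ and extending to $v$. A small once-punctured disk neighborhood of $p$ contained in the interior of the self-folded triangle enclosing $p$ deformation retracts, via $\rho$, onto the closed walk $(\eta_{v,1},\eta_{v,2})$ based at $v$; this is the classical fact that a punctured disk retracts onto its boundary, transported through $\rho$ to the ribbon-graph setting (in $G(\tau)$ the relevant boundary is the loop $\eta_v$, which in $\leafyG(\tau)$ is split precisely into the two edges $\eta_{v,1}$ and $\eta_{v,2}$). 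Consequently $\rho\circ\gamma_{u,p,c}$ is homotopic rel $\{0,1\}$ in $\leafyG(\tau)$ to $c'*(\eta_{v,1},\eta_{v,2})*(c')^{-1}=\gamma_{u,v,c'}$, so that $\rho_\#\bigl([\gamma_{u,p,c}]^2\bigr)=\gamma_{u,v,c'}^2\in R(u)\subseteq K(u)$. The reverse containment $(\rho_\#)^{-1}(R(u))\subseteq H(u)$ is obtained symmetrically by realizing $\gamma_{u,v,c'}$ as a curve in $\Sigma\setminus\punct$ through the embedding $\iota_\tau$ of~\eqref{eq:embedding-of-ribbon-surfaces} and noting that it is homotopic rel $\{0,1\}$ in $\Sigma\setminus\punct$ to $\gamma_{u,p,c}$ for a suitable $c$.

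Once $\rho_\#(H)=K$ has been established, the rest is formal: the composite $\mathfrak{p}\circ\rho_\#:\pi_1(\Sigma\setminus\punct,E)\rightarrow\pi_{1,\punct}^{\operatorname{orb}}(\leafyG(\tau),E)$ is a full functor whose multilocular kernel coincides with $H$, so the first isomorphism theorem of Section~\ref{sec:background} furnishes a unique functor $\overline{\rho_\#}$ making the square commute, and this $\overline{\rho_\#}$ is fully faithful and bijective on objects, hence an isomorphism of groupoids. The main obstacle I expect is the topological assertion that $\rho\circ\gamma_{u,p,c}$ is homotopic rel $\{0,1\}$ to $\gamma_{u,v,c'}$: it requires unpacking the gluing of the hexagons $R_{\eta_{v,1}}$, $R_{\eta_{v,2}}$, $R_{\ell_v}$ at the vertex $w_v$ and locating $p$ in the complement of $\ribsurf{\leafyG(\tau)}$ inside the self-folded triangle of $\tau$, so as to confirm that a single wrap around $p$ retracts precisely to the closed walk $(\eta_{v,1},\eta_{v,2})$ at $v$, and not to a different loop of the ribbon graph (such as one traversing the leaf $\ell_v$).
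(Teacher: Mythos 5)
Your argument is correct and is essentially the intended one: the paper dismisses this theorem with ``the proof follows from the definitions,'' and what you write out -- homotopy invariance of the fundamental groupoid under the strong deformation retraction $\rho$ (with the identification $u\leftrightarrow b_u$ on basepoints), the matching of the generating sets $S(u)$ and $R(u)$ (so $\rho_{\#}(H)=K$, up to the harmless orientation ambiguity of the loop around $p$, which disappears after taking squares and subgroups), and the first isomorphism theorem for groupoids from Section~\ref{sec:background} to produce $\overline{\rho_{\#}}$ -- is precisely the unpacking of that remark. No gap; your flagged concern about the leaf $\ell_v$ is resolved exactly as you suspect, since the retracted loop traverses $\ell_v$ only with an immediate backtrack.
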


\section{Main result: uniqueness of kink-free representative curves}\label{sec:main-result}

Let $\surf$ and $E$ be as in Section \ref{sec:equiv-of-fund-groupoids}. For each $u\in E$, define $L(u)$ to be the image, under the projection $\pi_1(\Sigma\setminus\mathbb{P})(u,u)\rightarrow \pi_{1,\punct}^{\operatorname{orb}}(\Sigma,E)(u,u)$,
of the set of homotopy classes of the curves $\gamma_{u,p,c}$ described in the second paragraph of Section \ref{sec:equiv-of-fund-groupoids}. Since $\pi_1(\Sigma\setminus\mathbb{P})(u,u)$ is isomorphic to a torsion-free Fuchsian group having a finite Dirichlet polygon of finite hyperbolic area, we can deduce from e.g. \cite[Theorem 3.5.2, Corollary 4.2.6]{katok1992fuchsian} and \cite[Proposition VI.1.4]{deSaintGervais2016uniformization} that $L(u)=\{f\in \pi_{1,\punct}^{\operatorname{orb}}(\Sigma,E)(u,u)\suchthat f\neq \myid$ and has finite order$\}$, and thus, $L(u)=\{f\in \pi_{1,\punct}^{\operatorname{orb}}(\Sigma,E)(u,u)\suchthat \myid\neq f=f^{-1}\}$.

\begin{theorem}\label{thm:unique-kink-free-representatives}
    Let $\surf$ be a surface with non-empty boundary. 
    \begin{itemize}
    \item Given $u_0,v_0\in E$, there exists exactly one function $$\iota:\pi_{1,\punct}^{\operatorname{orb}}(\Sigma,E)(u_0,v_0)\setminus L(u_0) \rightarrow\pi_1(\Sigma\setminus\punct,E)(u_0,v_0)$$ with the following properties:
    \begin{enumerate}
        \item $\mathfrak{p}\circ\iota=\myid$;
        \item for every $f\in \pi_{1,\punct}^{\operatorname{orb}}(\Sigma,E)(u_0,v_0)\setminus L(u_0)$ there exists a representative curve $\gamma\in \iota(f)$ that has no kinks.
    \end{enumerate}
    \item
    there is exactly one function $$\iota:\pi_{1,\punct}^{\operatorname{orb},\operatorname{free}}(\Sigma,E)/\sim \rightarrow\pi_1^{\operatorname{free}}(\Sigma\setminus\punct,E)/\sim$$ with the following properties:
    \begin{enumerate}
        \item $\mathfrak{p}\circ\iota=\myid$;
        \item for every $f\in \pi_{1,\punct}^{\operatorname{orb},\operatorname{free}}(\Sigma,E)/\sim$ there exists a representative curve $\gamma\in \iota(f)$ that has no kinks;
    \end{enumerate}
    where $\sim$ are the equivalence relations that identify each closed curve with its opposite orientation.
    \end{itemize}
\end{theorem}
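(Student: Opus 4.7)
The plan is to reduce the statement via the isomorphism of Theorem~\ref{thm:comm-diagram-orb-fund-groupoids} to a purely combinatorial problem on the leafy dual graph, and then apply the Diamond Lemma machinery from Section~\ref{sec:kinks-on-graphs}. I would fix a signature-zero ideal triangulation $\tau$ of $\surf$; by Theorem~\ref{thm:comm-diagram-orb-fund-groupoids} it suffices to construct, uniquely, the analogous function $\iota$ between the corresponding (orbifold) fundamental groupoids of $\leafyG(\tau)$.

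To construct $\iota(f)$, the idea is to pick any lift $\tilde f \in \pi_1(\leafyG(\tau),E)(u_0,v_0)$ of $f$, write it in standard form $\phi(\tilde f)$, and iteratively resolve its kinks. By Lemma~\ref{lemma:kink-resolution-makes-multiplicity-drop} the process terminates, and by Theorem~\ref{thm:global-confluence-of-kink-resolutions} the resulting kink-free backtrack-free walk $w_0$ is independent of the order of resolutions. Then $\iota(f)$ would be defined as the morphism in $\pi_1(\leafyG(\tau),E)(u_0,v_0)$ represented by $w_0$; a kink-free \emph{curve} representative in $\Sigma\setminus\punct$ would be produced by realizing $w_0$ via Theorem~\ref{thm:every-curve-is-homotopic-to-an-adapted-curve} as a curve adapted to the ribbon structure of $\ribsurf{\leafyG(\tau)}$ and then embedding it via $\iota_\tau$ into $\Sigma$, and its kink-freeness is triangulation-invariant by Proposition~\ref{prop:having-kinks-is-independent-of-triangulation}.

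The main step is to verify that $w_0$ depends only on $f$ and not on the chosen lift $\tilde f$; this will then also yield uniqueness of $\iota$. Both reduce to the following key claim: two kink-free backtrack-free walks representing the same class in $\pi_{1,\punct}^{\operatorname{orb}}(\leafyG(\tau),E)$ must be equal (respectively, rotationally equivalent up to reversal in the free-homotopy setting). One direction is immediate from the definition of resolution: replacing a kink by its resolution exchanges the corresponding subword for an orbifold-equivalent one, because the core of a kink is, up to conjugation, a generator of $K(u_0)$ in the sense of Definition~\ref{def:orb-fund-groupoid-of-leafyG}. The hard part will be the converse, where I would argue that multiplying a backtrack-free walk $w$ by a generator $\gamma_{u,v,c}^{\pm 2}$ of $K(u_0)$ produces, after cancellation to pass to the standard form, a subword $(\eta_{v,i}\eta_{v,k})^2$ bordered by matching incoming/outgoing edges, namely a kink of multiplicity two whose resolution recovers $\phi(w)$. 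By induction on the length of an expression of $k\in K(u_0)$ as a product of generators, any two orbifold-equivalent backtrack-free walks will be connected by a finite sequence of kink-insertion/resolution moves; combined with termination and confluence, this yields that each orbifold class contains a unique kink-free backtrack-free walk.

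The main obstacle will be the free-homotopy analogue of this uniqueness claim, where opposite orientations can arise: this is precisely the exceptional behaviour already exhibited at the end of the proof of Proposition~\ref{prop:local-confluence-of-kink-resolutions} (Figures~\ref{Fig:interferingKinksClosedCurve} and~\ref{Fig:generalAsymmetricKinkClosedCurve}), and the quotient by $\sim$ in the statement absorbs it. Finally, the need to exclude $L(u_0)$ in the non-closed part will follow from the fact that a torsion element of the orbifold group, represented by a puncture-encircling loop, admits two distinct kink-free lifts to $\pi_1(\Sigma\setminus\punct,E)(u_0,u_0)$, corresponding to the two directions of winding around the puncture; uniqueness of $\iota$ would visibly fail on $L(u_0)$, whereas the free-homotopy version tolerates this ambiguity through $\sim$.
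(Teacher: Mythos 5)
Your proposal is correct and follows essentially the same route as the paper's proof: reduce via Theorem~\ref{thm:comm-diagram-orb-fund-groupoids} (together with Proposition~\ref{prop:having-kinks-is-independent-of-triangulation}) to the leafy dual graph, define $\iota$ by resolving the kinks of a lift using termination (Lemma~\ref{lemma:kink-resolution-makes-multiplicity-drop}, Proposition~\ref{prop:contraction-after-partial-resolution-of-kink}) and confluence (Theorem~\ref{thm:global-confluence-of-kink-resolutions}), and obtain well-definedness and uniqueness by showing that multiplying by a generator of $K(u_0)$ creates, after cancellation, a kink whose resolution undoes the multiplication --- which is exactly the content the paper extracts from Lemma~\ref{lemma:key-lemma-for-confluence-of-kink-resolutions} and the description of the generators preceding Definition~\ref{def:orb-fund-groupoid-of-leafyG}. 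The only differences are presentational: you sketch that key cancellation step directly where the paper cites Lemma~\ref{lemma:key-lemma-for-confluence-of-kink-resolutions}, and your explanation of why $L(u_0)$ must be excluded (a torsion class has two distinct kink-free lifts) is made explicit, whereas the paper leaves it implicit.
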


\begin{proof} We prove the first statement, the proof of the second statement is similar.
By Proposition \ref{prop:having-kinks-is-independent-of-triangulation} and Theorem \ref{thm:comm-diagram-orb-fund-groupoids}, it is enough to take a triangulation $\tau$ of signature zero, and prove the statements for $\pi_{1,\punct}^{\operatorname{orb}}(\leafyG(\tau),E)$ and $\pi_1(\leafyG(\tau),E)$ instead of $\pi_{1,\punct}^{\operatorname{orb}}(\Sigma,E)$ and $\pi_1(\Sigma\setminus\punct,E)$.

Take $f\in \pi_1(\leafyG(\tau),E)(u_0,v_0)\setminus \overline{\rho_{\#}}(L(u_0))$, where $\overline{\rho_{\#}}$ is the morphism of groupoids in Theorem \ref{thm:comm-diagram-orb-fund-groupoids}. Assume $f$ is written in standard form. It is clear that applying resolutions of kinks to $f$ does not affect the class of $f$ in $\pi_{1,\punct}^{\operatorname{orb}}(\leafyG(\tau),E)(u_0,v_0)$. Furthermore, by Theorem \ref{thm:global-confluence-of-kink-resolutions}, the element $\iota(f)\in\pi_1(\leafyG(\tau),E)(u_0,v_0)$ obtained after fully resolving all kinks from $f$ is independent of the order in which the kinks are resolved. This produces one function $$\iota:\pi_{1,\punct}^{\operatorname{orb}}(\Sigma,E)(u_0,v_0)\setminus L(u_0)\rightarrow\pi_1(\Sigma\setminus\punct,E)(u_0,v_0)$$
with the two stated properties.

Uniqueness follows also from Theorem \ref{thm:global-confluence-of-kink-resolutions} after realizing that, by Definition \ref{def:orb-fund-groupoid-of-leafyG}, the paragraph preceding it and Lemma \ref{lemma:key-lemma-for-confluence-of-kink-resolutions}, if $g\in\pi_1(\leafyG(\tau),E)(u_0,v_0)$ is a morphism without kinks and such that $\mathfrak{p}(g)=\mathfrak{p}(f)$, then it is possible to obtain $g$ from $f$ by applying a finite sequence of resolutions of kinks.
\end{proof}

\begin{remark}
The functions $\iota$ in the first assertion of Theorem \ref{thm:unique-kink-free-representatives} may fail to constitute a morphism of groupoids, see Figure \ref{Fig:iota_not_groupoid_morphism}.
\begin{figure}[ht]
                \caption{}\label{Fig:iota_not_groupoid_morphism}
                \centering
                \includegraphics[scale=.3]{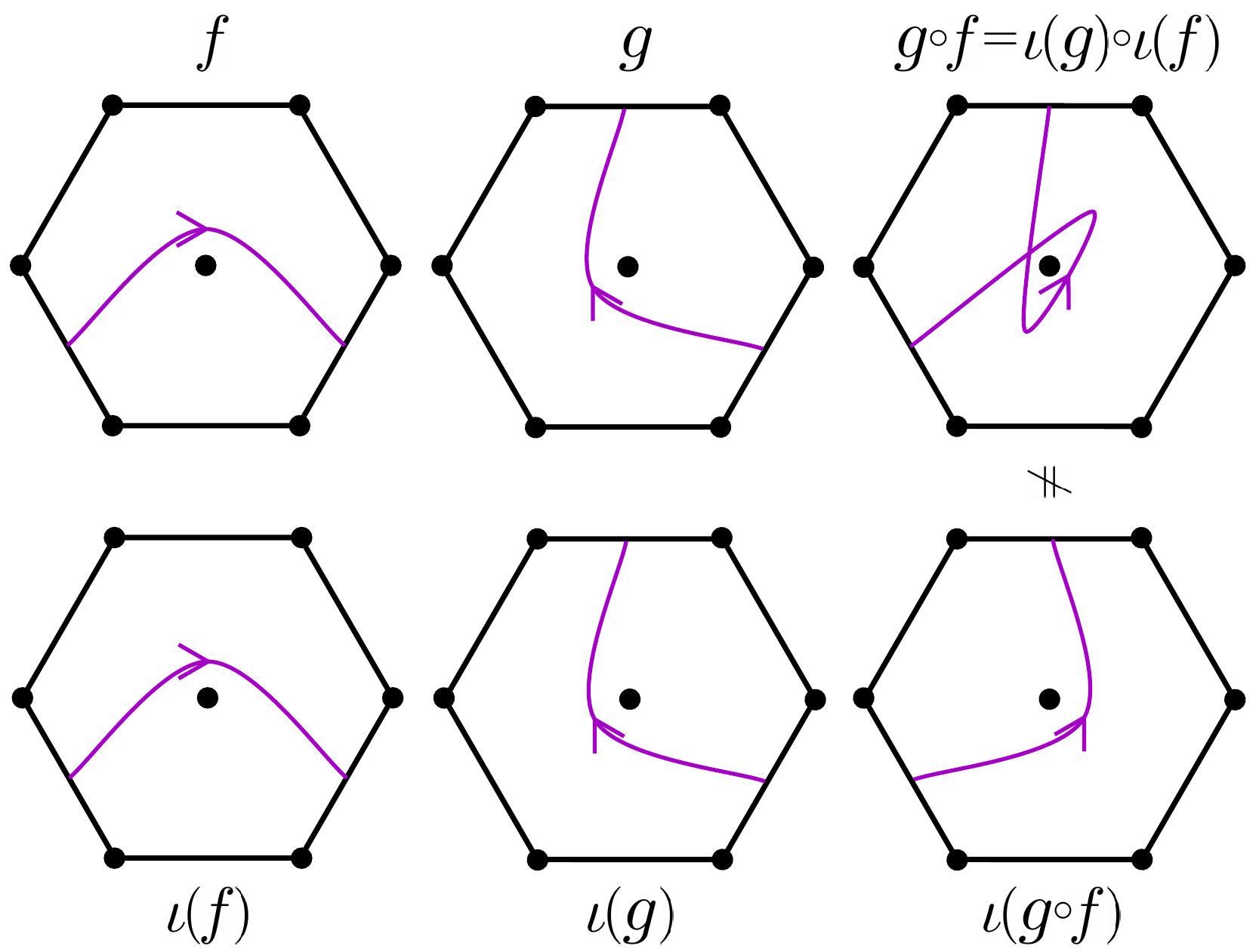}
        \end{figure}
\end{remark}

%
%
%
%

\bibliographystyle{amsalpha-fi-arxlast}
\bibliography{AGT_GeissLabardini_GroupoidsOfGraphs.bib}

\providecommand{\bysame}{\leavevmode\hbox to3em{\hrulefill}\thinspace}
\providecommand{\MR}{\relax\ifhmode\unskip\space\fi MR }
\providecommand{\MRhref}[2]{%
  \href{http://www.ams.org/mathscinet-getitem?mr=#1}{#2}
}
\providecommand{\href}[2]{#2}
\begin{thebibliography}{ABCJP10}

\bibitem[AB22]{amiot2022derivedequivalences}
C.~Amiot and T.~Br\"{u}stle, \emph{Derived equivalences between skew-gentle
  algebras using orbifolds}, Documenta Mathematica \textbf{27} (2022),
  933--982.

\bibitem[ABCJP10]{assem2010gentle}
I.~Assem, T.~Br\"{u}stle, G.~Charbonneau-Jodoin, and P.-G. Plamondon,
  \emph{Gentle algebras arising from surface triangulations}, Algebra Number
  Theory \textbf{4} (2010), no.~2, 201--229.

\bibitem[AG16]{amiot2016derived}
C.~Amiot and Y.~Grimeland, \emph{Derived invariants for surface algebras}, J.
  Pure Appl. Algebra \textbf{220} (2016), no.~9, 3133--3155.

\bibitem[ALFP21]{amiot2021derived}
C.~Amiot, D.~Labardini-Fragoso, and P.-G. Plamondon, \emph{Derived invariants
  for surface cut algebras of global dimension 2 {II}: the punctured case},
  Comm. Algebra \textbf{49} (2021), no.~1, 114--150.

\bibitem[Ami16]{amiot2016the}
C.~Amiot, \emph{The derived category of surface algebras: the case of the torus
  with one boundary component}, Algebras and Representation Theory \textbf{19}
  (2016), 1059--1080.

\bibitem[AP21]{amiot2021the}
C.~Amiot and P.-G. Plamondon, \emph{The cluster category of a surface with
  punctures via group actions}, Adv. Math. \textbf{389} (2021), Paper No.
  107884, 63.

\bibitem[CG16]{chas2016the}
M.~Chas and S.~Gadgil, \emph{The extended {G}oldman bracket determines
  intersection numbers for surfaces and orbifolds}, Algebr. Geom. Topol.
  \textbf{16} (2016), no.~5, 2813--2838.

\bibitem[dSG16]{deSaintGervais2016uniformization}
H.~P. de~Saint-Gervais, \emph{Uniformization of {R}iemann surfaces.
  {R}evisiting a hundred-year-old theorem}, Heritage of European Mathematics,
  European Mathematical Society (EMS), Z\"{u}rich, 2016, Translated from the
  2010 French original by Robert G. Burns.

\bibitem[FG07]{fock2007dual}
V.~V. Fock and A.~B. Goncharov, \emph{Dual {T}eichm\"{u}ller and lamination
  spaces}, Handbook of {T}eichm\"{u}ller theory. {V}ol. {I}, IRMA Lect. Math.
  Theor. Phys., vol.~11, Eur. Math. Soc., Z\"{u}rich, 2007, pp.~647--684.

\bibitem[FST08]{fomin2008cluster}
S.~Fomin, M.~Shapiro, and D.~Thurston, \emph{Cluster algebras and triangulated
  surfaces. {I}. {C}luster complexes}, Acta Math. \textbf{201} (2008), no.~1,
  83--146.

\bibitem[Gei23]{geiss2023onhomomorphism}
C.~Gei{\ss}, \emph{On homomorphism and generically $\tau$-reduced components
  for skewed-gentle algebras}, 2023, arXiv:2307.10306.

\bibitem[Han22]{hansper2022classification}
U.~Hansper, \emph{Classification of the indecomposable finite dimensional
  modules of clannish algebras}, Thesis (Ph.D.), Fakult\"{a}t f\"{u}r
  Mathematik, Universit\"{a}t Bielefeld. https://doi.org/10.4119/unibi/2962952,
  2022.

\bibitem[HKK17]{haiden2017flat}
F.~Haiden, L.~Katzarkov, and M.~Kontsevich, \emph{Flat surfaces and stability
  structures}, Publ.math.IHES (2017), no.~126, 247--318.

\bibitem[HS85]{hass1985intersections}
J.~Hass and P.~Scott, \emph{Intersections of curves on surfaces}, Israel J.
  Math. \textbf{51} (1985), no.~1-2, 90--120.

\bibitem[Hue80]{huet1980confluent}
G.~Huet, \emph{Confluent reductions: abstract properties and applications to
  term rewriting systems}, J. Assoc. Comput. Mach. \textbf{27} (1980), no.~4,
  797--821.

\bibitem[Kat92]{katok1992fuchsian}
S.~Katok, \emph{Fuchsian groups}, Chicago Lectures in Mathematics, University
  of Chicago Press, Chicago, IL, 1992.

\bibitem[LF09]{labardini2009quivers}
D.~Labardini-Fragoso, \emph{Quivers with potentials associated to triangulated
  surfaces}, Proc. Lond. Math. Soc. (3) \textbf{98} (2009), no.~3, 797--839.

\bibitem[LFSV22]{labardini2022derived}
D.~Labardini-Fragoso, S.~Schroll, and Y.~Valdivieso, \emph{Derived categories
  of skew-gentle algebras and orbifolds}, Glasgow Mathematical Journal
  \textbf{64} (2022), no.~3, 649–674.

\bibitem[LP20]{lekili2020derived}
Y.~Lekili and A.~Polishchuk, \emph{Derived equivalences of gentle algebras via
  fukaya categories}, Mathematische Annalen (2020), no.~376, 187--225.

\bibitem[May99]{may1999aconcise}
J.~P. May, \emph{A concise course in algebraic topology}, Chicago Lectures in
  Mathematics, University of Chicago Press, Chicago, IL, 1999.

\bibitem[New42]{newman1942on}
M.~H.~A. Newman, \emph{On theories with a combinatorial definition of
  ``equivalence.''}, Ann. of Math. (2) \textbf{43} (1942), 223--243.

\bibitem[OPS18]{opper2018geometric}
S.~Opper, P.-G. Plamondon, and S.~Schroll, \emph{A geometric model for the
  derived category of gentle algebras}, 2018, arXiv:1801.09659.

\bibitem[Sap14]{sapir2014combinatorial}
M.~V. Sapir, \emph{Combinatorial algebra: syntax and semantics}, Springer
  Monographs in Mathematics, Springer, Cham, 2014, With contributions by Victor
  S. Guba and Mikhail V. Volkov.

\end{thebibliography}

\end{document}